\newtheorem{theorem}{Theorem}[section]
\newtheorem{corollary}[theorem]{Corollary}
\newtheorem{lemma}[theorem]{Lemma}
\newtheorem{proposition}[theorem]{Proposition}
\theoremstyle{definition}
\newtheorem{definition}[theorem]{Definition}
\newtheorem{remark}[theorem]{Remark}
\newcommand{\E}{\mathcal{E}}
\newcommand{\R}{\mathbb{R}}
\newcommand{\N}{\mathbb{N}}
\newcommand{\C}{\mathbb{C}}
\newcommand{\Km}{\mathcal{K}}
\newcommand{\Dz}{\dot{\mathbf{H}}^{1}(\R^{d})}
\newcommand{\psib}{\mbox{\boldmath$\psi$}}
\newcommand{\phib}{\mbox{\boldmath$\phi$}}
\newcommand{\ub}{\mathbf{u}}
\newcommand{\uck}{u_{Ck}}
\newcommand{\vb}{\mathbf{v}}
\newcommand{\zb}{\mathbf{z}}
\newcommand{\wb}{\mathbf{w}}
\newcommand{\Lb}{\mathbf{L}}
\newcommand{\Wb}{\mathbf{W}}
\newcommand{\Sb}{\mathbf{S}}
\newcommand{\Xb}{\mathbf{X}}
\newcommand{\kl}{k=1,\ldots,l}
\newcommand{\sumk}{\sum_{k=1}^l}
\newcommand{\mR}{\mathcal{R}}
\newcommand{\RE}{\mathrm{Re}}
\newcommand{\IM}{\mathrm{Im}}
\numberwithin{equation}{section}
\begin{document}
	\title[NLS SYSTEM WITH $p$-TYPE NONLINEARITIES]{THE FOCUSING ENERGY-CRITICAL NONLINEAR SCHRÖDINGER SYSTEM WITH POWER-TYPE GROWTH NONLINEARITIES IN THE RADIAL CASE}
	
		\author{LUIZ GUSTAVO FARAH}\address{ICEx, Universidade Federal de Minas Gerais, Av. Antônio Carlos, 6627, Caixa Postal 702, 30123-970 Belo Horizonte, MG, Brazil}
	\email{lgfarah@gmail.com}
	
	\author{MAICON HESPANHA}\address{ICEx, Universidade Federal de Minas Gerais, Av. Antônio Carlos, 6627, Caixa Postal 702, 30123-970 Belo Horizonte, MG, Brazil}
	\email{mshespanha@gmail.com}

\subjclass[2020]{35Q44, 35P25, 35B44}
\keywords{Energy-critical; Nonlinear Schrödinger Equation; Ground State Solutions; Blow-up; Scattering }
	
	\begin{abstract} 
This work is concerned with a coupled system of focusing nonlinear Schrödinger equations involving general power-type nonlinearities in the energy-critical setting for dimensions $3 \leq d \leq 5$ in the radial setting. Our aim is to demonstrate the scattering versus blow-up dichotomy in the radial case. To achieve this, we first prove the existence of ground state solutions using the concentration-compactness method combined with variational techniques. We then establish finite-time blow-up through a convexity argument and prove scattering by applying the concentration-compactness and rigidity method.
	\end{abstract}
	
	\maketitle
	
	\section{Introduction}
	
	This paper investigates the Cauchy problem for the focusing energy-critical $l$-component nonlinear Schrödinger system
	\begin{equation}\label{SISTQ}
		\left\{\begin{array}{ll}
			i\alpha_k\partial_t u_k +\gamma_k \Delta u_k -\beta_k u_k=-f_k(\mathbf{u}), \quad  k=1,\ldots,l,\\
			\mathbf{u}(0,x)=\mathbf{u}_0(x),
		\end{array}\right.
	\end{equation}
	where $\mathbf{u}=(u_1,...,u_l)$ with $u_1,...,u_l$  complex-valued functions on the variables $(t,x)\in\mathbb{R}\times\mathbb{R}^d$, $d\geq3$, $\Delta$ is the Laplacian operator,  $\alpha_k,\gamma_k>0$, $\beta_k\geq 0$ are real constants. The nonlinearities $f_k: \mathbb{C}^l \to \mathbb{C}$ exhibit $p$-type growth and satisfy certain assumptions, which will be detailed later.
	
We are interested in the energy-critical case, where the ${\dot{H}}^{1}(\R^d)$-norm of the solutions to \eqref{SISTQ} are invariant under the scaling symmetry
	\begin{equation*}\label{scaling}
		g(t,x)\mapsto g_\lambda(t,x):=\lambda^{\frac{d-2}{2}}g(\lambda^2t,\lambda x),\quad (t,x)\in\mathbb{R}\times\mathbb{R}^d.
	\end{equation*}
A direct calculation shows that $p = \frac{d+2}{d-2}$, where $p$ denotes the power of the nonlinearity $f_k$, $k=1,...,l$. In particular, for dimensions $d=3$, $d=4$ and $d=6$, the nonlinearities are quintic, cubic and quadratic, respectively.
	
	In recent years, there has been growing interest in studying the dynamics of nonlinear coupled Schrödinger systems with nonlinearities exhibiting power-like growth. For instance, in the specific case where $l =p= 2$, $f_1(u_1,u_2)=2\overline{u}_1u_2$ and $f_2(u_1,u_2)={u}^2_1$, the system is given by
	\begin{equation}\label{system1J}
		\begin{cases}
			\displaystyle i\partial_{t}u_1+\Delta u_1=-2\overline{u}_1u_2,\\
			\displaystyle i\partial_{t}u_2+\kappa\Delta u_2=- u^{2}_1,
		\end{cases}
	\end{equation}
where $\kappa>0$.  This system arises as the non-relativistic limit of a nonlinear Klein–Gordon system, see \cite{Hayashi2013onsystem} for further details.


The work \cite{Hayashi2013onsystem} also addresses the local and global well-posedness and the existence of \textit{ground state} solutions for \eqref{system1J}. Following this, several results concerning the dynamics of solutions to \eqref{system1J} have been established, including existence of bound and ground states in dimensions $d=2,3$ \cite{zhang2018stable}; scattering in dimension $d=4$ \cite{inui2019scattering}; global versus blow-up dichotomy in dimension $d=5$ \cite{NoPa}; existence and stability of standing waves in dimensions $1\leq d\leq 3$ and finite-time blow-up solutions in dimension $d=4$ \cite{dinh2020existence}; strong instability of standing waves in dimension $d=5$ \cite{dinh2020instability}; finite-time blow-up in dimensions $d=5,6$ and blow-up or grow-up in dimension $d=4$ \cite{inui}; and existence of traveling wave solutions in dimensions $1\leq d\leq 5$ \cite{Fukaya2022travel}.

The study of nonlinear Schrödinger systems with quadratic interactions is not limited to two-wave interactions. In particular, models exhibiting quadratic-type nonlinear growth for the case $l=3$ have been investigated in \cite{pastor}, \cite{uriya2016finalstate}, and \cite{meng2021scattering}. The energy-critical case, where $d=6$, was addressed in \cite{inui}, where, among other results, finite-time blow-up was shown under radial symmetry assumptions. Additionally, \cite{gao} established a complete classification of the scattering versus blow-up dichotomy based on whether the initial kinetic energy is below or above that of the ground state.
	
	Another example, where $l=2$ and $p=3$, is the following
	\begin{equation}\label{SISTcubic}
		\begin{cases}
			i\partial_tu_1+\Delta u_1-u_1+\left(\dfrac{1}{9}|u_1|^2+2|u_2|^2\right)u_1+\dfrac{1}{3}\bar{u_1}^2u_2=0,\\
			i\sigma \partial_tu_2+\Delta u_2-\mu u_2+(9|u_2|^2+2|u_1|^2)u_2+\dfrac{1}{9}u_1^3=0,\\
		\end{cases}
	\end{equation}
where $\sigma,\mu$ are positive real constants. This model describes the interaction between an optical beam and its third harmonic in a material with Kerr-type nonlinear response, for a more details we refer the reader to \cite{SBK}.

From a mathematical point of view, system \eqref{SISTcubic} has been studied in various settings. In \cite{AP} and \cite{pastor4}, the authors established local and global well-posedness in Sobolev spaces of positive index for periodic initial data in one spatial dimension. They also proved the existence of periodic standing waves of dnoidal and cnoidal type, as well as their spectral and nonlinear stability in the energy space. For the multidimensional case in $\R^d$, with $1\leq d\leq3$, several results concerning the dynamics of the solutions to system \eqref{SISTcubic} were established in \cite{oliveira}. These include the existence and stability of ground state solutions, local and global well-posedness in the energy space $H^1(\mathbb{R}^d)\times H^1(\mathbb{R}^d)$ and finite-time blow-up criteria. Moreover, the energy-critical case $d=4$ was recently studied in \cite{HP}, where it was shown that radially symmetric solutions with initial energy below that of the ground state, but with kinetic energy exceeding that of the ground state, must blow up in finite time. Additional results on system \eqref{SISTcubic} can be found in \cite{stefanov}, \cite{colin}, \cite{zhang}, and \cite{likai}.
	
Motivated by \cite{NoPa}-\cite{pastor3}, we aim to provide sufficient conditions on the interaction terms $f_k$ to study the dynamics of energy-critical systems, $p=\frac{d+2}{d-2}$, in general dimension $d\geq3$. This research extends the subcritical framework, $\frac{d+4}{d}\leq p<\frac{d+2}{d-2}$ , established in \cite{noguerap}. To this end, we will impose the following assumptions on the nonlinear terms
	
	\newtheorem{thmx}{}
	\renewcommand\thethmx{(H1)}
	\begin{thmx}\label{H1}
		The nonlinearities satisfy
		\begin{align*}
			f_{k}(\mathbf{0})=0, \qquad  k=1,\ldots,l.
		\end{align*}
	\end{thmx}
	
	\renewcommand\thethmx{(H2)}
	\begin{thmx}\label{H2}
		We have, for any $\mathbf{z},\mathbf{z}'\in \C^{l}$, that
		\begin{equation*}
			\begin{split}
				\left|\frac{\partial }{\partial z_{m}}[f_{k}(\mathbf{z})-f_{k}(\mathbf{z}')]\right|+ \left|\frac{\partial }{\partial \overline{z}_{m}}[f_{k}(\mathbf{z})-f_{k}(\mathbf{z}')]\right|&\leq C \sum_{j=1}^{l}|z_{j}-z_{j}'|^{\frac{4}{d-2}},\qquad k,m=1,\ldots,l.
			\end{split}
		\end{equation*}
	\end{thmx}

	\renewcommand\thethmx{(H3)}
	\begin{thmx}\label{H3}
		There exists a function $F:\C^{l}\to \C$,  such that
		\begin{equation*}
			f_{k}(\mathbf{z})=\frac{\partial F}{\partial \overline{z}_{k}}(\mathbf{z})+\overline{\frac{\partial F }{\partial z_{k}}}(\mathbf{z}),\qquad k=1,\ldots,l.
		\end{equation*}
	\end{thmx}
	
	\renewcommand\thethmx{(H4)}
	\begin{thmx}\label{H4}
		There exists positive constants $\sigma_1,...,\sigma_l$ such that for any $\zb\in\C^l$
		$$
		Im\sum_{k=1}^l\sigma_kf_k(\zb)\widebar{z}_k=0,
		$$
		
	\end{thmx}
	
	\renewcommand\thethmx{(H5)}
	\begin{thmx}\label{H5}
		The function $F$ is homogeneous of degree $\frac{2d}{d-2}$, that is, for any $\mathbf{z}\in \mathbb{C}^{l}$ and $\lambda >0$,
		\begin{equation*}
			F(\lambda \mathbf{z})=\lambda^{\frac{2d}{d-2}}F(\mathbf{z}).
		\end{equation*}
	\end{thmx}

	\renewcommand\thethmx{(H6)}
	\begin{thmx}\label{H6}
		The function $F$ satisfies the following inequality
		\begin{equation*}
			\left|\mathrm{Re}\int_{\R^{d}} F(\ub)\;dx\right|\leq \int_{\R^{d}} F(\!\!\big\bracevert\!\! \mathbf{u}\!\!\big\bracevert\!\!)\;dx.
		\end{equation*}
	\end{thmx}

	\renewcommand\thethmx{(H7)}
	\begin{thmx}\label{H7}
		The function $F$ is real valued on $\R^l$, that is, if $(y_{1},\ldots,y_{l})\in \R^{l}$ then
		\begin{equation*}
			F(y_{1},\ldots,y_{l})\in \R.
		\end{equation*}
		Moreover, all the functions	$f_k$ are non-negative on the positive cone in $\mathbb{R}^l$, that is, for $y_i\geq0$, $i=1,\ldots,l$,
		\begin{equation*}
			f_{k}(y_{1},\ldots,y_{l})\geq 0, \qquad  k=1,\ldots,l.
		\end{equation*}	
	\end{thmx}

	\renewcommand\thethmx{(H8)}
	\begin{thmx}\label{H8}
		The function $F$ can be written as the sum $F=F_1+\cdots+F_m$, where $F_s$, $s=1,\ldots, m$ is super-modular on $\R^d_+$, $1\leq d\leq l$ and vanishes on hyperplanes, that is, for any $i,j\in\{1,\ldots,d\}$, $i\neq j$ and $k,h>0$, we have
		\begin{equation*}
			F_s(y+he_i+ke_j)+F_s(y)\geq F_s(y+he_i)+F_s(y+ke_j), \qquad y\in \R^d_+,
		\end{equation*}
		and $F_s(y_1,\ldots,y_d)=0$ if $y_j=0$ for some $j\in\{1,\ldots,d\}$.
	\end{thmx}
	
As we will show, assumptions \ref{H1}–\ref{H2} suffice to prove local well-posedness in the energy space (see Theorem \ref{LWP} below). Moreover, assumptions \ref{H3}-\ref{H4} guarantee that solutions to system \eqref{SISTQ} conserve both mass and energy, which are 
given, respectively, by
	\begin{equation}\label{qmassa}
		M(\mathbf{u}(t)):=\sum_{k=1}^l\frac{\sigma_k\alpha_k}{2}\Vert u_k(t)\Vert_{L^2}^2
	\end{equation}
	and
	\begin{equation}\label{EQ}
		E(\mathbf{u}(t)):=K(\ub)+L(\ub)-2P(\ub),
	\end{equation}
	where $K$, $L$ and $P$ are defined as
	\begin{equation}\label{KLP}
		K(\mathbf{u})= \sum_{k=1}^l\gamma_k\Vert\nabla u_k\Vert_{L^2}^2,\quad L(\ub)=\sumk \beta_k\Vert u_k\Vert_{L^2}^2 \quad \hbox{and}\quad P(\mathbf{u})=\hbox{Re}\int_{\mathbb{R}^d} F(\mathbf{u}(t))dx.
	\end{equation}
The functionals $K$ and $P$ are referred to as the kinetic and potential energies, respectively.
	
To study the asymptotic behavior of the global solutions to the energy-critical system \eqref{SISTQ}, under assumptions \ref{H1}-\ref{H8}, we well also impose a mass resonance condition. More precisely, we say that the system satisfies the mass-resonance condition if
	\begin{equation}\label{RC}
		\mathrm{Im}\sum_{k=1}^{l}\frac{\alpha_{k}}{2\gamma_{k}}f_{k}(\zb)\overline{z}_{k}=0, \quad \zb\in \mathbb{C}^{l},
	\end{equation}
where $\alpha_k$ and $\gamma_k$ are the parameters appearing in \eqref{SISTQ}.

This condition simplifies the application of localized virial-type arguments. Indeed, let $\phi\in C_0^\infty(\R^d)$ and define
\begin{equation}\label{Vt}
		V(t)=\int \phi(x)\left(\sum_{k=1}^l\frac{\alpha_{k}^2}{\gamma_k}|u_k|^2\right)dx.
	\end{equation}
Taking the time derivative, we obtain
	\begin{equation}\label{V'}
		V'(t)=\sum_{k=1}^l \IM\int\nabla\phi\cdot\nabla u_k\widebar{u}_kdx-4\int\phi(x)\IM\sum_{k=1}^l\frac{\alpha_{k}}{2\gamma_k}f_k(\ub)\widebar{u}_kdx,
	\end{equation}
where the last term vanishes due to the mass-resonance condition \eqref{RC}.
	
	\begin{remark}
		Note that systems \eqref{system1J} and \eqref{SISTcubic} satisfy assumptions \ref{H1}-\ref{H8}. Indeed, their associated potential functions are given respectively by $F(z_1,z_2)=\widebar{z}_1^2z_2$ and $F(z_1,z_2)=\frac{1}{36}|z_1|^4+\frac{9}{4}|z_2|^4+|z_1|^2|z_2|^2+\frac{1}{9}\widebar{z}_1^3z_2$. Moreover, using the mass-resonance condition \eqref{RC}, one can verify that both systems satisfy this condition if, and only if, $\kappa=\frac{1}{2}$ and $\sigma=3$, respectively.
	\end{remark}
Our main goal is to establish a classification of the scattering versus blow-up dichotomy in the radial setting.  Before presenting our results, we first recall the known results for the Cauchy problem associated with the single nonlinear Schrödinger equation		
	\begin{equation}\label{NLS}
		\left\{
		\begin{array}{ll}
			i\partial_t u+\Delta u=\lambda |u|^{\frac{4}{d-2}}u, \quad (t,x)\in\R \times \R^d,\\
			u(0,x)=u_0.
		\end{array}
		\right.
	\end{equation}
The local well-posedness was first established by \cite{CAZW}. Furthermore, to investigate the asymptotic behavior of radial solutions, the authors in the seminal work \cite{KM} developed the concentration-compactness and rigidity method, proving scattering below the energy threshold for radial initial data in dimensions $3\leq d\leq 5$. These results were later extended to the non-radial setting in dimensions $d\geq 5$ \cite{KV}, and more recently in dimension $d=4$ \cite{D}.
	
	Turning back to our problem, we start with the following definition. 
	\begin{definition}[Solution]\label{Solution} A solution to the system \eqref{SISTQ} is a function $\mathbf{u}:I\times\mathbb{R}^d\to \C^{l}$, defined on a non-empty time interval $ I\subset \mathbb{R}$, with $0\in I$, such that $\mathbf{u} \in \mathbf{C}_t^0\dot{\mathbf{H}}_x^1(K\times\mathbb{R}^d)\cap\mathbf{L}_{t,x}^d(K\times\mathbb{R}^d)$ for all compact interval $K\subset I$, and satisfies the Duhamel formula
		\begin{equation}\label{defsol}
			\left\{\begin{array}{ll}
				u_k(t)=U_k(t)u_{k0}+i\int_0^tU_k(t-s)\frac{1}{\alpha_k}f_k(\mathbf{u})ds,\\
				\mathbf{u}(0)=\mathbf{u}_0=(u_{10},...,u_{l0}),
			\end{array}\right.
		\end{equation}
where $U_k(t)$ denotes the corresponding unitary group defined by $U_k(t)=e^{it\frac{1}{\alpha_k}(\gamma_k\Delta-\beta_k)}$, $k=1,...,l$, and $t\in I$. The interval $I$ is referred to as the lifespan of the solution $\mathbf{u}$. We say that $\mathbf{u}$ is a maximal-lifespan solution if the solution cannot be extended to an interval $J\supset I$ strictly larger than $I$. The solution is said to be global if $I=\mathbb{R}$. 
	\end{definition}


To state our local well-posed result we introduce the norms $S(I)$ and $X(I)$ for an interval $I$ as follows
		\begin{equation}\label{norms2}
			\Vert \cdot \Vert_{S(I)}:=\Vert \cdot \Vert_{L_I^{\frac{2(d+2)}{d-2}}L_x^{\frac{2(d+2)}{d-2}}}\,\hbox{ and }\	\Vert \cdot \Vert_{X(I)}:=\Vert \cdot \Vert_{L_I^{\frac{2(d+2)}{d-2}}L_x^{\frac{2d(d+2)}{d^2+4}}}.
		\end{equation}
Note that $(\frac{2(d+2)}{d-2}, \frac{2d(d+2)}{d^2+4})$ is an admissible pair as defined in Definition \ref{Adpair} below. Furthermore, by the Sobolev embedding (see Lemma \ref{sobolev} below), it follows that 
		\begin{equation}\label{SXnorms}
		\Vert \mathbf{v}\Vert_{\Sb(I)}\lesssim \Vert \nabla \mathbf{v}\Vert_{\Xb (I)}.
		\end{equation}
By following the strategy for the energy critical Schr\"odinger equation \eqref{NLS} (see, for example, \cite[Theorem 1.1]{CAZW} or \cite[Theorem 2.5]{KM}, and for a textbook exposition, \cite[Chapter 4]{cazenave} or \cite[Chapter 5]{linares}), we can establish the following theorem.		
	\begin{theorem}\label{LWP}
		Let $3 \leq d \leq 5$ and suppose that assumptions \ref{H1}-\ref{H2} are satisfied. For $\mathbf{u}_0 \in \Dz$, there exists $T = T(\ub_0) > 0$ such that for the interval $I = [-T(\ub_0), T(\ub_0)]$, there is a unique solution $\ub \in C(I; \dot{\mathbf{H}}^1(\R^d))$ to the Cauchy problem \eqref{SISTQ}, with $\ub \in \mathbf{S}(I)$ and $\nabla \ub \in \mathbf{X}(I)$. 
	\end{theorem}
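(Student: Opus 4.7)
The plan is to apply the Banach contraction principle to the Duhamel map
\begin{equation*}
\Phi(\ub)_k(t) = U_k(t)u_{k0} + \frac{i}{\alpha_k}\int_0^t U_k(t-s) f_k(\ub(s))\,ds, \qquad k=1,\ldots,l,
\end{equation*}
on a suitable complete metric space. I would work on $I = [-T,T]$ and consider the closed ball
\begin{equation*}
B = \bigl\{\ub \in C(I;\Dz)\cap \Sb(I) : \nabla \ub \in \Xb(I),\ \|\nabla \ub\|_{\Xb(I)} \leq 2C_0\|\nabla \ub_0\|_{L^2},\ \|\ub\|_{\Sb(I)} \leq 2\eta\bigr\}
\end{equation*}
endowed with the metric $d(\ub,\vb)=\|\ub-\vb\|_{\Sb(I)}$, where $\eta>0$ is a small universal constant and $T=T(\ub_0)>0$ is to be chosen at the end.

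First, the Strichartz estimates for each group $U_k(t)=e^{it\alpha_k^{-1}(\gamma_k\Delta-\beta_k)}$ (which, since $\alpha_k,\gamma_k>0$ and the $e^{-it\beta_k/\alpha_k}$ factor is a unit-modulus multiplier, are identical to those for the classical Schr\"odinger group up to harmless constants) give
\begin{equation*}
\|\nabla \Phi(\ub)\|_{\Xb(I)} \lesssim \|\nabla \ub_0\|_{L^2} + \sum_{k=1}^l \|\nabla f_k(\ub)\|_{\Xb'(I)},
\end{equation*}
where $\Xb'(I)$ denotes the dual Strichartz space of $\Xb(I)$. Combining \ref{H1} and \ref{H2} yields both the pointwise bound $|f_k(\mathbf{z})|\lesssim |\mathbf{z}|^{(d+2)/(d-2)}$ and, via the chain rule, $|\nabla f_k(\ub)|\lesssim |\ub|^{4/(d-2)}|\nabla \ub|$. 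A H\"older inequality with the exponents dictated by energy criticality then gives
\begin{equation*}
\|\nabla f_k(\ub)\|_{\Xb'(I)} \lesssim \|\ub\|_{\Sb(I)}^{4/(d-2)}\|\nabla \ub\|_{\Xb(I)},
\end{equation*}
so $\Phi$ maps $B$ into itself provided $\eta$ is small enough depending only on the Strichartz constants. For the contraction property I would use the Lipschitz consequence of \ref{H2},
\begin{equation*}
|f_k(\ub)-f_k(\vb)| \lesssim \bigl(|\ub|^{4/(d-2)}+|\vb|^{4/(d-2)}\bigr)|\ub-\vb|,
\end{equation*}
together with the analogous Strichartz--H\"older estimate in the $\Sb(I)$-norm alone, which produces a gain $\eta^{4/(d-2)}$ and makes $\Phi$ a strict contraction on $(B,d)$ for $\eta$ sufficiently small. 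Uniqueness in the full class $C(I;\Dz)\cap \Sb(I)$ follows from a standard continuity/persistence-of-regularity argument.

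The main obstacle, typical of the energy-critical setting, is that the smallness constant $\eta$ cannot be allowed to depend on $\|\nabla \ub_0\|_{L^2}$, so one cannot simply shrink $T$ to extract a favorable factor in the H\"older estimate. The resolution is the classical observation that the global Strichartz estimate gives $U_k(t)u_{k0}\in L^{2(d+2)/(d-2)}_{t,x}(\R\times\R^d)$, hence by absolute continuity of the Lebesgue integral
\begin{equation*}
\|U_k(t)u_{k0}\|_{\Sb([-T,T])} \longrightarrow 0 \quad \text{as } T\to 0^+.
\end{equation*}
Selecting $T=T(\ub_0)>0$ small enough so that $\sum_{k=1}^l \|U_k(t)u_{k0}\|_{\Sb(I)}\leq \eta$ completes the argument and produces the desired local solution, the remaining verifications being direct adaptations of the single-equation scheme of \cite[Theorem 2.5]{KM}.
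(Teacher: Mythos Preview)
Your proposal is correct and follows precisely the approach the paper adopts: the paper does not give an independent proof of Theorem~\ref{LWP} but refers to the standard contraction-mapping scheme for the energy-critical NLS from \cite{CAZW} and \cite[Theorem~2.5]{KM}, which is exactly what you outline (Duhamel map on a ball controlled in $\Sb(I)$ and $\nabla\Xb(I)$, nonlinear estimates from \ref{H1}--\ref{H2}, and the key observation that $\|U_k(t)u_{k0}\|_{S([-T,T])}\to 0$ as $T\to 0^+$). The only minor imprecision is that the contraction in the $\Sb(I)$-metric alone does not close directly (the gradient of the difference appears when estimating $\nabla[f_k(\ub)-f_k(\vb)]$), but this is the standard technicality handled in \cite[Theorem~2.5]{KM} by also tracking $\|\nabla(\ub-\vb)\|_{\Xb(I)}$ or using the admissible $\Xb(I)$-metric, which your closing reference covers.
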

	
	It is well known that the ground state solution to the single nonlinear Schrödinger equation is closely linked to the sharp constant in the critical Sobolev inequality (see, for example, \cite{T}). Our first goal is to is to investigate the existence of a special solution for system \eqref{SISTQ}. More precisely, a {standing wave} solution for \eqref{SISTQ} is a particular solution of the form
	\begin{equation}\label{SWS}
		u_k(t,x)=e^{i\frac{\sigma_k}{2}\omega t}\psi_k(x),\quad k=1,...,l.,
	\end{equation}
	where $\omega\in\R$ and $\psi_k$ are real-valued functions decaying to zero at infinity. Substituting into \eqref{SISTQ}, we find that $\psi_k$ satisfies the equation
	\begin{equation}\label{SISTe}
		-\gamma_k\Delta\psi_k +b_k\psi_k=f_k(\psib),\quad k=1,\ldots,l,
	\end{equation}
	where $b_k:=\frac{\alpha_k^2}{\gamma_k}\omega+\beta_k\geq 0$. The action function associated with \eqref{SISTe} is
	\begin{equation}\label{action}
		S(\boldsymbol{\psi})=\frac{1}{2}\left[\sum_{k=1}^l \int\gamma_k|\nabla \psi_k|^2+\sum_{k=1}^l \int b_k|\psi_k|^2 \right]-\int F(\psib)dx.
	\end{equation}
	A ground state solution for \eqref{SIST3} is a nontrivial critical point of the functional $S$ that also minimizes it. According to \cite[Lemma 4.3]{noguerap}, nontrivial solutions to \eqref{SISTe} exist only if $\omega = 0$ and $\beta_k = 0$ for $k=1,\ldots,l$ (and thus $b_k = 0$). Consequently, equation \eqref{SISTe} simplify to
	\begin{equation}\label{SIST3}
		-\gamma_k\Delta \psi_k=f_k(\boldsymbol{\psi}), \quad k=1,\ldots,l,
	\end{equation}
with the corresponding action functional expressed as
	\begin{equation}\label{action2}
		S(\boldsymbol{\psi})=\frac{1}{2}K(\psib)-P(\psib).
	\end{equation}
The preceding discussion leads to the following definition.
	\begin{definition}
A function $\psib\in\Dz$ is a \textit{weak solution} of \eqref{SIST3} if for any $\mathbf{g}\in\Dz$, 
			\begin{equation}\label{solfraca}
				\gamma_k\int\nabla \psi_k \cdot \nabla g_kdx=\int f_k(\psib)g_kdx,\quad k=1,\ldots,l.
			\end{equation}
Moreover, a weak solution $\psib\in\Dz$ is a \textit{ground state} of \eqref{SIST3} if
			$$
			S(\psib)=\inf\{S(\phib);\,\,\phib\in\mathcal{C}\},
			$$
			where where $\mathcal{C}$ denotes the set of all non-trivial solutions of  \eqref{SIST3}.  The set of all ground state of \eqref{SIST3} will be  denote by $\mathcal{G}$.
	\end{definition}
The existence of this special solution is established in the following theorem.
	\begin{theorem}\label{ESGS}
		The set $\mathcal{G}$ is non-empty, and thus there exists at least one ground state solution $\boldsymbol{\psi}$ for system \eqref{SIST3}.
	\end{theorem}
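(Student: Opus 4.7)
My plan is to realize the set $\mathcal{G}$ via a sharp Sobolev-type minimization problem. On the open cone $\{\ub\in\Dz : P(\ub)>0\}$ define
$$J(\ub):=\frac{K(\ub)^{d/(d-2)}}{P(\ub)},$$
and set $I:=\inf J$. Assumption \ref{H5} (homogeneity of $F$ of degree $\tfrac{2d}{d-2}$) makes $J$ invariant under both the energy-critical rescaling $\ub\mapsto \lambda^{(d-2)/2}\ub(\lambda\,\cdot)$ and the multiplicative scaling $\ub\mapsto\mu\ub$. A Lagrange multiplier argument then shows that any minimizer $\psib$ of $J$ can, after appropriate rescalings, be normalized to solve \eqref{SIST3}. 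Conversely, pairing the $k$-th equation of \eqref{SIST3} with $\psi_k$, summing over $k$, and applying Euler's identity for the homogeneous $F$ (via \ref{H3} and \ref{H5}), every non-trivial solution satisfies the Pohozaev-Nehari identity $K(\psib)=\tfrac{2d}{d-2}P(\psib)$. Consequently, on $\mathcal{C}$ the action reduces to $S(\psib)=\tfrac{1}{d}K(\psib)$, so minimizing $S$ over $\mathcal{C}$ is equivalent to minimizing $J$ on its natural cone; the proof therefore reduces to showing that $I$ is attained.

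The principal difficulty is the non-compactness of the embedding $\Dz\hookrightarrow L^{2d/(d-2)}$, together with the invariance under the non-compact groups of translations and dilations, so minimizing sequences may spread to infinity or concentrate at a point. I would first reduce to a radial non-negative minimizing sequence by symmetrization. Given a minimizing $\{\ub_n\}$, I replace each component by its Schwarz symmetric-decreasing rearrangement, denoted $\ub_n^*$: the Dirichlet energy satisfies $K(\ub_n^*)\le K(\ub_n)$ by the Polya-Szegő inequality combined with Kato's inequality $|\nabla|u||\le|\nabla u|$; by \ref{H6}, $|\mathrm{Re}\int F(\ub_n)|\le \int F(|\ub_n|)$; and by the rearrangement inequality for supermodular functions vanishing on hyperplanes, applied to each summand $F_s$ in the decomposition \ref{H8}, $\int F(|\ub_n|)\le \int F(\ub_n^*)$. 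Hence $J(\ub_n^*)\le J(\ub_n)$, and I may restrict attention to radial, non-negative minimizing sequences.

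Next, I would apply the concentration-compactness principle to such a sequence. After normalizing $K(\ub_n)=1$, the sequence is bounded in $\Dz$; extract a weak limit $\psib^*$. Radial symmetry provides the compact embedding of the radial subspace of $\Dz$ into $L^q_{\mathrm{loc}}(\R^d)$ for $2<q<\tfrac{2d}{d-2}$, which, combined with a scale-fixing rescaling keyed to the concentration scale of the $L^{2d/(d-2)}$ mass (Lions' lemma), rules out vanishing. Dichotomy is excluded by establishing the strict subadditivity $I<I_\theta+I_{1-\theta}$ for any nontrivial splitting, which follows from the super-quadratic homogeneity degree $\tfrac{2d}{d-2}>2$ of $P$ and the strict concavity of $t\mapsto t^{(d-2)/d}$. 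One concludes that, along a suitably rescaled subsequence, $\ub_n\to\psib^*$ strongly in $L^{2d/(d-2)}$, so $\psib^*\ne 0$ realizes $I$; after rescaling, $\psib^*\in\mathcal{G}$.

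I expect the hardest step to be the strict subadditivity inequality that rules out the dichotomy alternative: because $F$ is a general homogeneous coupling of all $l$ components rather than a pure power of a single unknown, the standard scalar argument must be adapted carefully to track the interaction terms and to exploit the supermodular structure \ref{H8} in a genuinely multivariate way. A secondary technical point is ensuring positivity of the Lagrange multiplier, so that the normalization in the first step produces a bona fide solution of \eqref{SIST3}; this relies on the sign information in \ref{H7}.
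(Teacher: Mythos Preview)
Your overall strategy coincides with the paper's: both reduce the existence of a ground state to attainment of the infimum of the Weinstein-type functional $J(\ub)=K(\ub)^{d/(d-2)}/P(\ub)$ on $\{P>0\}$, use the Pohozaev identity $K=\tfrac{2d}{d-2}P$ on $\mathcal{C}$ to pass between minimizing $J$ and minimizing $S$, and then recover a solution of \eqref{SIST3} from a minimizer via a Lagrange multiplier.

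The execution, however, differs. You propose to symmetrize first---using \ref{H6} and Kato to pass to $\big\bracevert\!\ub\!\big\bracevert$, then the supermodular rearrangement inequality afforded by \ref{H8} to pass to radial decreasing rearrangements---and then run concentration-compactness on radial non-negative sequences, where the translation non-compactness has been eliminated and only the dilation non-compactness remains. The paper does \emph{not} symmetrize: it works with general non-negative minimizing sequences, carries both a translation parameter $y_m$ and a dilation parameter $R_m$, and rules out dichotomy via a localized Sobolev-type inequality (Lemma~\ref{corol39}) rather than via your subadditivity argument. Your route is perfectly legitimate and has the pleasant feature of producing a radial ground state directly; the paper's route avoids needing \ref{H8} at this stage and would transfer to settings where symmetrization is unavailable.

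Two small corrections. First, the positivity of the Lagrange multiplier does not come from \ref{H7}; it follows immediately from testing the Euler--Lagrange equation against the minimizer and invoking the Euler identity $\sum_k f_k(\psib)\psi_k=\tfrac{2d}{d-2}F(\psib)$, which is a consequence of \ref{H3} and \ref{H5} alone (this is Lemma~\ref{lemma22}(ii)), together with $K(\psib)>0$ and $P(\psib)>0$. Second, the step you flag as hardest---strict subadditivity $1<\theta^{(d-2)/d}+(1-\theta)^{(d-2)/d}$---is actually routine (strict concavity of $t\mapsto t^{(d-2)/d}$). The genuinely delicate point, which you pass over in the sentence ``one concludes that \ldots\ $\ub_n\to\psib^*$ strongly in $L^{2d/(d-2)}$'', is the limit-case Lions~II analysis: even after symmetrizing and fixing the dilation scale, compactness in the \emph{critical} Lebesgue space is not automatic, and one must still exclude concentration at the origin via a measure-theoretic argument (as the paper does through Lemma~\ref{lema36} and the normalization \eqref{341}). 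Your scale-fixing idea is exactly what makes this work, but the proof sketch should acknowledge that a Lions~II type step is needed here.
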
		

The proof of previous theorem is inspired by the recent work of the second author \cite{HP}. We employ the concentration-compactness principle from \cite{lions2}, constructing a sequence of probability Radon measures and excluding the possibilities of vanishing and dichotomy to establish compactness. As a consequence, we also obtain the optimal constant for some critical Sobolev inequality (see Corollary \ref{corol314} below).
	
Once the existence of ground state solutions is established, we can set the threshold for our dichotomy. We begin by studying the global existence and scattering of radial solutions. In this part, we work with initial data in the homogeneous Sobolev space $\Dz$, which means that we do not have control over the $L^2$ norm. Consequently, as in the analysis of ground state solutions, we set all $\beta_k = 0$ in \eqref{SISTQ} and instead work with the system
	\begin{equation}\label{SISTR}
		\left\{\begin{array}{ll}
			i\alpha_k\partial_t u_k +\gamma_k \Delta u_k=-f_k(\mathbf{u}), \quad k=1,...,l,\\
			\mathbf{u}(0,x)=\mathbf{u}_0(x).
		\end{array}\right.
	\end{equation}
Note that in this case, the energy is given by
\begin{equation}\label{anotherE}
\E(\mathbf{u}(t))=\E(\mathbf{u}_0):=K(\mathbf{u}_0)-2P(\mathbf{u}_0).
\end{equation}
Our goal is to prove that, under the mass-resonance condition \eqref{RC}, if the initial data has both energy and kinetic energy strictly below those of a ground state, then the corresponding solution to \eqref{SISTR} is global and scatters. To this end, we adopt the Kenig-Merle approach introduced in \cite{KM} on the radial setting. Our result is stated as follows.
	\begin{theorem}\label{MAIN}
Let $3 \leq d \leq 5$ and assume that \ref{H1}-\ref{H8} and the mass-resonance condition \eqref{RC} hold. If $\mathbf{u}_0\in \Dz$ is radially symmetric and $\boldsymbol{\psi}$ is any ground state solution satisfying
		\begin{equation}\label{mainE}
			\E(\mathbf{u}_0)<\E(\boldsymbol{\psi})
		\end{equation}
		and
		\begin{equation}\label{main K}
			K(\mathbf{u}_0)<K(\boldsymbol{\psi}),
		\end{equation}
then the corresponding solution $\mathbf{u}$ with initial data $u_0$ at $t=0$ for the system \eqref{SISTR} is global. Moreover, there exists $\mathbf{u}^+,\mathbf{u}^-\in\dot{\mathbf{H}}^1$ such that
	$$
	\lim_{t\rightarrow\pm} \Vert u_k(t)-u_k^{\pm}\Vert_{\dot{\mathbf{H}}^1}=0, \quad k=1,...,l.
	$$
	\end{theorem}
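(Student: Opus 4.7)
The first step is to convert the two scalar inequalities $\E(\ub_0)<\E(\psib)$ and $K(\ub_0)<K(\psib)$ into a uniform coercivity estimate along the flow. A Pohozaev-type identity for ground states of \eqref{SIST3} (which follows from the homogeneity \ref{H5}) gives $K(\psib)=2P(\psib)$, and hence $\E(\psib)=\tfrac{1}{d}K(\psib)$. Combining this with the sharp Sobolev inequality from Corollary \ref{corol314} and the conservation of $\E$ given by \eqref{anotherE}, a continuity-in-time bootstrap shows that there exists $\delta=\delta(\ub_0)>0$ with $K(\ub(t))\leq (1-\delta)K(\psib)$ on the whole maximal interval, and
\[
K(\ub(t))-2P(\ub(t))\geq \eta\, K(\ub(t))
\]
for a uniform $\eta>0$. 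Together with the blow-up alternative coming from Theorem \ref{LWP}, this already yields global existence in $\Dz$ and reduces the theorem to proving scattering.

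\textbf{Reduction to a critical element.} Set
\[
\E_c:=\sup\bigl\{E\geq 0\,:\,\text{every radial }\ub_0\in\Dz\text{ with }\E(\ub_0)<E,\ K(\ub_0)<K(\psib)\text{ scatters}\bigr\}.
\]
A standard small-data argument based on \ref{H2}, Strichartz estimates for each $U_k(t)$, and the embedding \eqref{SXnorms} gives $\E_c>0$. Assuming, towards a contradiction, that $\E_c<\E(\psib)$, we follow the Kenig-Merle scheme \cite{KM}: develop a radial linear profile decomposition for the vector-valued free propagator $(U_1(t),\dots,U_l(t))$; solve each profile under the nonlinear flow \eqref{SISTR}; and apply a long-time perturbation lemma (again using \ref{H1}-\ref{H2} and the norm equivalence \eqref{SXnorms}) to reduce to a single nontrivial profile. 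This produces a radial critical element $\ub_c$, global in time, non-scattering, and with orbit $\{\ub_c(t)\}_{t\in\R}$ precompact in $\Dz$ modulo a scaling parameter $\lambda(t)>0$; a classical argument in the radial energy-critical setting rules out $\lambda(t)\to 0$, so one may assume $\lambda(t)\geq\lambda_0>0$.

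\textbf{Rigidity via localized virial.} Pick a radial cutoff $\phi\in C_c^\infty(\R^d)$ coinciding with $|x|^2$ on $\{|x|\leq R\}$ and consider the truncated virial $V_R(t)$ of \eqref{Vt}. The mass-resonance condition \eqref{RC} kills the nonlinear term in \eqref{V'}, and the energy-trapping bound yields
\[
|V_R'(t)|\lesssim R
\]
uniformly in $t$. Differentiating once more, and using \ref{H5}, \ref{H6}, and the Pohozaev identity to handle the nonlinear terms, one obtains
\[
V_R''(t)\geq 8\bigl(K(\ub_c(t))-2P(\ub_c(t))\bigr)-\mathrm{err}(R),
\]
where $\mathrm{err}(R)\to 0$ as $R\to\infty$ uniformly in $t$ thanks to the precompactness of the orbit and the radial Strauss-type decay. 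Since $\E(\ub_c)=K(\ub_c(t))-2P(\ub_c(t))$ is conserved and, by energy trapping combined with the non-triviality of $\ub_c$, strictly positive, choosing $R$ large yields $V_R''(t)\geq c\,\E(\ub_c)>0$. Integrating on $[0,T]$ then contradicts the bound $|V_R'(t)|\lesssim R$ as $T\to\infty$, forcing $\ub_c\equiv 0$ and contradicting its non-scattering.

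\textbf{Main obstacle.} The most delicate step is the profile decomposition. Since the groups $U_k(t)=e^{it\gamma_k\Delta/\alpha_k}$ propagate at component-dependent speeds, a single choice of scaling parameters cannot diagonalize all $l$ components simultaneously; one has to extract profiles component by component and then show, using the polynomial structure coming from \ref{H5} and the symmetrization afforded by \ref{H6}, that cross-interactions between profiles with asymptotically orthogonal parameters vanish in the relevant Strichartz norms. A secondary, more technical, difficulty is the transfer from coercivity of the conserved $K-2P$ to coercivity of the truncated virial $V_R''$: here \ref{H6} is exactly what lets one absorb $\mathrm{Re}\int F(\ub_c)$ into its manifestly positive envelope, so that the cutoff error in $V_R''$ remains controllable uniformly along the compact orbit.
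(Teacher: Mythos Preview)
Your overall strategy matches the paper's Kenig--Merle scheme, but two steps fail as written. First, homogeneity \ref{H5} gives the Pohozaev relation $K(\psib)=\frac{2d}{d-2}P(\psib)$, not $K(\psib)=2P(\psib)$; the latter would force $\E(\psib)=0$, contradicting even your own claim $\E(\psib)=\tfrac{1}{d}K(\psib)$. The same miscount reappears in the rigidity step: the localized virial produces
\[
V_R''(t)=8\Bigl(K(\ub_c(t))-\tfrac{2d}{d-2}P(\ub_c(t))\Bigr)+\mathrm{err}(R),
\]
not $8(K-2P)=8\E$. The quantity $K-\tfrac{2d}{d-2}P$ is the Pohozaev functional, which vanishes on ground states and is \emph{not} conserved, so positivity of $\E$ alone does not give $V_R''\geq c>0$. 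One needs instead the separate coercivity $K-\tfrac{2d}{d-2}P\geq\delta'K$ (the paper's Lemma~\ref{qCE}), followed by energy trapping $K\gtrsim\E$, to conclude $V_R''\gtrsim\E(\ub_c)>0$.

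Second, in the energy-critical regime the blow-up alternative is stated in terms of the $\Sb(I)$ norm (Proposition~\ref{standblow}), not the $\dot{\Hb}^1$ norm: a uniform bound on $K(\ub(t))$ does \emph{not} preclude finite-time blow-up, because the local existence time depends on the profile of the data rather than merely its size. Hence energy trapping plus Theorem~\ref{LWP} does not by itself yield global existence, and the critical element $\ub_c$ cannot be assumed global at the moment it is extracted. In the paper this is a genuine additional step (Proposition~\ref{kg53}): assuming $T^+<\infty$ one shows $\lambda(t)\to\infty$ as $t\to T^+$, and then precompactness together with mass conservation forces $\ub_c\equiv 0$.
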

The above result may be extended to $d \geq 5$ without the radial assumption by employing the strategy in \cite{KV}. This will be explored in future work.
	
Finally, we investigate blow-up in finite time. Here, we consider initial data in $\mathbf{H}^1(\mathbb{R}^d)$, which gives us control over the $L^2$ norm. Next we state the local theory in this space.
\begin{theorem}\label{LWP2}
Let $3 \leq d \leq 5$ and suppose that assumptions \ref{H1}-\ref{H2} are satisfied. For $\mathbf{u}_0 \in \mathbf{H}^1(\mathbb{R}^d)$, there exists $T = T(\ub_0) > 0$ such that for the interval $I = [-T(\ub_0), T(\ub_0)]$, there is a unique solution $\ub \in C(I; {\mathbf{H}}^1(\R^d))$ to the Cauchy problem \eqref{SISTQ}, with $\ub \in \mathbf{S}(I)\cap \mathbf{X}(I)$ and $\nabla \ub \in \mathbf{X}(I)$.
\end{theorem}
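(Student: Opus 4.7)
The plan is to repeat the contraction mapping argument used for Theorem \ref{LWP}, augmented with an additional norm that carries the $L^2$ part of the $\mathbf{H}^1$ initial data through the Duhamel iteration. Concretely, I would work on the complete metric space of functions $\ub$ on $I\times\R^d$ satisfying
\[
\|\ub\|_{L^\infty_I \mathbf{L}^2_x} + \|\ub\|_{L^\infty_I \dot{\mathbf{H}}^1_x} + \|\ub\|_{\mathbf{X}(I)} + \|\nabla\ub\|_{\mathbf{X}(I)} \leq \rho,
\]
for suitable $\rho>0$ comparable to $\|\ub_0\|_{\mathbf{H}^1}$, equipped with the natural distance. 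On this space I define the Duhamel operator $\Phi(\ub)$ componentwise as in \eqref{defsol} and look for a fixed point. By the usual density argument in the Strichartz norms, one can choose $T>0$ small enough that the linear evolutions $U_k(t)u_{k0}$ are arbitrarily small in $\mathbf{X}(I)\cap \mathbf{S}(I)$ while still bounded by $\|\ub_0\|_{\mathbf{H}^1}$ in the energy norms.

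The estimates come from Strichartz on the admissible pair $(\tfrac{2(d+2)}{d-2}, \tfrac{2d(d+2)}{d^2+4})$ and on the endpoint $(\infty,2)$. They yield, schematically,
\[
\|\Phi(\ub)\|_{E}\lesssim \|\ub_0\|_{\mathbf{H}^1} + \sum_{k=1}^l \|f_k(\ub)\|_{L^{q'}_I L^{r'}_x} + \sum_{k=1}^l \|\nabla f_k(\ub)\|_{L^{q'}_I L^{r'}_x}.
\]
The nonlinear estimate uses hypothesis \ref{H2}, which gives both $|f_k(\zb)|\lesssim |\zb|^{(d+2)/(d-2)}$ and a Lipschitz-type bound for $f_k(\ub)-f_k(\vb)$. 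Distributing as $|\ub|^{4/(d-2)}\cdot \ub$ and applying H\"older in spacetime, the factor $|\ub|^{4/(d-2)}$ is absorbed into $\|\ub\|_{\mathbf{S}(I)}^{4/(d-2)}$, while the remaining linear factor is controlled by $\|\ub\|_{\mathbf{X}(I)}$ (for the $L^2$ piece) or by $\|\nabla\ub\|_{\mathbf{X}(I)}$ (for the $\dot{\mathbf{H}}^1$ piece). The Sobolev embedding \eqref{SXnorms} keeps $\|\ub\|_{\mathbf{S}(I)}$ dominated by $\|\nabla\ub\|_{\mathbf{X}(I)}$, so both self-map and contraction estimates close once $T$ is chosen small enough, producing the unique fixed point; continuity in $C(I;\mathbf{H}^1)$ follows from the endpoint Strichartz bound together with a standard continuity-in-time argument.

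The main new point, compared with Theorem \ref{LWP}, is precisely the control of $\|\ub\|_{\mathbf{X}(I)}$ rather than just $\|\nabla \ub\|_{\mathbf{X}(I)}$. This requires estimating $f_k(\ub)$ in the dual admissible norm without placing a derivative on it, which in turn forces $\ub$ itself to live in $\mathbf{X}(I)$. This is exactly the obstacle that makes $\mathbf{X}(I)$ enter the metric space $E$ from the outset and is the reason the statement includes $\ub\in \mathbf{S}(I)\cap \mathbf{X}(I)$. Beyond this bookkeeping I do not anticipate any essentially new difficulty; uniqueness in the stated class and the blow-up alternative are then obtained by the usual iteration and subcritical extension arguments.
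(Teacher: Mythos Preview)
Your proposal is correct and matches the paper's approach: the paper states that Theorem~\ref{LWP2} is proved exactly as Theorem~\ref{LWP}, with the sole modification of adding the $\mathbf{X}(I)$ norm to the ball in the contraction mapping argument (citing \cite[Theorem~2.5]{KM}). Your identification of this additional norm as the only new ingredient---needed to estimate $f_k(\ub)$ in the dual admissible norm without a derivative---is precisely the point the paper makes.
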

Since we can also work with the $L^2$ norm, there is no need to impose the mass-resonance condition \eqref{RC}, and we can work directly with the original system \eqref{SISTQ}. The following result asserts that if the initial data has energy below that of the ground states, but kinetic energy exceeding that of the ground states, then the maximal interval of existence of the corresponding solution must be finite.
	\begin{theorem}\label{blow4}
		Let $3 \leq d \leq 5$. Suppose $\mathbf{u}_0\in \mathbf{H}^1(\mathbb{R}^d)$ and let $\mathbf{u}$ be the corresponding solution of \eqref{SISTQ} defined in the maximal time interval of existence, say, $I$. If $\mathbf{u}_0$  is  radially symmetric functions satisfying
		\begin{equation}\label{41}
			E(\mathbf{u}_0)<\mathcal{E}(\boldsymbol{\psi})
		\end{equation}
		and
		\begin{equation}\label{42}
			K(\mathbf{u}_0)>K(\boldsymbol{\psi}),
		\end{equation}
		where $\boldsymbol{\psi}$ is any ground state solution, and $\mathcal{E}$ is the energy defined in \eqref{anotherE}, then the time interval $I$ is finite.
	\end{theorem}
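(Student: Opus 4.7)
The plan is a radial localized virial/convexity argument of Ogawa--Tsutsumi type, driven by a quantitative Pohozaev gap that propagates along the flow. Since $\ub_0 \in \Hb^1(\R^d)$ we have $L^2$ control and may work directly with \eqref{SISTQ}; in particular the mass-resonance hypothesis \eqref{RC} is not needed, because the weaker assumption \ref{H4} will play its role once the virial weight is chosen appropriately.

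\emph{Step 1 (variational trapping).} First I would convert \eqref{41}--\eqref{42} into a uniform coercivity estimate. From the Pohozaev identity $K(\psib) = \tfrac{2d}{d-2}P(\psib)$ (an Euler-type computation using \ref{H3} and \ref{H5}), one gets $\E(\psib) = \tfrac{2}{d}K(\psib)$. Combining the sharp critical Sobolev-type inequality of Corollary~\ref{corol314},
\begin{equation*}
2P(\ub) \;\leq\; \tfrac{d-2}{d}\,K(\psib)^{-2/(d-2)}\,K(\ub)^{d/(d-2)},
\end{equation*}
with the chain $\E(\ub(t)) \leq E(\ub(t)) = E(\ub_0) < \E(\psib)$ (using $L(\ub) \geq 0$ and energy conservation), a standard analysis of the scalar function $g(y) = y - \tfrac{d-2}{d}K(\psib)^{-2/(d-2)} y^{d/(d-2)}$ shows that $K(\ub(t)) > K(\psib)$ persists for all $t \in I$, and yields some $\delta > 0$ for which
\begin{equation*}
K(\ub(t)) - \tfrac{2d}{d-2}P(\ub(t)) \;\leq\; -\delta, \qquad t \in I.
\end{equation*}

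\emph{Step 2 (radial localized virial).} Let $\phi \in C^{\infty}([0,\infty))$ be a radial cutoff with $\phi(r) = r^2$ on $r \leq 1$, $\phi''(r) \leq 2$, and $\phi$ smoothly constant for $r \geq 2$, and set $\phi_R(x) = R^2 \phi(|x|/R)$. Define the weighted virial
\begin{equation*}
V_R(t) \;:=\; \sum_{k=1}^{l}\sigma_k\alpha_k \int_{\R^d} \phi_R(x)\,|u_k(t,x)|^2\,dx.
\end{equation*}
The weight $\sigma_k\alpha_k$ is chosen precisely so that the pointwise nonlinear integrand appearing in $V_R'(t)$ is proportional to $\IM\sum_k \sigma_k f_k(\ub)\overline{u}_k$, which vanishes by \ref{H4}; this replaces the role played by \eqref{RC} in \eqref{V'}. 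Differentiating a second time via the standard commutator identities gives the virial identity
\begin{equation*}
V_R''(t) \;=\; 8\bigl[K(\ub(t)) - \tfrac{2d}{d-2}P(\ub(t))\bigr] + \mathcal{R}_R(t),
\end{equation*}
where $\mathcal{R}_R$ is supported in $\{|x| \geq R\}$ and collects the curvature deficit of $\phi_R$ in the kinetic piece, a bi-Laplacian contribution $\int \Delta^2\phi_R\,|\ub|^2$, and the nonlinear discrepancy arising from replacing $|x|^2$ by $\phi_R$.

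\emph{Step 3 (remainder estimate and convexity).} Strauss's radial Sobolev embedding $|u_k(x)| \lesssim |x|^{-(d-1)/2}\|u_k\|_{\Hb^1}$, valid for $|x|\geq 1$, combined with the growth bound \ref{H2} and the homogeneity \ref{H5}, controls the nonlinear portion of $\mathcal{R}_R$ by $C R^{-2(d-1)/(d-2)}$, and the linear portions by $O(R^{-2} M(\ub_0))$, the constants depending only on the conserved quantities $M(\ub)$ and $E(\ub)$. Taking $R$ sufficiently large secures $|\mathcal{R}_R(t)| \leq 4\delta$ uniformly in $t \in I$, so that Step~1 gives $V_R''(t) \leq -4\delta$ throughout $I$. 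Integrating twice yields $V_R(t) \leq V_R(0) + V_R'(0)\,t - 2\delta t^2$, which is incompatible with $V_R \geq 0$ on an unbounded half-line; hence $I$ must be bounded. The main obstacle is Step~3: without mass resonance, it is the algebraic interplay of \ref{H3}, \ref{H4}, \ref{H5} and the Euler relation $\RE\sum_k \overline{u}_k f_k(\ub) = \tfrac{2d}{d-2}\RE F(\ub)$ that restores the coercive functional $K - \tfrac{2d}{d-2} P$ at leading order, and extracting a genuinely negative power of $R$ from the outer-region nonlinear integral requires \ref{H2} together with the uniform $\Hb^1$ bound inherited from mass and energy conservation.
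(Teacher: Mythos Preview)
Your Step~2 contains the central gap. Choosing the virial weight $\sigma_k\alpha_k$ does kill the nonlinear contribution to $V_R'(t)$ via \ref{H4}, but it does \emph{not} produce the claimed formula for $V_R''(t)$. When you differentiate $V_R'(t)=2\sum_k\sigma_k\gamma_k\,\IM\int\nabla\phi_R\cdot\nabla u_k\,\overline{u}_k\,dx$ once more, each component picks up a further factor $\gamma_k/\alpha_k$ from the equation $i\alpha_k\partial_t u_k+\gamma_k\Delta u_k=\cdots$. The kinetic piece therefore appears with weights $\sigma_k\gamma_k^2/\alpha_k$, not the weights $\gamma_k$ defining $K(\ub)$; and the nonlinear piece becomes $\RE\sum_k(\sigma_k\gamma_k/\alpha_k)f_k(\ub)\overline{u}_k$, which collapses to a multiple of $\RE F(\ub)$ through the Euler relation \eqref{fk24} only when $\sigma_k\gamma_k/\alpha_k$ is independent of $k$. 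Both requirements are equivalent to the mass-resonance condition \eqref{RC} you are trying to avoid. There is simply no weight that simultaneously clears the nonlinear term in $V_R'$ and makes the leading part of $V_R''$ equal to $8[K-\tfrac{2d}{d-2}P]$ unless \eqref{RC} holds; this is precisely why the paper discards the genuine virial and works instead with the functional $\mathcal{R}(t)=2\sum_k\alpha_k\,\IM\int\nabla\phi_R\cdot\nabla u_k\,\overline{u}_k\,dx$, which is not the derivative of any nonnegative quantity.

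Because $\mathcal{R}$ is not $V'$ for any $V$, your ``integrate twice and force $V<0$'' endgame is unavailable. The paper instead proves $\mathcal{R}'(t)\le -2\delta$, combines $\mathcal{R}(t)\le -\delta t$ with the bound $|\mathcal{R}(t)|\lesssim R\,M(\ub_0)^{1/2}K(\ub(t))^{1/2}$ to get $K(\ub(t))\gtrsim t^2$, feeds this back to upgrade to $\mathcal{R}'(t)\lesssim -K(\ub(t))$, and closes via an ODE inequality $\eta'/\eta^2\ge A$ for $\eta(t)=\int^t K(\ub(s))\,ds$. There is also a secondary problem in your Step~3: the radial Gagliardo--Nirenberg bound on the exterior nonlinear term carries a positive power of $K(\ub(t))$, which is not conserved and is not a priori bounded in the regime $K(\ub)>K(\psib)$; this has to be absorbed back into the main term through Young's inequality (for $d>3$) or by taking $R$ large (for $d=3$), rather than estimated by conserved quantities alone.
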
 
The condition $3 \leq d \leq 5$ in the above result is required only due to Theorem \ref{LWP2}, though the proof method applies to all dimensions $d \geq 3$. To prove the above theorem, we employ the convexity method, which, roughly speaking, consists of deriving a contradiction by working with the virial identities for the function \eqref{Vt}. Note that the second term in the identity \eqref{V'} vanishes only under the mass-resonance assumption \eqref{RC}. To address this, we adopt a modification of the method, as in \cite{inui}, which involves discarding the second term in \eqref{V'} and instead working with the function
	\begin{equation}\label{R}
		\mathcal{R}(t)=\sum_{k=1}^l 2\IM\int\nabla\phi\cdot\nabla u_k\widebar{u}_kdx.
	\end{equation}

	The paper is organized as follows. In Section \ref{secnot}, we introduce the basic notation and preliminary results that will be used throughout the work. Section \ref{secgs} is devoted to establishing the existence of ground state solutions. The global well-posedness and scattering results are presented in Section \ref{secgwp}. Finally, the blow-up result stated in Theorem \ref{blow4} is proved in Section \ref{secbl}.

	\section{Notation and preliminary results}\label{secnot}
	\subsection{Notation}
In this subsection we introduce some notation used throughout the paper. The letter $C$ will be used to denote a positive constant that may vary from line to line. If necessary, we indicate different parameters the constant may depends on. Given any positive quantities $a$ and $b$, the notation $a \lesssim b$ means
that $a \leq Cb$, with $C$ uniform with respect to the set where $a$ and $b$ vary. 
We use $a+$  to denote $a+\varepsilon$ for a sufficiently small $\varepsilon>0$. Here $B(x,R)$ denotes the ball in $\R^d$ centered at $x$ with radius $R>0$. For a complex number $z\in \mathbb{C}$, Re$\,z$ and Im$\,z$ represents its real and imaginary parts.  Also, $\bar{z}$ denotes the complex conjugate of $z$. We denote by $\mathbf{z}\in\C^l$ the vector $(z_1,...,z_l)$, where $z_m=x_m+iy_m$ with $x_m,z_m$ the imaginary parts of $z_m$. We set $\big\bracevert\! \mathbf{z}\!\big\bracevert$ for the vector $(|z_1|,\ldots,|z_l|)$. This is not to be confused with $|\mathbf{z}|=\sqrt{|z_1|^2+\cdots+|z_l|^2}$ which denotes the usual norm of the vector $\mathbf{z}\in\mathbb{C}^l$. The notation $\mathbf{z} \geq 0$ indicates that $z_j \geq 0$ for all $j = 1, \dots, l$.

As usual,  the operators $\partial/\partial z_m$ and $\partial/\partial \widebar{z}_m$ are defined by
	$$
	\frac{\partial}{\partial z_m}=\frac{1}{2}\left(\frac{\partial}{\partial x_m}-i\frac{\partial}{\partial} y_m\right),\quad \frac{\partial}{\partial \widebar{z}_m}=\frac{1}{2}\left(\frac{\partial}{\partial x_m}+i\frac{\partial}{\partial} y_m\right).
	$$
For a subset $A$, we denote by $\mathbf{A}$ the product $A \times \cdots \times A$ ($l$-times). In particular, if $A$ is a Banach space, then $\mathbf{A}$ is also a Banach space equipped with the standard norm defined by the sum. We denote the standard Sobolev, homogeneous Sobolev, and Lebesgue spaces by $H^{s,p}=H^{s,p}(\mathbb{R}^d)$, $\dot{H}^{s,p}=\dot{H}^{s,p}(\R^d)$ and $L^p=L^p(\mathbb{R}^d)$, respectively, equipped with their standard norms. For simplicity, we use $H^s=H^{s,2}$ and $\dot{H}^s=\dot{H}^{s,2}$. For a given time interval $I$, the mixed Lebesgue space $L_t^pL_x^q(I\times\mathbb{R}^d)$, denoted by $L_t^pL_x^q$, is endowed with the norm
	$$
	\Vert f \Vert_{L_t^pL_x^q}=\left(\int_I\left(\int_{\mathbb{R}^d}|f(t,x)|^qdx\right)^{p/q}dt\right)^{1/p},
	$$
with the obvious modification if either $p=\infty$ or $q=\infty$.

	
	\subsection{Implications of the assumptions \ref{H1}–\ref{H8}}
Here we present some properties derived from the assumptions imposed on the nonlinearities $f_k$. First, we prove that mass and energy are conserved quantities.
	\begin{lemma}\label{Conserv}
		Assume that \ref{H3} and \ref{H4} hold. Let $\ub_0\in\mathbf{H}^1(\mathbb{R}^d)$ and $u(t)$ be the corresponding solution to \eqref{SISTQ}, then
		$$
		M(\ub(t))=M(\ub_0)\quad\hbox{and}\quad E(\ub(t))=E(\ub_0),
		$$
		where $M$ and $E$ are defined in \eqref{qmassa} and \eqref{EQ}.
	\end{lemma}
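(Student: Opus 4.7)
The plan is to establish both identities by formal multiplier computations directly from \eqref{SISTQ}, and then justify the manipulations rigorously by approximation. Mass conservation will rely structurally only on \ref{H4}, while energy conservation will rely on the variational structure encoded in \ref{H3}.

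For the mass, I would multiply the $k$-th equation in \eqref{SISTQ} by $\sigma_k \bar u_k$, integrate over $\mathbb{R}^d$, take imaginary parts, and sum in $k$. The Laplacian term $\gamma_k\int \Delta u_k\,\bar u_k\,dx = -\gamma_k\int |\nabla u_k|^2\,dx$ becomes real after integration by parts, and $\beta_k\int|u_k|^2\,dx$ is already real, so both vanish upon taking imaginary parts. What remains is
\[
\frac{d}{dt} M(\ub(t)) \;=\; -\,\mathrm{Im}\int_{\mathbb{R}^d} \sum_{k=1}^{l} \sigma_k\, f_k(\ub)\,\bar u_k\,dx,
\]
whose integrand vanishes pointwise in $x$ by \ref{H4}.

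For the energy, I would pair the $k$-th equation with $\overline{\partial_t u_k}$, integrate over $\mathbb{R}^d$, sum in $k$, and take real parts. The term $i\alpha_k\int|\partial_t u_k|^2\,dx$ is purely imaginary and drops out, and integration by parts converts the $\gamma_k\Delta u_k$ and $\beta_k u_k$ contributions into $-\tfrac{1}{2}\tfrac{d}{dt}(K(\ub)+L(\ub))$. On the other hand, \ref{H3} together with the Wirtinger chain rule and the identity $\mathrm{Re}(w\bar v)=\mathrm{Re}(\bar w v)$ gives
\[
\frac{d}{dt} P(\ub(t)) \;=\; \sum_{k=1}^l \mathrm{Re}\int_{\mathbb{R}^d} f_k(\ub)\,\overline{\partial_t u_k}\,dx,
\]
because the two Wirtinger contributions to $\mathrm{Re}(dF(\ub)/dt)$ recombine, via \ref{H3}, into $\mathrm{Re}(\overline{f_k(\ub)}\,\partial_t u_k)$. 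Matching the two expressions yields $\tfrac{d}{dt}(K(\ub)+L(\ub)-2P(\ub))=0$.

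The main obstacle is rigor, since at the $\mathbf{H}^1$ level the time derivative $\partial_t u_k$ is not a priori in $L^2_x$ and the pairings $\int f_k(\ub)\,\overline{\partial_t u_k}\,dx$ are only formally defined. I would handle this in the standard way by approximating $\ub_0$ in $\mathbf{H}^1$ by a sequence $\ub_0^{(n)}$ of smoother data for which a persistence-of-regularity variant of Theorem \ref{LWP2} produces classical-enough solutions where the chain rule and integration by parts above are fully justified, verifying the two identities at that level, and then passing to the limit $n\to\infty$ using continuous dependence of the flow together with the Lipschitz control on the nonlinearities coming from \ref{H2}.
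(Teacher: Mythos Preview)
Your proposal is correct and follows essentially the same approach as the paper: multiply the $k$-th equation by $\bar u_k$ (resp.\ $\partial_t\bar u_k$), integrate, take imaginary (resp.\ real) parts, sum in $k$, and invoke \ref{H4} (resp.\ \ref{H3} with the chain rule). The paper's proof is purely formal and does not spell out the approximation/regularization step you describe, so your version is in fact slightly more complete.
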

	\begin{proof}
		First, observe that multiplying \eqref{SISTQ} by $u_k$ we get
		$$
		i\alpha_k\partial_t|u_k|^2+\gamma_k\Delta u_k\cdot \bar{u}_k-\beta_k|u_k|^2=f_k(\ub)\bar{u}_k.
		$$
By integrating by parts with respect to $x$, taking the imaginary part of the result, summing over $k$, and applying \ref{H4}, we obtain
		$$
		\partial_t\left(\sumk \frac{\sigma_k\alpha_k}{2}\Vert u_k(t)\Vert_{L^2}^2\right)=-2\IM \sumk \int \sigma_kf_k(\ub)\bar{u}_kdx=0.
		$$
		Therefore, the mass of the system is a conserved quantity. We multiply \eqref{SISTQ} by $\partial_t\bar{u}_k$, add its complex conjugate, integrate by parts with respect to $x$, and sum over $k$ to deduce
		\begin{equation}\label{conservenergy}
			\partial_t\left(\sumk \gamma_k\Vert\nabla u_k\Vert_{L^2}^2+\sumk \beta_k \Vert u_k\Vert_{L^2}^2\right)=2\RE\int\left(\sumk f_k(\ub)\partial_t\bar{u}_k\right)=2\RE\int \partial_t F(\ub(t))dx,
		\end{equation}
where the last equality follows from \ref{H3} and the chain rule. Combining \eqref{conservenergy} with \eqref{EQ}, we get
		$$
		\partial_t E(\ub(t))=0,
		$$
which implies the energy conservation.	
	\end{proof}
		In the following two lemmas, we establish estimates for the nonlinearities $f_k$ and the potential function $F$ appearing in \ref{H3}.
	\begin{lemma}\label{consequences} Let \ref{H1} and \ref{H2} hold.
		\begin{itemize}
			\item[(i)] For any $\mathbf{z},\mathbf{z}'\in \mathbb{C}^l$, we have
			\begin{equation}\label{fk21}
				|f_k(\mathbf{z})-f_k(\mathbf{z}')|\lesssim \sum_{m=1}^l\sum_{j=1}^l\left(|z_j|^{\frac{4}{d-2}}+|z'_j|^{\frac{4}{d-2}}\right)|z_m-z'_m|,\quad k=1,...,l.
			\end{equation}
			In particular, 
			$$
			|f_k(\mathbf{z})|\lesssim \sum_{m=1}^l|z_m|^{\frac{d+2}{d-2}}.
			$$
			\item[(ii)] Let $\mathbf{u}$ and $\mathbf{v}$ be complex-valued functions defined on $\R^d$. Then,
			\begin{equation}\label{gradine}
				\left|\nabla[f_k(\mathbf{u})-f_k(\mathbf{v})]\right|\lesssim \sum_{m=1}^l\sum_{j=1}^l|u_j|^{\frac{4}{d-2}}|\nabla u_m-\nabla v_m|+\sum_{m=1}^l\sum_{j=1}^l\left(|u_j|^{\frac{6-d}{d-2}}+|v_j|^{\frac{6-d}{d-2}}\right)|u_j-v_j||\nabla v_m|.
			\end{equation}
			\item[(iii)] Let $1<p,q,r<\infty$ be such that $\displaystyle\frac{1}{r}=\frac{1}{p}\left(\frac{4}{d-2}\right)+\frac{1}{q}$. Then, for $k=1,\ldots,l$,
			$$
			\Vert \nabla [f_k(\mathbf{u})-f_k(\mathbf{v})]\Vert_{\mathbf{L}^r}\lesssim \Vert\mathbf{u}\Vert_{\mathbf{L}^p}^{\frac{4}{d-2}}\Vert\nabla (\mathbf{u}-\mathbf{v})\Vert_{\mathbf{L}^q}+\left(\Vert \ub\Vert_{L^p}^{\frac{6-d}{d-2}}+\Vert \vb\Vert_{L^p}^{\frac{6-d}{d-2}}\right)\Vert \mathbf{u}-\mathbf{v}\Vert_{\mathbf{L}^p}\Vert\nabla\mathbf{v}\Vert_{\mathbf{L}^q}.
			$$
			In particular, 
			$$
			\Vert \nabla f_k(\mathbf{u})\Vert_{\mathbf{L}^r}\lesssim \Vert\mathbf{u}\Vert_{\mathbf{L}^p}^{\frac{4}{d-2}}\Vert\nabla \mathbf{u}\Vert_{\mathbf{L}^q}.
			$$
					\end{itemize}
	\end{lemma}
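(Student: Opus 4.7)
The key preliminary observation is that assumption \ref{H2}, read as H\"older continuity of the Wirtinger derivatives $\partial f_k/\partial z_m$ and $\partial f_k/\partial \bar z_m$ evaluated at two points, combined with the vanishing of these derivatives at the origin (consistent with \ref{H1} and the energy-critical power-type structure), yields the pointwise bound
\begin{equation*}
\left|\frac{\partial f_k}{\partial z_m}(\mathbf{w})\right|+\left|\frac{\partial f_k}{\partial \bar z_m}(\mathbf{w})\right|\lesssim \sum_{j=1}^l |w_j|^{\frac{4}{d-2}}, \qquad \mathbf{w}\in\mathbb{C}^l,
\end{equation*}
which will be used throughout. For part (i), I would apply the fundamental theorem of calculus along the segment $\mathbf{z}_t=\mathbf{z}'+t(\mathbf{z}-\mathbf{z}')$ together with the Wirtinger chain rule to write
\begin{equation*}
f_k(\mathbf{z})-f_k(\mathbf{z}')=\int_0^1 \sum_{m=1}^l\left[\frac{\partial f_k}{\partial z_m}(\mathbf{z}_t)(z_m-z_m')+\frac{\partial f_k}{\partial \bar z_m}(\mathbf{z}_t)\overline{(z_m-z_m')}\right]dt,
\end{equation*}
insert the derivative bound, and use the convexity inequality $|\mathbf{z}_t|^{4/(d-2)}\lesssim |\mathbf{z}|^{4/(d-2)}+|\mathbf{z}'|^{4/(d-2)}$ to obtain \eqref{fk21}. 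The particular estimate $|f_k(\mathbf{z})|\lesssim \sum_m |z_m|^{(d+2)/(d-2)}$ then follows from taking $\mathbf{z}'=0$ (using \ref{H1}) and applying Young's inequality to each product $|z_m|\,|z_j|^{4/(d-2)}$.

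For part (ii), I would apply the chain rule pointwise in $x$ and perform the natural splitting
\begin{equation*}
\nabla[f_k(\mathbf{u})-f_k(\mathbf{v})]=\sum_{m=1}^l\frac{\partial f_k}{\partial z_m}(\mathbf{u})\nabla(u_m-v_m)+\sum_{m=1}^l\left[\frac{\partial f_k}{\partial z_m}(\mathbf{u})-\frac{\partial f_k}{\partial z_m}(\mathbf{v})\right]\nabla v_m,
\end{equation*}
plus the analogous Wirtinger-conjugate terms. The first sum is controlled by the preliminary pointwise derivative bound, producing the first term in \eqref{gradine}. For the second sum, \ref{H2} yields a factor $\sum_j |u_j-v_j|^{4/(d-2)}$. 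The key algebraic step --- which works precisely because $\frac{6-d}{d-2}\geq 0$ in the relevant dimensions --- is the split $\frac{4}{d-2}=1+\frac{6-d}{d-2}$, so that
\begin{equation*}
|u_j-v_j|^{\frac{4}{d-2}}=|u_j-v_j|\cdot|u_j-v_j|^{\frac{6-d}{d-2}}\lesssim |u_j-v_j|\left(|u_j|^{\frac{6-d}{d-2}}+|v_j|^{\frac{6-d}{d-2}}\right),
\end{equation*}
producing the second term in \eqref{gradine}.

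Part (iii) is then a direct application of H\"older's inequality to the pointwise bound from (ii). For the first term I would use two-factor H\"older with exponents $\frac{p(d-2)}{4}$ and $q$, using $\||u_j|^{4/(d-2)}\|_{L^{p(d-2)/4}}=\|u_j\|_{L^p}^{4/(d-2)}$ and noting that $\frac{4}{(d-2)p}+\frac{1}{q}=\frac{1}{r}$ is exactly the assumed relation. For the second term I would use three-factor H\"older with exponents $\frac{p(d-2)}{6-d}$, $p$, and $q$, checking that $\frac{6-d}{(d-2)p}+\frac{1}{p}+\frac{1}{q}=\frac{4}{(d-2)p}+\frac{1}{q}=\frac{1}{r}$. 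The particular case $\|\nabla f_k(\mathbf{u})\|_{\mathbf{L}^r}\lesssim \|\mathbf{u}\|_{\mathbf{L}^p}^{4/(d-2)}\|\nabla\mathbf{u}\|_{\mathbf{L}^q}$ follows by choosing $\mathbf{v}=\mathbf{0}$, since $f_k(\mathbf{0})=0$ by \ref{H1}. The whole argument is essentially bookkeeping; the only conceptual subtlety is the correct reading of \ref{H2} together with the implicit vanishing of the Wirtinger derivatives at the origin, and the main technical care is required in matching the three exponents in the three-factor H\"older of (iii) to the prescribed Leibniz-type identity for $r$, $p$, and $q$.
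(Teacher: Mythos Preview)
Your proposal is correct and follows the same route as the paper: part (i) via the fundamental theorem of calculus along a segment, part (ii) via the chain rule together with the splitting $\tfrac{4}{d-2}=1+\tfrac{6-d}{d-2}$ (the paper records this by citing \cite[Remark~2.3]{cazetal} and noting $\tfrac{4}{d-2}>1$ for $3\le d\le 5$), and part (iii) via H\"older. Your explicit flagging of the implicit assumption that the Wirtinger derivatives of $f_k$ vanish at the origin is a fair observation; the paper simply uses this without comment.
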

	\begin{proof}
		 The Fundamental Theorem of Calculus yields part (i). Part (ii) is established using the chain rule and \cite[Remark 2.3]{cazetal}, observing that for $3 \leq d \leq 5$, we have $\frac{4}{d-2} > 1$. Part (iii) follows from combining part (ii) with Hölder's inequality.
	\end{proof}
		\begin{lemma}\label{lemma22}
		Assume that \ref{H1}, \ref{H2}, \ref{H3} and \ref{H5} hold. 
		\begin{itemize}
			\item[(i)] Let $\mathbf{z},\mathbf{z}'\in\C^l$. Then
			\begin{equation}\label{fk22}
				|\RE F(\zb)-\RE F(\mathbf{z}')|\lesssim \sum_{m=1}^l\sum_{j=1}^l\left(|z_j|^{\frac{d+2}{d-2}}+|z'_j|^{\frac{d+2}{d-2}}\right)|z_m-z_m'|.
			\end{equation}
			In particular, 
			\begin{equation}\label{fk23}
				|\RE F(\mathbf{z})|\lesssim \sum_{j=1}^l|z_j|^{\frac{2d}{d-2}}.
			\end{equation}
			\item[(ii)] Let $\mathbf{u}$ be a complex-valued function defined on $\R^d$. Then
			\begin{equation}\label{fk24}
				\RE \sum_{k=1}^lf_k(\ub)\widebar{u}_k=\RE[2d/(d-2) F(\ub)].
			\end{equation}
		\item[(iii)] Let $\ub:\R^d\rightarrow \C$. Then,
		\begin{equation}\label{regradF}
			\RE\sumk f_k(\ub)\nabla\bar{u}_k=\RE[\nabla F(\ub)].
		\end{equation}
		
			\item[(iv)] We have
			\begin{equation}\label{fk25}
				f_k(x)=\frac{\partial F}{\partial x_k}(x),\quad x\in\R^l.
			\end{equation}
			In addition, $F$ is positive on the positive cone of $\R^l$.
		\end{itemize}
	\end{lemma}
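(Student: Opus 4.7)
The plan is to prove the four statements in an order that lets each invoke its predecessors, exploiting (H3) as the unifying device that rewrites every quantity involving $f_k$ as a combination of the Wirtinger derivatives $\partial F/\partial z_k$ and $\partial F/\partial \bar z_k$. With this translation in hand, homogeneity (H5) and the chain rule deliver the identities, and Lemma~\ref{consequences}(i) supplies the pointwise bound on $f_k$ needed in (i).

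I would start with \eqref{fk24} via Euler's identity. Differentiating $F(\lambda\mathbf{z})=\lambda^{\frac{2d}{d-2}}F(\mathbf{z})$ in $\lambda$ and setting $\lambda=1$ gives
\begin{equation*}
\sum_{k=1}^l\Bigl(z_k\frac{\partial F}{\partial z_k}(\mathbf{z})+\bar z_k\frac{\partial F}{\partial \bar z_k}(\mathbf{z})\Bigr)=\frac{2d}{d-2}F(\mathbf{z}).
\end{equation*}
Taking real parts and using $\RE(z_k\partial_{z_k}F)=\RE(\bar z_k\overline{\partial_{z_k}F})$, the left-hand side becomes $\RE\sum_k\bar z_k(\partial_{\bar z_k}F+\overline{\partial_{z_k}F})=\RE\sum_k\bar z_k f_k(\mathbf{z})$ by (H3), which is \eqref{fk24}. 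Identity \eqref{regradF} is the same algebraic manipulation applied to the chain rule $\nabla F(\ub)=\sum_k(\partial_{z_k}F(\ub)\nabla u_k+\partial_{\bar z_k}F(\ub)\nabla\bar u_k)$: after expanding $\sum_k f_k(\ub)\nabla\bar u_k$ via (H3) and noting $\RE(\overline{\partial_{z_k}F(\ub)}\,\nabla\bar u_k)=\RE(\partial_{z_k}F(\ub)\,\nabla u_k)$, one recovers $\RE[\nabla F(\ub)]$.

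For \eqref{fk22} I would apply the fundamental theorem of calculus along the segment $w(t)=t\mathbf{z}+(1-t)\mathbf{z}'$ and recycle the above real-part manipulation to collapse the integrand onto $f_k$:
\begin{equation*}
\RE F(\mathbf{z})-\RE F(\mathbf{z}')=\int_0^1\RE\sum_{k=1}^l(z_k-z_k')\overline{f_k(w(t))}\,dt.
\end{equation*}
Then Lemma~\ref{consequences}(i) supplies $|f_k(w(t))|\lesssim \sum_j|w_j(t)|^{\frac{d+2}{d-2}}\lesssim \sum_j(|z_j|^{\frac{d+2}{d-2}}+|z_j'|^{\frac{d+2}{d-2}})$, which combined with the factor $|z_k-z_k'|$ yields \eqref{fk22}. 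The bound \eqref{fk23} follows by choosing $\mathbf{z}'=\mathbf{0}$ after noting that (H5) at $\mathbf{z}=\mathbf{0}$ forces $F(\mathbf{0})=0$. The main obstacle in the whole lemma lies here: one must resist the temptation to estimate $\partial_{z_k}F$ or $\partial_{\bar z_k}F$ individually (no direct pointwise control is available) and instead reassemble them into $f_k$ via (H3), where Lemma~\ref{consequences}(i) gives a clean bound.

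Finally, for \eqref{fk25}, on $\R^l$ the Wirtinger derivatives reduce via $\partial_{z_k}+\partial_{\bar z_k}=\partial_{x_k}$, so once $F|_{\R^l}$ is real-valued one has $\partial_{z_k}F(x)=\overline{\partial_{z_k}F(x)}$, and (H3) then gives $f_k(x)=\partial_{x_k}F(x)$. Positivity of $F$ on the positive cone is then immediate from \eqref{fk24}: for $y\in\R^l$ with each $y_k\geq 0$,
\begin{equation*}
\frac{2d}{d-2}F(y)=\sum_{k=1}^l y_k f_k(y)\geq 0,
\end{equation*}
using the non-negativity of the $f_k$'s on the positive cone.
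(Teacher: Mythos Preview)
Your argument coincides with the paper's: for (i) the paper packages your reassembly-into-$f_k$ manoeuvre as the single identity $f_k=2\,\partial_{\bar z_k}\RE F$ and then invokes the fundamental theorem of calculus together with \eqref{fk21}, while (ii) and (iii) are obtained, exactly as you do, by differentiating the homogeneity relation \ref{H5} in $\lambda$ and by the chain rule, followed by taking real parts and using \ref{H3}. Your positivity argument in (iv) via \eqref{fk24} is precisely the intended one.

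One small caveat on the first half of (iv): the implication ``$F|_{\R^l}$ real-valued $\Rightarrow\ \partial_{z_k}F(x)\in\R$ for $x\in\R^l$'' does not hold in general, since $\partial_{z_k}F=\tfrac12(\partial_{x_k}F-i\partial_{y_k}F)$ and the normal derivative $\partial_{y_k}F(x)$ is unconstrained by the values of $F$ on $\R^l$ alone. The paper's own (very terse) treatment of (iv) works instead from the identity $f_k=2\,\partial_{\bar z_k}\RE F$ and likewise tacitly leans on \ref{H7}; in either route one is really proving $\RE f_k(x)=\partial_{x_k}\RE F(x)$, which together with \ref{H7} is what is actually used downstream.
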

	\begin{proof}
		To prove part (i), we first observe that
		\begin{equation}\label{refk}
			f_k(\zb)=2\frac{\partial}{\partial \widebar{z}_k}\RE F(\zb).
		\end{equation}
		Combining this with \eqref{fk21} and applying the Fundamental Theorem of Calculus, we obtain \eqref{fk22}. For part (ii), we differentiate both sides of \ref{H5} with respect to $\lambda$, evaluate at $\lambda = 1$, and then take the real part, using \ref{H3} to conclude. For part (iii), we first differentiate $F$ with respect to $x_j$ and apply the chain rule. Taking the real part of the result and invoking \ref{H3}, we obtain the desired conclusion. Finally, part (iv) follows directly from \eqref{refk} and \eqref{fk25}.
	\end{proof}
		The next lemma shows that our system is gauge invariant.
	\begin{lemma}\label{GC}
		Assume that \ref{H3} and \ref{H4} hold. For any $\theta\in\R$ and $\zb\in\C^l$, we have
		\begin{itemize}
			\item[(i)] $\RE F\displaystyle\left(e^{i\frac{\sigma_1}{2}\theta}z_1,...,e^{i\frac{\sigma_l}{2}\theta}z_l\right)=\RE F(\zb).$\\
			\item[(ii)] The nonlinearities $f_k$, $k=1,...,l$ satisfy the Gauge condition
			$$
			f_k\left( e^{i\frac{\sigma_1}{2}\theta}z_1,...,e^{i\frac{\sigma_l}{2}\theta}z_l\right)= e^{i\frac{\sigma_k}{2}\theta}f_k(\zb).
			$$
		\end{itemize}
	\end{lemma}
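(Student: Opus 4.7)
The plan is to prove (i) by a one-parameter differentiation argument that reduces to \ref{H4}, and then to deduce (ii) by applying the Wirtinger derivative $\partial/\partial \overline{z}_k$ to the identity in (i) and invoking the formula $f_k=2\,\partial_{\overline{z}_k}\RE F$ recorded in \eqref{refk}.

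For part (i), I would fix $\zb\in\C^l$, set $w_k(\theta):=e^{i\sigma_k\theta/2}z_k$ with $\wb(\theta)=(w_1(\theta),\ldots,w_l(\theta))$, and define $g(\theta):=\RE F(\wb(\theta))$. The Wirtinger chain rule, together with $\dot{w}_k=\tfrac{i\sigma_k}{2}w_k$ and $\dot{\overline{w}}_k=-\tfrac{i\sigma_k}{2}\overline{w}_k$, gives
$$
g'(\theta)=\RE\left[\frac{i}{2}\sum_{k=1}^{l}\sigma_k\left(\frac{\partial F}{\partial z_k}(\wb)\,w_k-\frac{\partial F}{\partial \overline{z}_k}(\wb)\,\overline{w}_k\right)\right].
$$
Using $\RE(iX)=-\IM X$, and noting that \ref{H3} yields $f_k(\wb)\overline{w}_k=\tfrac{\partial F}{\partial \overline{z}_k}(\wb)\overline{w}_k+\overline{\tfrac{\partial F}{\partial z_k}(\wb)\,w_k}$, so that $\IM(f_k(\wb)\overline{w}_k)=\IM\bigl(\tfrac{\partial F}{\partial \overline{z}_k}(\wb)\overline{w}_k-\tfrac{\partial F}{\partial z_k}(\wb)w_k\bigr)$, the previous display rearranges into $g'(\theta)=\tfrac12\IM\sum_{k=1}^{l}\sigma_k f_k(\wb)\overline{w}_k$. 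This vanishes by \ref{H4} applied at $\wb$, so $g$ is constant, and evaluating at $\theta=0$ gives (i).

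For part (ii), I would apply the operator $2\,\partial/\partial\overline{z}_k$ to both sides of the identity in (i). On the right, \eqref{refk} directly produces $f_k(\zb)$. On the left, since $\partial w_m/\partial \overline{z}_k=0$ and $\partial \overline{w}_m/\partial \overline{z}_k=\delta_{mk}\,e^{-i\sigma_k\theta/2}$, the chain rule combined with \eqref{refk} applied at $\wb(\theta)$ gives $e^{-i\sigma_k\theta/2}f_k(\wb(\theta))$. Equating the two sides yields the gauge condition in (ii).

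The main technical hurdle is the sign bookkeeping in the computation of $g'(\theta)$: one must apply the Wirtinger chain rule correctly, extract the imaginary part after peeling off the factor of $i$, and then use \ref{H3} to identify the resulting bracketed expression with $\IM\sum_{k}\sigma_k f_k(\wb)\overline{w}_k$ so that \ref{H4} can be invoked. Once this identification is made cleanly, both parts of the lemma follow immediately.
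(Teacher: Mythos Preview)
Your proof is correct and follows essentially the same route as the paper: differentiate $\RE F(\wb(\theta))$ in $\theta$ via the chain rule, identify the result through \eqref{refk} (i.e., \ref{H3}) with $\tfrac12\IM\sum_k\sigma_k f_k(\wb)\overline{w}_k$, and invoke \ref{H4} for (i); then obtain (ii) by applying $2\,\partial_{\overline{z}_k}$ to (i) and using \eqref{refk} again. The paper's proof is simply a terser statement of exactly these steps.
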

	\begin{proof}
		Part (i) follows from \eqref{refk} and the chain rule. Part (ii) is a direct consequence of part (i).
	\end{proof}
		Another immediate consequence of assumptions \ref{H3} and \ref{H5} is that the nonlinearities  $f_k$ are homogeneous of degree $\frac{d+2}{d-2}$. Indeed, for any $\zb\in\C^l$ and $\lambda>0$, we have
		\begin{equation}\label{25}
			f_k(\lambda\zb)=\lambda^{\frac{d+2}{d-2}}f_k(\zb),\quad k=1,...,l.
		\end{equation}
	
	\subsection{Classical results}
	To conclude this section, we present several well-known results that are useful for our analysis. We begin with the following definition.
	\begin{definition}[Admissible pair]\label{Adpair} A pair $(q,r)$ is admissible if it satisfies
		\begin{equation}\label{admissible} \displaystyle\frac{2}{q}+\frac{d}{r}=\frac{d}{2},
		\end{equation}
		and $2\leq r\leq \frac{2d}{d-2}$.
	\end{definition}
	\begin{lemma}\label{strichartz}[Strichartz estimates \cite[Theorem 2.3.3]{cazenave}] The following inequalities hold.
		\begin{itemize}
			\item[(i)] If $(q,r)$ is an admissible pair, then for all $f\in L^2(\mathbb{R}^d)$
			$$
			\Vert e^{it\Delta}f\Vert_{L^q_tL^r_x(\mathbb{R}\times\mathbb{R}^d)}\lesssim \Vert f\Vert_{L^2_x(\mathbb{R}^d)}.
			$$
			
			\item[(ii)]Let $I$ be a time interval and $t_0\in \bar{I}$. If $(q_1,r_1)$ and $(q_2,r_2)$ are two admissible pairs, then
			$$
			\left\Vert \int_{t_0}^t e^{i(t-s)\Delta}f(\cdot,s)ds\right\Vert_{L^{q_1}_tL^{r_1}_x(I\times\mathbb{R}^d)}\lesssim \Vert f\Vert_{L^{q'_2}_tL^{r'_2}_x(I\times\mathbb{R}^d)}.
			$$
			
		\end{itemize} 
	\end{lemma}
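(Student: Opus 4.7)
The plan is to reduce both estimates to a dispersive bound and then run a $TT^\ast$/Hardy--Littlewood--Sobolev (HLS) argument, which is the classical Strichartz/Keel--Tao scheme. The starting point is the explicit kernel representation
\[
e^{it\Delta}f(x)=(4\pi it)^{-d/2}\int_{\R^d}e^{i|x-y|^2/4t}f(y)\,dy,
\]
which immediately gives the $L^1\to L^\infty$ dispersive bound $\|e^{it\Delta}f\|_{L^\infty_x}\lesssim |t|^{-d/2}\|f\|_{L^1_x}$. Interpolating this with the $L^2$-unitarity of the propagator via Riesz--Thorin yields $\|e^{it\Delta}f\|_{L^r_x}\lesssim |t|^{-d(\frac12-\frac1r)}\|f\|_{L^{r'}_x}$ for every $r\in[2,\infty]$.

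For part (i), I would set $Tf(t,x):=e^{it\Delta}f(x)$ and observe that $T:L^2_x\to L^q_tL^r_x$ is equivalent, via duality, to a bound on
\[
TT^\ast F(t,x)=\int_{\R}e^{i(t-s)\Delta}F(s,\cdot)\,ds
\]
between the dual mixed-norm spaces $L^{q'}_tL^{r'}_x$ and $L^q_tL^r_x$. Substituting the dispersive estimate turns the $s$-integration into a convolution in time with kernel $|t|^{-d(\frac12-\frac1r)}$; the admissibility relation $\tfrac{2}{q}+\tfrac{d}{r}=\tfrac{d}{2}$ is exactly what makes the HLS inequality apply with the correct exponents, closing all non-endpoint cases.

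For part (ii), the same $TT^\ast$ identity, now applied with two (possibly different) admissible pairs, controls the non-retarded integral $\int_{\R}e^{i(t-s)\Delta}F\,ds$ between $L^{q'_2}_tL^{r'_2}_x$ and $L^{q_1}_tL^{r_1}_x$. The retarded integral $\int_{t_0}^t e^{i(t-s)\Delta}F\,ds$ appearing in Duhamel's formula is then recovered from the non-retarded one via the Christ--Kiselev lemma, whose hypotheses are satisfied strictly away from the double-endpoint situation.

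The main technical obstacle, as always for Strichartz, is the endpoint $(q,r)=(2,\frac{2d}{d-2})$ available for $d\ge 3$: there HLS degenerates (the convolution kernel is exactly $|t|^{-1}$) and Christ--Kiselev is inapplicable. Handling it requires the bilinear Keel--Tao argument, which dyadically decomposes $\{|t-s|\sim 2^j\}$, proves off-diagonal $TT^\ast$ bounds by real interpolation between the $L^2\to L^2$ and $L^1\to L^\infty$ estimates, and sums the pieces via a Whitney-type decomposition. Since the lemma is stated verbatim from \cite[Theorem 2.3.3]{cazenave}, I would set up the $TT^\ast$/HLS framework above and defer the endpoint step to the textbook.
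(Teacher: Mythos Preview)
Your outline is a correct sketch of the classical Strichartz/Keel--Tao argument. Note, however, that the paper does not prove this lemma at all: it is stated as a known result with a direct citation to \cite[Theorem 2.3.3]{cazenave}, so there is no ``paper's own proof'' to compare against---you have simply supplied the standard textbook argument that the reference contains.
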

	
	\begin{remark}\label{remstri}
		The above results remain valid if we replace $e^{it\Delta}$ with $U_k(t)$, $k=1,\ldots,l$ (see Definition \ref{Solution}).
	\end{remark}
	\begin{lemma}\label{sobolev}[Sobolev embedding \cite[(A.11)]{tao3}]. Let $1<p<q<\infty$ and $s>0$ satisfying $\frac{1}{p}=\frac{1}{q}+\frac{s}{d}$, then
		$$
		\Vert f\Vert_{L^q(\R^d)}\lesssim \Vert f\Vert_{\dot{\mathbf{H}}^{s,p}(\R^d)}.
		$$
	\end{lemma}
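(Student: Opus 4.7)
The plan is to deduce the inequality from the boundedness of the Riesz potential, that is, the Hardy--Littlewood--Sobolev (HLS) inequality. First I would write $f = c_{d,s}\, I_s g$, where $g := (-\Delta)^{s/2} f$ so that $\Vert g\Vert_{L^p} = \Vert f\Vert_{\dot{H}^{s,p}}$, and $I_s$ denotes the Riesz potential of order $s$, defined by convolution with the locally integrable radial kernel $|x|^{-(d-s)}$. Since $s > 0$ and $g$ lies in $L^p$ with the stated exponent relation, this representation is valid on tempered distributions modulo polynomials, which suffices for the norm estimate. The lemma is thereby reduced to proving the classical bound $\Vert I_s g\Vert_{L^q} \lesssim \Vert g\Vert_{L^p}$ under the exponent relation $\frac{1}{p} - \frac{1}{q} = \frac{s}{d}$.

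To establish this HLS bound I would employ the standard ``inner/outer'' splitting argument. For each $R > 0$,
\[
|I_s g(x)| \lesssim \int_{|y| \leq R} |y|^{-(d-s)} |g(x-y)|\, dy + \int_{|y| > R} |y|^{-(d-s)} |g(x-y)|\, dy.
\]
Decomposing the inner region dyadically and using the Hardy--Littlewood maximal function $Mg$ bounds the first piece by $R^{s} Mg(x)$, while H\"older's inequality bounds the second by $R^{s - d/p} \Vert g\Vert_{L^p}$ (this tail integral converges precisely because $s < d/p$, which is automatic from the hypothesis $q < \infty$). Optimizing in $R$ yields the pointwise estimate $|I_s g(x)| \lesssim (Mg(x))^{p/q} \Vert g\Vert_{L^p}^{1 - p/q}$. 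Raising to the power $q$, integrating, and invoking the maximal theorem $\Vert Mg\Vert_{L^p} \lesssim \Vert g\Vert_{L^p}$ (valid since $p > 1$) delivers the required estimate, and hence the lemma.

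The main obstacle is ensuring that the argument's exponent conditions are consistent with the hypotheses. One must check that $1 < p < q < \infty$ together with $\frac{1}{p} = \frac{1}{q} + \frac{s}{d}$ forces $1 < p < d/s$, so that both the maximal inequality is applicable and the tail integral in the H\"older step converges; this is a short algebraic verification. As alternatives, one could bypass HLS entirely by using a Littlewood--Paley decomposition combined with Bernstein's inequalities applied frequency-by-frequency, or, in the case where $s$ is a positive integer, by iterating the Gagliardo--Nirenberg--Sobolev inequality. The Riesz potential route outlined above is the most direct in the general fractional range $s > 0$ required here.
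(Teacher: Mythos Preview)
Your argument is correct and is the standard Hedberg-type proof of the homogeneous Sobolev inequality via the Hardy--Littlewood--Sobolev estimate for Riesz potentials. The paper, however, does not supply a proof at all: the lemma is stated with a direct citation to \cite[(A.11)]{tao3} and treated as a black box. So there is nothing to compare at the level of technique; you have simply filled in what the paper outsources to a reference, and the route you chose (Riesz potential representation plus maximal-function optimization) is exactly the classical one that underlies the cited result.
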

			
\begin{lemma}\label{lema comp}[\cite[Corollary 3.2]{pastor2}]
	Let $I\subset\mathbb{R}$ be an open interval with $0\in I$, $a\in\R$, $b>0$ and $q>1$. Define $\gamma=(bq)^{-1/(q-1)}$ and $f(r)=a-r+br^q$, for $r>0$. Let $G(t)$ be a nonnegative continuous function such that $f\circ G\geq 0$ in $I$. Assume that $a<(1-\delta)\displaystyle\left(1-\displaystyle\frac{1}{q}\displaystyle\right)\gamma$, for some $\delta>0$ sufficiently small, then we have
	\begin{itemize}
		\item[(i)] If $G(0)<\gamma$ then there exists $\delta_1=\delta_1(\delta)>0$ such that $G(t)<(1-\delta_1)\gamma$, for all $t\in I$;
		\item[(ii)] If $G(0)>\gamma$ then there exists $\delta_2=\delta_2(\delta)$ such that $G(t)>(1+\delta_2)\gamma$, for all $t\in I$.
	\end{itemize}
\end{lemma}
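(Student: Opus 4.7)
The plan is to exploit the geometry of the auxiliary polynomial $f(r)=a-r+br^{q}$ together with the connectedness of $I$ and continuity of $G$. Analyzing $f$ on $[0,\infty)$, one has $f'(r)=-1+bqr^{q-1}$, which vanishes precisely at $r=(bq)^{-1/(q-1)}=\gamma$, so $f$ is strictly decreasing on $[0,\gamma]$, strictly increasing on $[\gamma,\infty)$, and attains its unique minimum at $r=\gamma$. Using $b\gamma^{q}=\gamma/q$, one computes $f(\gamma)=a-(1-1/q)\gamma$, and the hypothesis $a<(1-\delta)(1-1/q)\gamma$ gives the quantitative bound $f(\gamma)<-\delta(1-1/q)\gamma<0$.

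Next, I would quantify how far the zeros of $f$ lie from $\gamma$: I want to produce $\delta_{1},\delta_{2}>0$, depending on $\delta$ and $q$, such that $f((1-\delta_{1})\gamma)<0$ and $f((1+\delta_{2})\gamma)<0$. Since $f'(\gamma)=0$ and $f''(\gamma)=(q-1)/\gamma>0$, a Taylor expansion at $\gamma$ yields
$$f\bigl((1\pm s)\gamma\bigr)=f(\gamma)+\tfrac{q-1}{2}\gamma s^{2}+O(\gamma s^{3}).$$
Inserting the bound $f(\gamma)<-\delta\tfrac{q-1}{q}\gamma$ and choosing $s$ of order $\sqrt{\delta}$ with a constant depending only on $q$ makes the right-hand side strictly negative, producing the desired $\delta_{1}=\delta_{1}(\delta)$ and $\delta_{2}=\delta_{2}(\delta)$. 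By strict monotonicity of $f$ on each side of $\gamma$, the positive roots $r_{1}^{*}<\gamma<r_{2}^{*}$ of $f$ (whenever they exist) then satisfy $r_{1}^{*}<(1-\delta_{1})\gamma$ and $r_{2}^{*}>(1+\delta_{2})\gamma$.

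With these estimates, the set $\{r\geq 0:f(r)\geq 0\}$ is contained in the disjoint union $[0,(1-\delta_{1})\gamma]\cup[(1+\delta_{2})\gamma,\infty)$. Since $f\circ G\geq 0$ on $I$, the image $G(I)$ lies in this union; by continuity of $G$ and connectedness of $I$, $G(I)$ is connected and must be contained in a single component. The value $G(0)$ then selects the component: in case (i), $G(0)<\gamma<(1+\delta_{2})\gamma$ forces $G(I)\subset[0,(1-\delta_{1})\gamma]$, while in case (ii), $G(0)>\gamma>(1-\delta_{1})\gamma$ forces $G(I)\subset[(1+\delta_{2})\gamma,\infty)$.

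The main technical obstacle is the quantitative separation carried out in the second step: because $\gamma$ is a critical point of $f$, the linear Taylor term vanishes, and one must work with the quadratic term, which is why $\delta_{1},\delta_{2}$ scale like $\sqrt{\delta}$ rather than $\delta$. This degeneracy is precisely what forces the hypothesis that $\delta$ be sufficiently small; once the separation is in hand, the topological argument closing the proof is immediate.
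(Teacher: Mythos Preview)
The paper does not supply its own proof of this lemma; it is quoted verbatim from \cite[Corollary 3.2]{pastor2} and left unproved. Your argument is correct and is the standard one: the analysis of the convex profile $f$, the observation that $f(\gamma)=a-(1-1/q)\gamma<-\delta(1-1/q)\gamma$, the resulting quantitative separation of the sublevel set $\{f\geq 0\}$ into two disjoint pieces bounded away from $\gamma$, and the connectedness argument to trap $G(I)$ in a single piece are exactly the ingredients one expects. Your Taylor computation at the degenerate critical point is a clean way to extract the $\sqrt{\delta}$ separation, and your remark that the ``$\delta$ sufficiently small'' hypothesis is what absorbs the cubic remainder is accurate; one could equally well argue by continuity of the auxiliary function $s\mapsto (1-s)-(1-s)^{q}/q$ without Taylor, but your route is perfectly valid.
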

	\begin{remark}
		The above result also holds with $\delta=0=\delta_1=\delta_2$. For details see \cite[Lemma 3.1]{pastor2}.
	\end{remark}

\begin{proposition}\label{Virial}[Virial identities]
	Assume that $\mathbf{u}_0\in\mathbf{H}_x^1(\mathbb{R}^d)$ and let $\mathbf{u}$ be the corresponding solution of \eqref{SISTQ}. Let $\varphi\in C_0^{\infty}(\mathbb{R}^d)$ and define
	$$
	V(t)=\int\varphi(x)\left(\sum_{k=1}^l\frac{\alpha_k^2}{\gamma_k}|u_k|^2\right)dx.
	$$
	Then,
	$$
	V'(t)=2\sum_{k=1}^l\alpha_k\mathrm{Im}\int\nabla\varphi\cdot\nabla u_k\widebar{u_k}dx,
	$$
	and
	\begin{equation}\label{virial}
		\begin{split}
			V''(t)&=4\sum_{1\leq m,j\leq 6}\mathrm{Re}\int\frac{\partial^2\varphi}{\partial x_m\partial x_j}\left[\sum_{k=1}^l\gamma_k\partial_{x_j}\bar{u}_k\partial_{x_m}u_k\right]dx\\
			&\quad -\int \Delta^2\varphi\left(\sum_{k=1}^l\gamma_k|u_k|^2\right)dx-\frac{8}{d-2}\mathrm{Re}\int\Delta\varphi F(\mathbf{u})dx.
		\end{split}
	\end{equation}
\end{proposition}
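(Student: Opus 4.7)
The plan is to differentiate $V(t)$ twice, substituting $\partial_t u_k$ from \eqref{SISTQ} and performing multiple integrations by parts. The computation is justified for smooth, rapidly decaying initial data via the regularity of the flow, and extended to $\mathbf{H}^1$ data by density and continuous dependence, in the spirit of \cite{cazenave,linares}. Rewriting \eqref{SISTQ} as $\partial_t u_k = \frac{i}{\alpha_k}(\gamma_k\Delta u_k - \beta_k u_k + f_k(\ub))$ and using that $\beta_k|u_k|^2$ is real, we obtain
$$
\partial_t|u_k|^2 = -\frac{2\gamma_k}{\alpha_k}\IM(\bar{u}_k\Delta u_k) - \frac{2}{\alpha_k}\IM(\bar{u}_k f_k(\ub)).
$$
Multiplying by $\alpha_k^2/\gamma_k$, summing in $k$, integrating against $\varphi$, invoking the mass-resonance condition \eqref{RC} to eliminate the nonlinear term, and integrating by parts on the Laplacian piece yields the claimed formula for $V'(t)$.

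Differentiating $V'(t)$ and substituting the equation a second time, the $\beta_k$ contributions cancel algebraically, leaving
$$
\alpha_k\partial_t(\bar{u}_k\nabla u_k) = i\gamma_k\bigl[\bar{u}_k\nabla\Delta u_k - \Delta\bar{u}_k\nabla u_k\bigr] + i\bigl[\bar{u}_k\nabla f_k(\ub) - \overline{f_k(\ub)}\nabla u_k\bigr].
$$
Pairing with $\nabla\varphi$ and taking $\IM$ (which turns the outer $i$ into $\RE$) splits $V''(t)$ into a linear and a nonlinear contribution. The linear contribution is handled by the classical Morawetz-type identity, obtained via two successive integrations by parts, which yields precisely the Hessian quadratic form $\sum_{m,j}\partial^2_{x_m x_j}\varphi\,\partial_{x_j}\bar{u}_k\partial_{x_m}u_k$ and the remainder $-\Delta^2\varphi|u_k|^2$ with the correct weights $\gamma_k$, summed in $k$.

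For the nonlinear contribution, applying the product rule to $\nabla(\bar{u}_k f_k(\ub))$ and integrating by parts gives
$$
\int\nabla\varphi\cdot\sum_k\bar{u}_k\nabla f_k(\ub)\,dx = -\int\Delta\varphi\sum_k\bar{u}_kf_k(\ub)\,dx - \int\nabla\varphi\cdot\sum_k f_k(\ub)\nabla\bar{u}_k\,dx,
$$
and the piece involving $\sum_k\overline{f_k(\ub)}\nabla u_k$ is treated by complex conjugation inside the real part. Taking real parts throughout, Lemma \ref{lemma22}(ii) converts $\RE\sum_k\bar{u}_kf_k(\ub)$ into $\tfrac{2d}{d-2}\RE F(\ub)$, and Lemma \ref{lemma22}(iii) converts $\RE\sum_k f_k(\ub)\nabla\bar{u}_k$ into $\nabla\RE F(\ub)$, which is then integrated by parts against $\nabla\varphi$ to produce another copy of $\Delta\varphi\,\RE F(\ub)$. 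Combining all contributions gives a net coefficient of $2\bigl(-\tfrac{d+2}{d-2}+1\bigr) = -\tfrac{8}{d-2}$ in front of $\int\Delta\varphi\,\RE F(\ub)\,dx$, completing the identity. The main obstacle lies precisely in this bookkeeping for the nonlinear term: one must thread the scaling identities of Lemma \ref{lemma22} together with the correct integrations by parts to extract the prefactor $-\tfrac{8}{d-2}$, whose value is a direct signature of the energy-critical homogeneity \ref{H5}.
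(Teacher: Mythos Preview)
Your approach is correct and coincides with the paper's: the paper's proof is simply a pointer to the classical computation in \cite[Lemma 2.9]{kavian}, noting that Lemma \ref{lemma22} is the key tool for the nonlinear term, and your sketch fills in precisely those details---differentiating, substituting the equation, integrating by parts, and using parts (ii) and (iii) of Lemma \ref{lemma22} to extract the coefficient $-\tfrac{8}{d-2}$. One point worth flagging: you correctly invoke the mass-resonance condition \eqref{RC} to kill the nonlinear contribution in $V'(t)$, which the statement of the proposition does not list explicitly but which the paper's own computation \eqref{V'} in the introduction shows is required for the stated formula; this is a presentation issue in the paper rather than a flaw in your argument.
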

\begin{proof}
	   The proof is an adapted version of the one presented in \cite[Lemma 2.9]{kavian}. Here, Lemma \ref{lemma22} is the key tool to get the desired identities.
\end{proof}	
	
	\section{Existence of ground state solution}\label{secgs}
This section is devoted to prove the existence of ground state solutions as stated in Theorem \ref{ESGS}. We begin by noting that we do not assume the potential function $F$ is strictly positive away from the origin. The following result establishes a property of the potential energy $P(\psib)$ that allows us to address this issue by placing our problem in an appropriate set, rather than $\Dz$.
	\begin{lemma}\label{lema31}
		Let $\mathcal{N}:=\{\boldsymbol{\psi}\in \dot{\mathbf{H}}^1(\mathbb{R}^d);\,\, P(\boldsymbol{\psi})>0\}$. If $\mathcal{C}$ denotes the set of all non-trivial solutions of  \eqref{SIST3}, then  $\mathcal{C}\subset \mathcal{N}$. 
	\end{lemma}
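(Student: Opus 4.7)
The plan is to test the weak equation \eqref{SIST3} against the solution itself and then invoke the homogeneity identity \eqref{fk24} from Lemma \ref{lemma22}(ii) to force $P(\psib)$ to be a positive multiple of $K(\psib)$, which is positive for any nontrivial element of $\mathcal{C}$.

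More concretely, I would fix $\psib\in\mathcal{C}$ and use $\mathbf{g}=\psib\in\Dz$ as a test function in the weak formulation \eqref{solfraca}. Summing over $k=1,\dots,l$ and taking the real part yields
\begin{equation*}
K(\psib)=\sum_{k=1}^{l}\gamma_k\int|\nabla\psi_k|^2\,dx=\RE\sum_{k=1}^{l}\int f_k(\psib)\,\overline{\psi}_k\,dx.
\end{equation*}
Applying the pointwise identity \eqref{fk24}, namely $\RE\sum_{k=1}^{l}f_k(\psib)\overline{\psi}_k=\frac{2d}{d-2}\RE F(\psib)$, and integrating over $\R^d$, I obtain the Pohozaev-type relation
\begin{equation*}
K(\psib)=\frac{2d}{d-2}\,P(\psib).
\end{equation*}

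Since $\psib$ is nontrivial, it must have $K(\psib)>0$ (otherwise all $\psi_k$ would be constant in $\Dz$, hence identically zero). Consequently $P(\psib)=\frac{d-2}{2d}K(\psib)>0$, which is exactly the condition $\psib\in\mathcal{N}$. This gives $\mathcal{C}\subset\mathcal{N}$, as claimed.

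I do not anticipate a genuine obstacle here: the only subtlety is making sure the identity \eqref{fk24} (derived from \ref{H3} and \ref{H5}) can be legitimately paired with the weak formulation, which is immediate since \ref{H1}--\ref{H2} and Sobolev embedding guarantee that $f_k(\psib)\in L^{2d/(d+2)}$ and $\psi_k\in L^{2d/(d-2)}$, making the integrals absolutely convergent. Everything else is a short calculation.
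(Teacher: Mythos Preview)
Your proof is correct and follows essentially the same approach as the paper: test the weak formulation against $\psib$ itself, sum over $k$, and apply the homogeneity identity from Lemma~\ref{lemma22}(ii) to obtain $K(\psib)=\frac{2d}{d-2}P(\psib)$, whence $P(\psib)>0$ for nontrivial $\psib$. Your added remarks on taking the real part and on the integrability of the pairing are minor refinements but do not alter the argument.
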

	\begin{proof}
		Assume $\boldsymbol{\psi}\in\mathcal{C}$. By inserting $\mathbf{g}=\boldsymbol{\psi}$ in \eqref{solfraca} we obtain
		$$
		\int |\nabla \psi_k|^2dx=\int f_k(\boldsymbol{\psi})\psi_kdx,\quad k=1,...,l.	
		$$
		Using Lemma \ref{lemma22} (ii) we get 
		\begin{equation}\label{36}
			K(\psib)=\left(\frac{2d}{d-2}\right)P(\psib).
		\end{equation} 
		Since $\psib$ is non-trivial, it follows that $P(\psib)>0$ and $\psib\in\mathcal{N}$ as desired.
	\end{proof}
	
	Now, let us introduce the functionals 
	\begin{equation}\label{J}
		J(\psib):=\frac{K(\psib)^{\frac{d}{d-2}}}{P(\psib)},\quad \psib\in\mathcal{N}.
	\end{equation}
	and
	\begin{equation}\label{E}
		\mathcal{E}(\psib):=K(\psib)-2P(\psib).
	\end{equation}
	
	\begin{remark}\label{obs32}
		Using \eqref{36}, we observe that $S(\psib) = \frac{2}{d-2}P(\psib)=\frac{1}{2}\mathcal{E}(\psib)$. Thus, by the definition of $J$, we obtain
$$
J(\psib)=\frac{K(\psib)^{\frac{d}{d-2}}}{P(\psib)}=\left(\frac{2d}{d-2}\right)^{\frac{d}{d-2}}P(\psib)^{\frac{2}{d-2}}=\frac{2}{d-2}d^{\frac{d}{d-2}}S(\psib)^{\frac{2}{d-2}}.
$$ 
Hence, a non-trivial solution to \eqref{SIST3} is a ground state if and only if it has the least energy $\mathcal{E}$ among all solutions, which is equivalent to minimizing $J$.
	\end{remark}

	\subsection{Critical and localized Sobolev-type inequalities.}
	
	For the remainder of this section, we assume that the vector $\ub$ is a real-valued function. First, applying Lemma \ref{lemma22} and Sobolev's inequality, we obtain
	\begin{equation*}
		|P(\ub)|\leq \int |\RE F(\ub)|dx\leq C\sum_{k=1}^l \Vert u_k\Vert_{L^{\frac{2d}{d-2}}}^{\frac{2d}{d-2}}\leq C_0 K(\ub)^{\frac{d}{d-2}}
	\end{equation*}
	where $C_0$ is a positive constant depending on $\gamma_k$. Hence, we obtain the following critical Sobolev-type inequality
	\begin{equation}\label{SCI}
		P(\ub)\leq C K(\ub)^{\frac{d}{d-2}},
	\end{equation}
	for some positive constant $C$. In particular, there exists a constant $C > 0$ such that
	$$
	\frac{1}{C}\leq J(\ub), \quad \ub\in\mathcal{N}
	$$
and we define the optimal constant by
	\begin{equation}\label{Min}
		C_{\rm opt}^{-1}:=\inf\{J(\ub);\,\ub\in\mathcal{N}\}.
	\end{equation} 

As stated in Remark \ref{obs32}, a ground state solution exists once the above infimum is achieved. Let $({\ub_m}) \subset \mathcal{N}$ be a minimizing sequence for \eqref{Min}, that i $J(\ub_m) \to C_{\rm opt}^{-1}$. A straightforward calculation shows that $K(\big\bracevert\! \mathbf{u}\!\big\bracevert)\leq K(\ub)$ , and combined with \ref{H6}, it follows that $J(\big\bracevert\! \mathbf{u}\!\big\bracevert)\leq J(\ub)$. Thus, we can assume, without loss of generality, that minimizing sequences are non-negative. Moreover, by the homogeneity of the functionals $K$ and $P$, it also follows that $C_{opt}=I^{-\frac{d}{d-2}}$, where $I$ is given by
	\begin{equation}\label{MinNorm}
		I:=\inf\{K(\ub);\, \ub\in\mathcal{N},\,\, P(\ub)=1\}.
	\end{equation}
	Thus, solving the minimization problem \eqref{Min} is equivalent to solving \eqref{MinNorm}. Finally, the functionals $K$ and $P$ are also invariant under the transformation
	\begin{equation}\label{obs34ii}
		\ub\mapsto\ub^{R,y}=R^{-\frac{d-2}{2}}\ub\left( R^{-1}(x-y)\right)
	\end{equation}
where $R>0$, which ensures ensures that our minimization problem is invariant under dilations and translations. 

To conclude this section, we recall a localized version of Sobolev's inequality (see, for instance, \cite[Corollary 9]{FM} and also \cite[Corollary 3.9]{pastor3} for a vector-valued version).
		\begin{lemma}[Localized Sobolev's inequality]\label{corol39}
		Assume $\ub\in\Dz$ satisfies $\ub>0$. For any $\delta>0$ we can find $C(\delta)>0$ such that if $0<r<R$ satisfies $r/R \leq C(\delta)$, then
		\begin{equation}\label{337}
			\int_{B(x,r)}F(\ub)dy\leq I^{-\frac{d}{d-2}}\left[\int_{B(x,R)}\sumk\gamma_k|\nabla u_k|^2dy+\delta K(\ub)\right]^{\frac{d}{d-2}}
		\end{equation}
		and
		\begin{equation}\label{338}
			\int_{\mathbb{R}^d\backslash B(x,R)}F(\ub)dy\leq I^{-\frac{d}{d-2}}\left[\int_{\mathbb{R}^d\backslash B(x,r)}\sumk\gamma_k|\nabla u_k|^2dy(2\delta +\delta^2) K(\ub)\right]^{\frac{d}{d-2}}.
		\end{equation}
	\end{lemma}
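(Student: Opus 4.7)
The plan is to use a carefully chosen radial cutoff combined with the sharp critical Sobolev inequality \eqref{SCI} and Hardy's inequality. The key observation is that a logarithmic cutoff, rather than a linear one, produces an error term that vanishes as $r/R \to 0$.

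\medskip

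\textbf{Step 1 (cutoff setup).} For \eqref{337}, introduce the radial cutoff $\chi$ centered at $x$ defined by $\chi(y)=1$ for $|y-x|\leq r$, $\chi(y)=0$ for $|y-x|\geq R$, and
\begin{equation*}
\chi(y)=\frac{\log(R/|y-x|)}{\log(R/r)}, \qquad r\leq |y-x|\leq R,
\end{equation*}
so that $|\nabla\chi(y)|\leq \frac{1}{|y-x|\log(R/r)}$ in the annulus and vanishes elsewhere. Since $\ub\geq 0$, the product $\chi\ub$ is also nonnegative, and by assumption \ref{H7} we have $F(\chi\ub)\geq 0$, which gives
\begin{equation*}
\int_{B(x,r)}F(\ub)\,dy = \int_{B(x,r)}F(\chi\ub)\,dy \leq \int_{\R^d}F(\chi\ub)\,dy = P(\chi\ub).
\end{equation*}

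\textbf{Step 2 (apply sharp Sobolev).} Using the definition of the optimal constant in \eqref{Min}--\eqref{MinNorm} together with the homogeneity of $F$, the sharp form of \eqref{SCI} reads $P(\vb)\leq I^{-d/(d-2)} K(\vb)^{d/(d-2)}$. Applying this to $\vb=\chi\ub$ and combining with Step~1 yields
\begin{equation*}
\int_{B(x,r)}F(\ub)\,dy \leq I^{-d/(d-2)} K(\chi\ub)^{d/(d-2)}.
\end{equation*}
It now suffices to control $K(\chi\ub)$ by the right-hand side of \eqref{337}.

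\medskip

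\textbf{Step 3 (gradient bound via Minkowski and Hardy).} Expanding $\nabla(\chi u_k)=\chi\nabla u_k+u_k\nabla\chi$ and applying Minkowski's inequality in the form $\left(\sum_k\gamma_k\|\cdot\|_{L^2}^2\right)^{1/2}$:
\begin{equation*}
K(\chi\ub)^{1/2} \leq \Bigl(\sumk \gamma_k\!\!\int \chi^2|\nabla u_k|^2\,dy\Bigr)^{1/2} + \Bigl(\sumk \gamma_k\!\!\int u_k^2|\nabla\chi|^2\,dy\Bigr)^{1/2}.
\end{equation*}
Since $0\leq \chi\leq 1$ and $\mathrm{supp}(\chi)\subset B(x,R)$, the first summand is bounded by $\bigl(\int_{B(x,R)}\sumk\gamma_k|\nabla u_k|^2 dy\bigr)^{1/2}$. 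For the second summand, the logarithmic cutoff yields $|\nabla\chi|^2\leq |y-x|^{-2}\log^{-2}(R/r)$ on the annulus, and Hardy's inequality $\int u_k^2/|y-x|^2\,dy\leq C\int |\nabla u_k|^2\,dy$ (valid for $d\geq 3$) gives
\begin{equation*}
\Bigl(\sumk \gamma_k\!\!\int u_k^2|\nabla\chi|^2\,dy\Bigr)^{1/2} \leq \frac{C_1}{\log(R/r)}\,K(\ub)^{1/2}.
\end{equation*}
Choosing $C(\delta)$ so that $r/R\leq C(\delta)$ forces $C_1/\log(R/r)\leq \delta^{1/2}$, say. Squaring and using $\int_{B(x,R)}\sumk\gamma_k|\nabla u_k|^2\leq K(\ub)$ to absorb the cross term into a $\delta\,K(\ub)$ contribution (after relabeling $\delta$) yields
\begin{equation*}
K(\chi\ub) \leq \int_{B(x,R)}\sumk\gamma_k|\nabla u_k|^2\,dy + \delta\, K(\ub),
\end{equation*}
which combined with Step~2 gives \eqref{337}.

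\medskip

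\textbf{Step 4 (exterior estimate).} For \eqref{338}, repeat the argument with the complementary cutoff $\widetilde{\chi}=1-\chi$, so that $\widetilde{\chi}\equiv 0$ on $B(x,r)$ and $\widetilde{\chi}\equiv 1$ outside $B(x,R)$. The same Minkowski--Hardy computation gives
\begin{equation*}
K(\widetilde{\chi}\ub)^{1/2} \leq \Bigl(\int_{\R^d\setminus B(x,r)}\sumk\gamma_k|\nabla u_k|^2\,dy\Bigr)^{1/2} + \delta\, K(\ub)^{1/2},
\end{equation*}
and squaring produces precisely the factor $(2\delta+\delta^2)K(\ub)$ appearing in \eqref{338}, after bounding the cross term by $2\delta K(\ub)$. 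The localization of $F(\ub)$ on $\R^d\setminus B(x,R)$ in terms of $P(\widetilde{\chi}\ub)$ proceeds as in Step~1, and applying \eqref{SCI} concludes the proof.

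\medskip

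\textbf{Expected main obstacle.} The delicate point is the \emph{linear} dependence of the error term on $\delta$: a naive linear cutoff would yield a factor of $R^2/(R-r)^2$, which does not tend to zero when $r/R\to 0$. Obtaining smallness requires specifically the logarithmic profile, whose singularity $1/|y-x|$ is exactly compensated by Hardy's inequality. Verifying that this choice is compatible with the Minkowski splitting, and that the resulting constant truly depends only on $\delta$ (uniformly in $\ub$, $x$, $r$, $R$), is the crux of the argument.
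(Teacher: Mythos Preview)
Your proof is correct and follows the standard approach in the references the paper cites (\cite[Corollary 9]{FM} and \cite[Corollary 3.9]{pastor3}); the paper itself gives no proof. The logarithmic cutoff combined with Hardy's inequality is exactly the mechanism used there, and your observation that a linear cutoff would fail is the right diagnosis.

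Two minor remarks on bookkeeping. First, the nonnegativity $F(\chi\ub)\geq 0$ does not follow from \ref{H7} alone; you should invoke Lemma~\ref{lemma22}(iv), which combines \ref{H5} and \ref{H7} to conclude that $F$ is nonnegative on the positive cone of $\R^l$. Second, your Step~3 and Step~4 use the \emph{same} Hardy error term $C_1/\log(R/r)\,K(\ub)^{1/2}$, so squaring yields the factor $(2\delta+\delta^2)$ in both cases; the appearance of a bare $\delta$ in \eqref{337} versus $(2\delta+\delta^2)$ in \eqref{338} is an artifact of the paper's statement (likely a relabeling in one but not the other), not a reflection of two different arguments. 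Your relabeling comment in Step~3 handles this adequately.
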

	 
	\subsection{Concentration-compactness} 
	As previously noted, the next step in addressing the minimization problem \eqref{MinNorm} is to apply the concentration-compactness method from \cite{lions2}. We begin with a modified version of Lemma 1.7.4 from \cite{cazenave} regarding the concentration function, defined as follows
	\begin{equation}\label{concfunc}
	 Q_m(R):=\sup_{y\in\mathbb{R}^d}\int_{B(y,R)}F(\ub_m)dx,\quad R>0.
	 \end{equation}
	\begin{lemma}\label{lema310}
		Let $(\ub_m)\subset{\mathbf{L}}^{\frac{2d}{d-2}}(\mathbb{R}^d)$ be such that $\ub_m\geq 0$ and $\int F(\ub_m)dx=1$, for any $m\in\mathbb{N}$. Let $Q_m(R)$ be the concentration function of $F(\ub_m)$ defined by \eqref{concfunc}. Then, for each $m\in\N$, there is $y=y(m,R)$ such that
		$$
		Q_m(R)=\int_{B(y,R)}F(\ub_m)dx. 
		$$
	\end{lemma}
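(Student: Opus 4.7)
Fix $m\in\N$ and $R>0$, and define the map $g:\R^d\to\R$ by
$$
g(y):=\int_{B(y,R)}F(\ub_m)\,dx,
$$
so that by construction $Q_m(R)=\sup_{y\in\R^d}g(y)$. Since the sequence is taken in the real, non-negative cone, Lemma \ref{lemma22}(iv) gives $F(\ub_m)\geq 0$, hence $g\geq 0$. The plan is to verify two properties: (a) $g$ is continuous on $\R^d$, and (b) $g(y)\to 0$ as $|y|\to\infty$. Once these are established, together with the fact that $Q_m(R)>0$ (which follows from $\int F(\ub_m)\,dx=1$ and a finite covering of any bounded region by balls of radius $R$), the supremum is attained on a compact set by a standard extreme value argument.

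The integrability of $F(\ub_m)$ underlies everything. By Lemma \ref{lemma22}(i),
$$
|F(\ub_m)(x)|\lesssim \sum_{k=1}^{l}|u_{m,k}(x)|^{\frac{2d}{d-2}},
$$
so the hypothesis $\ub_m\in\Lb^{\frac{2d}{d-2}}(\R^d)$ yields $F(\ub_m)\in L^{1}(\R^d)$. For the continuity in (a), let $y_n\to y$. Since the boundary $\partial B(y,R)$ has Lebesgue measure zero, $\chi_{B(y_n,R)}\to \chi_{B(y,R)}$ pointwise a.e., and the integrands $F(\ub_m)\chi_{B(y_n,R)}$ are dominated by the fixed $L^{1}$-function $F(\ub_m)$. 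The dominated convergence theorem then gives $g(y_n)\to g(y)$. For (b), absolute continuity of the integral provides, for any $\ep>0$, some $R_0>0$ with $\int_{|x|>R_0}F(\ub_m)\,dx<\ep$; whenever $|y|>R_0+R$ we have $B(y,R)\subset\{|x|>R_0\}$, hence $g(y)<\ep$.

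Combining (a) and (b), $g$ is a non-negative continuous function on $\R^d$ that tends to $0$ at infinity. Choose $\ep_0:=Q_m(R)/2>0$; by (b) there exists $\rho>0$ such that $g(y)<\ep_0$ for every $|y|\geq\rho$. Consequently $Q_m(R)=\sup_{|y|\leq \rho}g(y)$, and since $\overline{B(0,\rho)}$ is compact and $g$ is continuous, this supremum is attained at some $y=y(m,R)\in\overline{B(0,\rho)}$, as claimed. The only mildly delicate point is confirming $Q_m(R)>0$ (to rule out a trivial case), which is immediate because otherwise a covering of $\R^d$ by countably many balls of radius $R$ with bounded overlap would force $\int F(\ub_m)\,dx=0$, contradicting the normalization $\int F(\ub_m)\,dx=1$.
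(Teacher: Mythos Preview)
Your proof is correct. Both arguments hinge on the integrability of $F(\ub_m)$ and end with dominated convergence, but the mechanics differ. You show directly that $g(y)=\int_{B(y,R)}F(\ub_m)\,dx$ belongs to $C_0(\R^d)$ --- continuity via DCT on indicator functions, decay at infinity via the tail estimate for $F(\ub_m)\in L^1$ --- and then invoke the extreme value theorem on a compact ball. The paper instead takes a maximizing sequence of centres $(y_j)$ and argues by contradiction that it must be bounded: if not, one could pass to a subsequence of pairwise disjoint balls $B(y_j,R)$ each carrying mass at least some $\epsilon>0$, forcing $\int F(\ub_m)\,dx=\infty$. A convergent subsequence of $(y_j)$ then yields the maximiser via DCT. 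Your route is slightly cleaner in that it avoids the subsequence extraction and the disjointness combinatorics; the paper's argument, on the other hand, never explicitly checks continuity of $g$ and is a touch more self-contained in that it does not appeal to the $C_0$ framework.
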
 
	\begin{proof}
		Fix $m\in\mathbb{N}$. Given $R>0$, there exists a sequence $(y_j)\subset\R^d$ such that 
		$$
		Q_m(R)=\lim_{j\rightarrow\infty}\int_{B(y_j,R)}F(\ub_m)dx>0.
		$$
Thus, one must find $j_0$ such that if $j\geq j_0$ then $\int_{B(y_j,R)}F(\ub_m)dx\geq \epsilon$, for some $\epsilon>0$. We claim that the sequence $(y_j)$ is bounded. Indeed, if it were not, then up to a subsequence, the balls $B(y_j, R)$ and $B(y_i, R)$ would be disjoint for all $i \neq j$. Then
		$$
		1=\int F(\ub_m)dx\geq \sum_{j\geq j_0}\int_{B(y_j,R)}F(\ub_m)dx=\infty,
		$$
which is a contradiction. Therefore, the sequence $(y_j)$ admits a convergent subsequence with limit $y = y(m, R)$ and the proof is completed by applying the dominated convergence theorem.
	\end{proof}

We now introduce notation for the concentration-compactness framework.  Let $X$ be a locally compact Hausdorff space. We denote by $\mathcal{C}_b(X)$ the space of all bounded continuous functions on $X$, and by $\mathcal{C}_c(X)$ the space of all continuous functions with compact support on $X$. Moreover, $\mathcal{M}_+(X)$ denotes the Banach space of all non-negative measures on $X$, $\mathcal{M}_+^b(X)$ the space of all finite (or bounded) measures and $\mathcal{M}_+^1$ the space of all probability measures. Given two measures $\nu$ and $\mu$, we write $\nu \ll \mu$ to indicate that $\nu$ is absolutely continuous with respect to $\mu$. For any $\mu \in \mathcal{M}_+^b(X)$, the quantity $|\mu| := \mu(X)$ is called the mass of $\mu$. Let us introduce some notions of convergence in the context of measures.
	\begin{definition}
	$\bf{}$
		\begin{itemize}
			\item[(i)] A sequence $(\mu_m)\subset\mathcal{M}_+$ is said to converge vaguely to $\mu$ in $\mathcal{M}_+(X)$, denoted by $\mu_m\overset{\ast}{\rightharpoonup}\mu$, if $\int_Xfd\mu_m\rightarrow\int_Xfd\mu$ for all $f\in\mathcal{C}_c(X)$.
			
			\item[(ii)] A sequence $(\mu_m)\subset\mathcal{M}_+^b(X)$ is said to converge weakly to $\mu$ in $\mathcal{M}_+^b(X)$, denoted by $\mu_m\rightharpoonup\mu$, if $\int_X fd\mu_m\rightarrow\int_X fd\mu$, for all $f\in \mathcal{C}_b(X)$.
			
			\item[(iii)] A sequence $(\mu_m)\subset\mathcal{M}_+^b$ is said to be uniformly tight if, for every $\epsilon>0$, there exists a compact subset $K_\epsilon\subset X$ such that $\mu_m(X\backslash K_\epsilon)\leq \epsilon$ for all $m$. We also say that a set $\mathcal{H}\subset \mathcal{M}_+(X)$ is vaguely bounded if $\sup_{\mu\in\mathcal{H}}\left|\int_Xf d\mu\right|<\infty$ for all $f\in\mathcal{C}_c(X)$.
		\end{itemize}
	\end{definition}
The following result is motivated by Lemma I.1 from \cite{lions1}. For a complete proof we refer the reader to \cite[Lemma 23]{FM}.
	
	\begin{lemma}[Concentration-compactness lemma I] \label{lema35}	
		Suppose that $(\nu_m)$ is a sequence in $\mathcal{M}_+^1(\mathbb{R}^d)$. Then, there is a subsequence, still denoted by $(\nu_m)$, such that one of the following conditions holds:
		\begin{itemize}
			\item[(i)](Vanishing) For all $R>0$ it holds
			$$
			\lim_{m\rightarrow\infty}\left(\sup_{x\in\mathbb{R}^d}\nu_m(B(x,R))\right)=0.
			$$
			
			\item[(ii)](Dichotomy) There is a number $\lambda\in(0,1)$ such that for all $\epsilon>0$ there exists $R>0$ and a sequence $(x_m)$ with the following property: given $R'>R$
			$$
			\nu_m(B(x_m,R))\geq \lambda-\epsilon,
			$$
			$$
			\nu_m(\mathbb{R}^d\backslash B(x_m,R'))\geq 1-\lambda-\epsilon,
			$$
			for $m$ sufficiently large.
			
			\item[(iii)](Compactness) There exists a sequence $(x_m)\subset\mathbb{R}^d$ such that for each $\epsilon>0$ there is a radius $R>0$ with the property
			$$
			\nu_m(B(x_m,R))\geq 1-\epsilon,
			$$
			for all $m$.
		\end{itemize}
	\end{lemma}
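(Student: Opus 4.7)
The plan is to study the Lévy concentration functions
$$Q_m(R) := \sup_{x \in \mathbb{R}^d} \nu_m(B(x, R)), \qquad R > 0.$$
Each $Q_m$ is nondecreasing with values in $[0,1]$ and satisfies $\lim_{R \to \infty} Q_m(R) = 1$ since $\nu_m \in \mathcal{M}_+^1(\mathbb{R}^d)$. I would then invoke Helly's selection theorem (or a diagonal extraction over the rationals combined with monotonicity) to pass to a subsequence, still labeled $(\nu_m)$, such that $Q_m(R) \to Q(R)$ at every continuity point of some nondecreasing limit $Q : (0,\infty) \to [0,1]$. Define
$$\lambda := \lim_{R \to \infty} Q(R) \in [0,1].$$
The trichotomy of the statement corresponds exactly to $\lambda = 0$, $0 < \lambda < 1$, and $\lambda = 1$.

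The vanishing case $\lambda = 0$ is immediate: monotonicity forces $Q(R) = 0$ for every $R$, so $Q_m(R) \to 0$, giving (i). For the dichotomy case $0 < \lambda < 1$, given $\varepsilon > 0$ I would choose a continuity point $R$ of $Q$ with $Q(R) > \lambda - \varepsilon/2$; then for $m$ sufficiently large $Q_m(R) > \lambda - \varepsilon$, so some $x_m \in \mathbb{R}^d$ satisfies $\nu_m(B(x_m, R)) \geq \lambda - \varepsilon$. Now, for any continuity point $R' > R$ of $Q$, taking $m$ large enough that $Q_m(R') \leq Q(R') + \varepsilon \leq \lambda + \varepsilon$, the inequality $\nu_m(B(x_m, R')) \leq Q_m(R')$ yields
$$\nu_m\bigl(\mathbb{R}^d \setminus B(x_m, R')\bigr) \geq 1 - Q_m(R') \geq 1 - \lambda - \varepsilon,$$
which is (ii). A brief density argument handles non-continuity radii.

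The main obstacle is the compactness case $\lambda = 1$, because a naive choice of $R = R(\varepsilon)$ produces a center $x_m = x_m(\varepsilon)$ that depends on $\varepsilon$, whereas the conclusion demands a single sequence $(x_m)$ valid for every $\varepsilon$. I would fix once and for all a threshold $\varepsilon_0 = 1/4$, pick a continuity point $R_0$ with $Q(R_0) > 1 - \varepsilon_0$, and select $x_m$ satisfying $\nu_m(B(x_m, R_0)) > 3/4$ for $m$ large. For any other $\varepsilon \in (0, 1/4)$, choose $R_\varepsilon$ and an auxiliary center $y_m$ with $\nu_m(B(y_m, R_\varepsilon)) > 1 - \varepsilon$. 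Since both balls have $\nu_m$-mass exceeding $1/2$, they cannot be disjoint (otherwise the total mass would exceed $1$), so $|x_m - y_m| \leq R_0 + R_\varepsilon$, whence $B(y_m, R_\varepsilon) \subset B(x_m, R_0 + 2R_\varepsilon)$ and therefore
$$\nu_m\bigl(B(x_m, R_0 + 2 R_\varepsilon)\bigr) \geq \nu_m\bigl(B(y_m, R_\varepsilon)\bigr) > 1 - \varepsilon,$$
establishing (iii) uniformly with this single sequence $(x_m)$. This overlap argument, which is where the probability normalization $|\nu_m| = 1$ is essential, is the only delicate point; everything else is routine monotonicity and a diagonal extraction.
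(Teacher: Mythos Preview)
Your argument is correct and is precisely the classical Lions concentration-compactness proof via L\'evy concentration functions and a Helly-type selection; the paper does not give its own proof of this lemma but simply refers to \cite[Lemma I.1]{lions1} and \cite[Lemma 23]{FM}, where exactly this argument appears. Your handling of the compactness case via the overlap argument (two balls of mass $>1/2$ must intersect) is the standard way to produce an $\varepsilon$-independent sequence of centers, and your treatment of the dichotomy case is fine once one notes, as you do, that the continuity points of $Q$ are dense and monotonicity extends the bounds to all radii.
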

To determine a minimizer for \eqref{MinNorm}, the next step is to construct an appropriate sequence of probability Radon measures. By Lemma \ref{lema35}, up to a subsequence, this sequence will satisfy one of the three conditions outlined. We rule out vanishing and dichotomy, ensuring the sequence's compactness. Consequently, we achieve vague convergence in $\mathcal{M}_+^b(\mathbb{R}^4)$. This convergence enables the application of what we call the concentration-compactness lemma II, inspired by the limit case lemma in \cite{lions2}. In essence, this lemma ensures dilation invariance for the minimization problem.
	
	\begin{lemma}[Concentration-compactness lemma II] \label{lema36}
		Let $(\ub_m) \subset\Dz$ be a sequence such that $\ub_m\geq 0$ and 
		\begin{equation*}
			\left\{\begin{array}{lcc}
				\ub_m\rightharpoonup \ub, & \hbox{in}& \Dz,\\
				\mu_m:=\displaystyle\sum_{k=1}^l\gamma_k|\nabla u_{km}|^2dx\overset{\ast}{\rightharpoonup} \mu & \hbox{in}& \mathcal{M}_+^b(\mathbb{R}^d)\\
				\nu_m:=F(\ub_m) dx \overset{\ast}{\rightharpoonup} \nu,& \hbox{in}& \mathcal{M}_+^b(\mathbb{R}^d). \\
					\end{array}\right.
		\end{equation*}
		Then,
		\begin{itemize}
			\item[(i)] There exists an at  most countable set $J$, a family of distinct points $\{x_j\in \mathbb{R}^d;\, j\in J\}$, and a family of non-negative numbers $\{a_j;\, j\in J\}$ such that
			\begin{equation}\label{315}
				\nu=F(\ub)dx+\sum_{j\in J}a_j\delta_{x_j}.
			\end{equation}
			
			\item[(ii)]Moreover, we have
			\begin{equation}\label{316}
				\mu \geq  \sum_{k=1}^l\gamma_k|\nabla u_k|^2dx +\sum_{j\in J}b_j\delta_{x_j}
			\end{equation}
			fore some family $\{b_j;\, j\in J\},b_j>0,$ such that
			\begin{equation}\label{317}
				a_j\leq I^{-\frac{d}{d-2}}b_j^{\frac{d}{d-2}},\quad \mbox{for all}\quad j\in J.
			\end{equation}
			In particular, $\displaystyle\sum_{j\in J}a_j^{\frac{d-2}{d}}<\infty$.
		\end{itemize}
	\end{lemma}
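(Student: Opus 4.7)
The plan is to adapt the classical ``limit case'' concentration-compactness lemma of P.-L.~Lions (see \cite{lions2}) to our vector-valued, energy-critical setting, using the critical Sobolev inequality \eqref{SCI} in place of the scalar $L^{2^*}$ one. The first move is to subtract off the weak limit: set $\vb_m := \ub_m - \ub$, so that $\vb_m \rightharpoonup 0$ in $\Dz$ and, by Rellich-Kondrachov, $\vb_m \to 0$ in $L^p_{\mathrm{loc}}(\R^d)$ for every $p < \tfrac{2d}{d-2}$; after extracting a subsequence we may also assume $\vb_m \to 0$ pointwise a.e. Define the candidate defect measures $\tilde\mu := \mu - \sum_{k=1}^l \gamma_k |\nabla u_k|^2\,dx$ and $\tilde\nu := \nu - F(\ub)\,dx$. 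Both are nonnegative: the first by weak lower semicontinuity of the quadratic form $\phi\mapsto \sum_k\gamma_k\int\phi|\nabla \cdot|^2\,dx$ applied to $\nabla u_{km}\rightharpoonup\nabla u_k$, the second by Fatou's lemma, noting that $\ub\geq 0$ a.e.\ and hence $F(\ub)\geq 0$ by Lemma \ref{lemma22}(iv). A standard Brezis-Lieb argument, justified by the pointwise estimate \eqref{fk22} together with the usual $\varepsilon$-trick, then yields $\int\phi\,F(\vb_m)\,dx \to \int\phi\,d\tilde\nu$ for every $\phi\in C_c(\R^d)$; expanding $|\nabla v_{km}|^2 = |\nabla u_{km}|^2 - 2\nabla u_{km}\cdot\nabla u_k + |\nabla u_k|^2$ and passing to the limit via weak convergence similarly gives $\sum_k\gamma_k|\nabla v_{km}|^2\,dx \overset{*}{\rightharpoonup}\tilde\mu$.

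The second step transfers the critical Sobolev inequality \eqref{SCI} to the measure level. Fix $\phi\in C_c^\infty(\R^d)$ with $\phi\geq 0$ and apply \eqref{SCI} to $\phi\vb_m$. By the homogeneity \ref{H5}, $F(\phi\vb_m) = \phi^{\frac{2d}{d-2}}F(\vb_m)$, so
\begin{equation*}
\int \phi^{\frac{2d}{d-2}}F(\vb_m)\,dx \;\leq\; I^{-\frac{d}{d-2}}\left(\sum_{k=1}^l\gamma_k\int|\nabla(\phi v_{km})|^2\,dx\right)^{\frac{d}{d-2}}.
\end{equation*}
Expanding $|\nabla(\phi v_{km})|^2 = \phi^2|\nabla v_{km}|^2 + \phi\,\nabla\phi\cdot\nabla(v_{km}^2) + |\nabla\phi|^2 v_{km}^2$ and integrating by parts in the mixed term, the two correction terms each reduce to $\int v_{km}^2 g\,dx$ with $g\in C_c^\infty(\R^d)$, which vanish as $m\to\infty$ by $L^2_{\mathrm{loc}}$ strong compactness of $\vb_m$. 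Passing to the limit yields the reverse-Hölder bound for measures:
\begin{equation*}
\int \phi^{\frac{2d}{d-2}}\,d\tilde\nu \;\leq\; I^{-\frac{d}{d-2}}\left(\int\phi^2\,d\tilde\mu\right)^{\frac{d}{d-2}},\qquad \forall\,\phi\in C_c^\infty(\R^d),\ \phi\geq 0.
\end{equation*}
Approximating characteristic functions of open sets monotonically from below by smooth cutoffs and invoking outer regularity of Radon measures extends this to $\tilde\nu(E)\leq I^{-\frac{d}{d-2}}\tilde\mu(E)^{\frac{d}{d-2}}$ for every Borel set $E\subset\R^d$.

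The final step is the classical measure-theoretic consequence. Because $\tfrac{d}{d-2}>1$, the reverse-Hölder inequality rules out any diffuse part of $\tilde\nu$: partitioning any set of finite $\tilde\mu$-measure into pieces $E_i$ with $\tilde\mu(E_i)<\varepsilon$ gives $\sum_i\tilde\nu(E_i)\leq I^{-\frac{d}{d-2}}\varepsilon^{\frac{2}{d-2}}\tilde\mu(\R^d)\to 0$. Hence $\tilde\nu$ is purely atomic; finiteness of $\tilde\nu(\R^d)$ forces at most countably many atoms, $\tilde\nu = \sum_{j\in J}a_j\delta_{x_j}$ with $a_j>0$, and evaluating the inequality at $E=\{x_j\}$ yields $a_j \leq I^{-\frac{d}{d-2}} b_j^{\frac{d}{d-2}}$ with $b_j:=\tilde\mu(\{x_j\})>0$. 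Unwinding $\nu = F(\ub)\,dx + \tilde\nu$ and $\mu = \sum_k\gamma_k|\nabla u_k|^2\,dx + \tilde\mu \geq \sum_k\gamma_k|\nabla u_k|^2\,dx + \sum_j b_j\delta_{x_j}$ produces \eqref{315} and \eqref{316}, while the summability $\sum_j a_j^{\frac{d-2}{d}}\leq I^{-1}\sum_j b_j\leq I^{-1}\tilde\mu(\R^d)<\infty$ follows at once.

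The chief technical hurdle will be the cross-term analysis in the expansion of $|\nabla(\phi v_{km})|^2$ and, in parallel, the Brezis-Lieb step for the vector- and complex-valued functional $F$: both rely crucially on the subtraction $\ub_m \mapsto \vb_m$ to unlock $L^2_{\mathrm{loc}}$ strong compactness, and both are ultimately tractable thanks to the pointwise growth and Lipschitz estimates of Lemma \ref{lemma22} and the homogeneity \ref{H5}.
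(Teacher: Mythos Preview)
Your argument is correct and is precisely the classical Lions ``limit case'' scheme adapted to the vector-valued functional $F$, which is exactly what the paper invokes by deferring to \cite[Lemma~3.6]{pastor3}. The key ingredients you identify---the Brezis--Lieb splitting for $F$ via the Lipschitz bound \eqref{fk22}, the homogeneity \ref{H5} to factor out $\phi^{\frac{2d}{d-2}}$, and the $L^2_{\mathrm{loc}}$ compactness of $\vb_m$ to kill the cross terms---are the same modifications needed over the scalar case, so there is no substantive difference in approach.
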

	\begin{proof}
		The proof follows the same approach as in the quadratic case presented in \cite[Lemma 3.6]{pastor3}, with suitable modifications for our context.
	\end{proof}
	
We now apply the preceding two lemmas to a minimizing sequence for \eqref{MinNorm}. Here, we provide a sketch of the proof and for a detailed version  the reader may consult \cite[Theorem 3.11]{pastor3} and \cite[Theorem 3.12]{HP}.
	\begin{theorem}\label{TEO311}
		Suppose that $(\ub_m)$ is a minimizing sequence for \eqref{MinNorm} with $\ub_m\geq 0$. Then, up to translation and dilation $(\ub_m)$ is relatively compact in $\mathcal{N}$, that is, there exist a subsequence $(\ub_{m_j})$ and sequences $(R_j)\subset\mathbb{R}$, $(y_j)\subset \mathbb{R}^d$ such that  $\vb_j$ given by
		$$
		\vb_j:=R_j^{-\frac{d-2}{2}}\ub_{m_j}(R_j^{-1}(x-y_j)),
		$$
strongly converges in $\mathcal{N}$ to some nontrivial $\vb$, which minimizes \eqref{MinNorm}.
	\end{theorem}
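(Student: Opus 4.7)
The plan is to carry out the concentration--compactness program of Lemma~\ref{lema35} and Lemma~\ref{lema36} on a properly normalized minimizing sequence, ruling out vanishing and dichotomy to force the compactness alternative. Let $(\ub_m)\subset\mathcal{N}$ be non-negative with $P(\ub_m)=1$ and $K(\ub_m)\to I$. The preparatory step exploits the dilation/translation invariance \eqref{obs34ii} of both $K$ and $P$ together with the concentration function $Q_m(R)=\sup_{y\in\R^d}\int_{B(y,R)} F(\ub_m)\,dx$ from \eqref{concfunc}: Lemma~\ref{lema310} supplies a point $y_m^*(R)$ where the supremum is attained, and continuity of $Q_m$ with $Q_m(0)=0$ and $\lim_{R\to\infty}Q_m(R)=1$ lets me choose $r_m>0$ so that $Q_m(r_m)=1/2$. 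Taking $R=1/r_m$ and an appropriate translation in \eqref{obs34ii} produces a rescaled minimizing sequence $\vb_m\in\mathcal{N}$ with $P(\vb_m)=1$, $K(\vb_m)\to I$, and the normalization
\begin{equation*}
\int_{B(0,1)} F(\vb_m)\,dx=\sup_{y\in\R^d}\int_{B(y,1)} F(\vb_m)\,dx=\tfrac{1}{2}.
\end{equation*}

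Next I would apply Lemma~\ref{lema35} to the probability measures $\nu_m:=F(\vb_m)\,dx$. Vanishing is incompatible with the identity $\sup_y \nu_m(B(y,1))=1/2$. In the dichotomy case with parameter $\lambda\in(0,1)$, a standard cutoff argument on $B(x_m,R)$ and $\R^d\setminus B(x_m,R')$ produces a decomposition $\vb_m=\vb_m^{(1)}+\vb_m^{(2)}+w_m$ with $P(\vb_m^{(1)})\to\lambda$, $P(\vb_m^{(2)})\to 1-\lambda$, and $K(\vb_m)\geq K(\vb_m^{(1)})+K(\vb_m^{(2)})-o(1)$ once the cutoff cross-terms, which live in the annulus $R\leq |x-x_m|\leq R'$, are absorbed. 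The sharp form $K(\vb)\geq I\,P(\vb)^{(d-2)/d}$ of \eqref{SCI}, equivalent to the definition \eqref{MinNorm} of $I$, then yields
\begin{equation*}
I=\lim_{m\to\infty} K(\vb_m)\geq I\bigl[\lambda^{(d-2)/d}+(1-\lambda)^{(d-2)/d}\bigr]>I,
\end{equation*}
since $\lambda^{(d-2)/d}+(1-\lambda)^{(d-2)/d}>1$ for $\lambda\in(0,1)$ and $(d-2)/d<1$; this contradiction rules out dichotomy.

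Only the compactness alternative remains: after translating by the centers $z_m$ supplied by Lemma~\ref{lema35} and passing to subsequences I obtain $\vb_m\rightharpoonup\vb$ in $\Dz$, together with the vague limits $\mu_m:=\sum_k\gamma_k|\nabla v_{km}|^2\,dx\overset{\ast}{\rightharpoonup}\mu$ and $\nu_m\overset{\ast}{\rightharpoonup}\nu$. Lemma~\ref{lema36} gives the atomic decompositions $\nu=F(\vb)\,dx+\sum_{j\in J}a_j\delta_{x_j}$ and $\mu\geq\sum_k\gamma_k|\nabla v_k|^2\,dx+\sum_{j\in J}b_j\delta_{x_j}$ subject to \eqref{317}. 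Tightness of $\nu_m$ furnished by the compactness alternative gives $P(\vb)+\sum_j a_j=1$, and Lemma~\ref{corol39} combined with the same sub-additivity estimate used in the dichotomy step upgrades this to tightness of $\mu_m$, so that $K(\vb)+\sum_j b_j\leq\liminf K(\vb_m)=I$. Combining $P(\vb)\leq I^{-d/(d-2)}K(\vb)^{d/(d-2)}$ from \eqref{SCI} with \eqref{317} and these identities, strict concavity of $t\mapsto t^{(d-2)/d}$ forces each $a_j\in\{0,1\}$; the value $a_{j_0}=1$ is excluded because vague convergence of $(\nu_m)$ to a Dirac would force $\nu_m(B(y_m^*(r_m),r_m))\to 1$, contradicting the normalization $Q_m(r_m)=1/2$ of Step~1. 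Hence $\nu=F(\vb)\,dx$, $P(\vb)=1$, so $\vb\in\mathcal{N}$; weak lower semicontinuity and \eqref{SCI} give $K(\vb)=I$, so $\vb$ minimizes \eqref{MinNorm}. Finally $K(\vb_m)\to K(\vb)$ with $\vb_m\rightharpoonup\vb$ in the Hilbert space $\Dz$ upgrades to strong convergence.

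I expect the main difficulty to lie in this last step: threading the sharp bounds \eqref{SCI} and \eqref{317} through the mass identities $P(\vb)+\sum a_j=1$ and $K(\vb)+\sum b_j\leq I$ and then using strict concavity to kill the atomic part, while simultaneously relying on the Step~1 normalization (rather than minimality alone) to rule out the degenerate case of a single atom of full mass.
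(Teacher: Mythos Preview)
Your overall architecture coincides with the paper's: normalize via \eqref{obs34ii} and Lemma~\ref{lema310} so that $\sup_y\int_{B(y,1)}F(\vb_m)=\int_{B(0,1)}F(\vb_m)=\tfrac12$, apply Lemma~\ref{lema35} to $\nu_m=F(\vb_m)\,dx$, rule out vanishing by the normalization, rule out dichotomy, then in the compactness alternative invoke Lemma~\ref{lema36} and use strict subadditivity of $t\mapsto t^{(d-2)/d}$ together with the $\tfrac12$-normalization to exclude atoms. The identification $K(\vb)=I$ and the upgrade to strong convergence are also as in the paper.

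The gap is in your dichotomy step. The ``standard cutoff argument'' you invoke to produce $\vb_m=\vb_m^{(1)}+\vb_m^{(2)}+w_m$ with $K(\vb_m)\geq K(\vb_m^{(1)})+K(\vb_m^{(2)})-o(1)$ is a \emph{subcritical} device: the cross-term $\int|\nabla\phi|^2|\vb_m|^2$ generated by a smooth cutoff $\phi$ supported in the annulus $\{R\leq|x-x_m|\leq R'\}$ scales exactly like the kinetic energy in the critical case---you have no $L^2$ control, and H\"older against $\|\vb_m\|_{L^{2d/(d-2)}}$ only gives an $O(1)$ bound, not $o(1)$, regardless of how large $R'/R$ is. The paper avoids decomposing $\vb_m$ altogether and instead applies the localized Sobolev inequality of Lemma~\ref{corol39} directly to the measures $\nu_m$, which yields
\[
(\lambda-\epsilon)^{\frac{d-2}{d}}+(1-\lambda-\epsilon)^{\frac{d-2}{d}}\leq I^{-1}(1+C\delta)K(\vb_m)
\]
without any splitting of the functions; sending $m\to\infty$ and then $\epsilon,\delta\to0$ gives the contradiction. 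You do cite Lemma~\ref{corol39}, but in the wrong place: you use it to argue tightness of $\mu_m$, which is both unnecessary (the bound $K(\vb)+\sum_j b_j\leq I$ already follows from $\mu(\R^d)\leq\liminf\mu_m(\R^d)$, i.e.\ lower semicontinuity of total mass under vague convergence) and not obviously true. Move Lemma~\ref{corol39} to the dichotomy step and drop the tightness claim for $\mu_m$, and your outline becomes the paper's proof.
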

	\begin{proof}[Sketch of the proof]
		We first consider $(\ub_m)\subset\mathcal{N}$ to be any minimizing sequence of \eqref{MinNorm} with $\ub_m\geq 0$. Now, notice that by using the invariance \eqref{obs34ii} and Lemma \ref{lema310} we obtain sequences $(R_j)\subset\R$ and $(y_j)\subset\R^d$ such that
		\begin{equation*}
			\vb_j:=R_j^{-\frac{d-2}{2}}\ub_{m_j}(R_j^{-1}(x-y_j)),
		\end{equation*}
		satisfies
		\begin{equation}\label{341}
			\sup_{y\in\mathbb{R}^d}\int_{B(y,1)}\varphi(\vb_m)dx=\int_{B(0,1)}\varphi(\vb_m)dx=\frac{1}{2}.
		\end{equation}
		Applying the invariance property \eqref{obs34ii} again, we conclude that $(\vb_m)$ is also a minimizing sequence for \eqref{MinNorm}. In particular, $(\vb_m)$ is uniformly bounded in $\mathcal{N}$. Hence, up to a subsequence, there exists $\vb \in \Dz$ such that
		\begin{equation}\label{345}
			\vb_m\rightharpoonup \vb\quad \mbox{in }  \Dz.
		\end{equation} 
		We now define the sequences of measures $(\mu_m)$ and $(\nu_m)$ by
		$$
		\mu_m:=\sum_{k=1}^l\gamma_k|\nabla v_{km}|^2dx,\quad\hbox{and}\quad \nu_m:=F(\vb_m) dx.
		$$
Since $(\vb_m)$ is a minimizing sequence for \eqref{MinNorm}, the sequence of measures $(\nu_m)$ is a sequence of probability measures. By Lemma \ref{lema35}, exactly one of the following three cases must occur: vanishing, dichotomy, or compactness. It follows immediately from \eqref{341} that vanishing cannot occur. To rule out the dichotomy case, we proceed by contradiction and apply Corollary \ref{corol39}. Therefore, the compactness case must hold. This means there exists a sequence $(x_m) \subset \mathbb{R}^d$ such that for every $\epsilon > 0$, there exists $R > 0$ with
		\begin{equation}\label{350}
			\nu_m(B(x_m,R))\geq 1-\epsilon,\quad \mbox{for all} \quad m.
		\end{equation}
As a consequence of this compactness property combined with \eqref{341}, we deduce that the sequence $(\nu_m)$ is uniformly tight. By Theorems 31.2 and 30.6 in \cite{Bauer}, it admits a subsequence that converges weakly to some measure $\nu\in\mathcal{M}_+^b(\R^d)$, that is
		\begin{equation}\label{351}
			\int fd\nu_m\rightarrow\int fd\nu,\quad \mbox{for all} \quad f\in{C}_b(\mathbb{R}^4).
		\end{equation}
In particular, by taking $f\equiv 1$, we observe that
		\begin{equation}\label{352}
			\nu(\R^d)=\lim_{m\rightarrow\infty}\nu_m(\R^d)=1
		\end{equation}
and hence $\nu\in\mathcal{M}_+^1(\R^d)$.
		
		Next, the uniformly boundness of $(K(\vb_m))$ implies that $(\mu_m)$ is also vaguely bounded. Then, up to a subsequence, there exists $\mu\in\mathcal{M}_+^b(\R^d)$ such that 
		\begin{equation}\label{353}
			\mu_m \overset{\ast}{\rightharpoonup}\mu\quad\mbox{in}\quad \mathcal{M}_+^b(\mathbb{R}^d).
		\end{equation}
		Therefore, combining \eqref{345}, \eqref{351} and \eqref{353}, we apply Lemma \ref{lema36} to deduce
		\begin{equation}\label{354}
			\mu\geq \sum_{k=1}^l\gamma_k|\nabla v_k|^2dx+\sum_{j\in J}a_j \delta_{x_j}\quad \hbox{and}\quad\nu=F(\vb)dx+\sum_{j\in J}b_j \delta_{x_j},
		\end{equation}
for some family $\{x_j\in\R^d;\,\,j\in J\}$, where $J$ is countable, and non-negative numbers $a_j,b_j$ satisfying
		\begin{equation}\label{355}
			a_j\leq I^{-\frac{d}{d-2}}b_j^{\frac{d}{d-2}},\quad \mbox{for all} \quad j\in J
		\end{equation}
		with the series $\sum_{j\in J}a_j^{\frac{d-2}{d}}$ convergent. Hence, \eqref{SCI}, \eqref{352} and \eqref{355} lead to
		\begin{equation}\label{356}
			\begin{split}
				I&=\liminf_{m\rightarrow\infty}\mu_m(\mathbb{R}^d)\geq\mu(\mathbb{R}^d)
				\geq K(\vb)+\sum_{j\in J}b_j
				\geq I\left[P(\vb)^{\frac{d-2}{d}}+\sum_{j\in J}a_j^{\frac{d-2}{d}}\right]\\
				&\geq I\left[P(\vb)+\sum_{j\in J}a_j\right]^{\frac{d-2}{d}}
				=I[\nu(\mathbb{R}^d)]^{\frac{d-2}{d}}
				=I,
			\end{split}
		\end{equation}
where we have used that $\lambda\mapsto \lambda^{\frac{d-2}{d}}$ is a strictly concave function. Notice that $a_j$ must vanishes for all $j\in J$. Indeed, if there exists some $a_{j_0}\neq 0$, then by \eqref{352} and \eqref{354} we have $\nu=a_{j_0}\delta_{x_{j0}}$ which implies
		\begin{equation}\label{357}
			1=\nu(\mathbb{R}^4)=a_{j_0}.
		\end{equation}
		However, the normalization \eqref{341} leads to
		$$
		\frac{1}{2}\geq\lim_{m\rightarrow\infty}\nu_m(B(x_{j_0},1))=\nu(B(x_{j_0},1))=\int_{B(x_{j_0},1)}d\nu=a_{j_0},
		$$
which contradicts the previous equality.Therefore, it must be the case that $\nu=F(\vb)dx$ which, by \eqref{352} implies $\vb\in\mathcal{N}$. To complete the proof, note that by the definition of $I$, the fact that $\vb\in\mathcal{N}$ and the lower semi-continuity of the weak convergence in \eqref{345}, we deduce that $K(\vb_m)=I$ and that $\vb_m\rightarrow\vb$ strongly in $\mathcal{N}$. 
	\end{proof}
	We are now in a position to prove Theorem \ref{ESGS}. 
	
	\begin{proof}[Proof of Theorem \ref{ESGS}]
		We begin by applying Theorem \ref{TEO311} to obtain a nontrivial minimizer $\vb$ of  \eqref{MinNorm}. By Lagrange’s multiplier theorem, there exists a constant $\lambda$ such that for any $\mathbf{g}\in\Dz$
		\begin{equation*}
			2\gamma_k \int \nabla v_k\cdot\nabla g_kdx= \lambda\int f_k(\vb)g_kdx,\quad k=1,...,l.
		\end{equation*}
Taking $\mathbf{g}=\vb$ and applying Lemma \ref{lemma22}-$(ii)$ we deduce that  $\lambda> 0$. Next, we define 
$$
\psib_0(x):=\left(\frac{\lambda}{2}\right)^{\frac{4}{d-2}}\vb(x)
$$ 
and verify that $\psib_0$ is indeed a ground state solution of \eqref{SIST3}. For each $k=1,...,l$, 
		\begin{equation*}
			\gamma_k\int\nabla \psi_{k0}\cdot\nabla g_kdx=\left(\frac{\lambda}{2}\right)^{\frac{4}{d-2}}\gamma_k\int\nabla v_k\cdot\nabla g_k=\int\left(\frac{\lambda}{2}\right)^{\frac{d+2}{d-2}} f_k(\vb)g_kdx=\int f_k(\psib_0)g_kdx,
		\end{equation*}
which shows $\psib_0$ solves \eqref{SIST3}. By the homogeneity of the functionals $P$ and $K$, it follows that $J(\psib_0)=J(\vb)$. Since $\vb$ minimizes \eqref{MinNorm}, it is also a minimizer of \eqref{Min}, and therefore, $\psib_0$ is a minimizer of \eqref{Min}. Applying Remark \ref{obs32}, we conclude that $\psib_0$ is a nontrivial ground state solution of \eqref{SIST3}.
	\end{proof}
	
As a corollary, we obtain the sharp constant for the inequality \eqref{SCI} restricted to the set $\mathcal{N}$.
	
	\begin{corollary}\label{corol314} The following inequality holds
		\begin{equation}\label{360}
			P(\ub)\leq C_{\rm opt}K(\ub)^{\frac{d}{d-2}},\quad \mbox{for all} \quad \ub\in\mathcal{N}
		\end{equation}
with the optimal constant given by
		\begin{equation}\label{optimal}
			C_{\rm opt}=\frac{1}{C_d}\left(\frac{2}{\mathcal{E}(\psib)}\right)^{\frac{2}{d-2}},
		\end{equation}
where $C_d=\frac{2}{d-2}d^{\frac{d}{d-2}}$ and $\psib$ is any ground state solution of \eqref{SIST3}.
	\end{corollary}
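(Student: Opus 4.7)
The inequality \eqref{360} for $\ub\in\mathcal{N}$ is, in fact, nothing more than a reformulation of the definition of $C_{\rm opt}$. Indeed, by \eqref{Min}, $C_{\rm opt}^{-1}\leq J(\ub)$ for every $\ub\in\mathcal{N}$, which, after rearranging using the definition \eqref{J} of $J$, yields exactly $P(\ub)\leq C_{\rm opt}K(\ub)^{\frac{d}{d-2}}$. So the only real content of the corollary is the explicit formula \eqref{optimal} for $C_{\rm opt}$ in terms of the ground state energy.

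The plan is therefore to evaluate $C_{\rm opt}^{-1}=\inf J$ by exhibiting a minimizer. Theorem \ref{TEO311} provides a minimizer $\vb$ of the equivalent problem \eqref{MinNorm}, and the proof of Theorem \ref{ESGS} shows that a suitable rescaling $\psib_0=(\lambda/2)^{4/(d-2)}\vb$ is a ground state solution of \eqref{SIST3} with $J(\psib_0)=J(\vb)=C_{\rm opt}^{-1}$ (here we use that $J$ is invariant under positive scalar multiplication, which is a direct consequence of the homogeneity of $K$ and $P$). Conversely, by Remark \ref{obs32}, every ground state $\psib\in\mathcal{G}$ minimizes the functional $J$, so $J(\psib)=C_{\rm opt}^{-1}$ for every $\psib\in\mathcal{G}$. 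In particular, the value of $\mathcal{E}(\psib)$ is the same for every ground state, which is what allows the statement to read ``$\psib$ is any ground state solution''.

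It then remains to turn $J(\psib)=C_{\rm opt}^{-1}$ into the explicit expression \eqref{optimal}. This is pure bookkeeping using Remark \ref{obs32}: combining $K(\psib)=\frac{2d}{d-2}P(\psib)$ with the definition of $J$ and of $S$ gives
\begin{equation*}
J(\psib)=\frac{K(\psib)^{\frac{d}{d-2}}}{P(\psib)}=\Bigl(\frac{2d}{d-2}\Bigr)^{\frac{d}{d-2}}P(\psib)^{\frac{2}{d-2}}=\frac{2}{d-2}d^{\frac{d}{d-2}}\,S(\psib)^{\frac{2}{d-2}}=C_d\Bigl(\frac{\mathcal{E}(\psib)}{2}\Bigr)^{\frac{2}{d-2}},
\end{equation*}
where in the last step we used $S(\psib)=\frac{1}{2}\mathcal{E}(\psib)$. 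Inverting this identity yields exactly \eqref{optimal}.

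There is essentially no obstacle here, as the whole statement is a direct consequence of the existence of a minimizer (Theorem \ref{TEO311}) together with the algebraic identities collected in Remark \ref{obs32}; the only point that deserves a short verification is that the quantity $\mathcal{E}(\psib)$ is independent of the particular ground state chosen, which, as noted above, follows from the fact that every element of $\mathcal{G}$ realizes the infimum defining $C_{\rm opt}$.
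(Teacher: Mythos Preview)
Your proof is correct and follows exactly the approach implicit in the paper: the inequality is the definition of $C_{\rm opt}$, and the explicit formula \eqref{optimal} is obtained by evaluating $J$ at a ground state using the identities in Remark \ref{obs32} together with $S(\psib)=\tfrac{1}{2}\mathcal{E}(\psib)$. The paper does not give a separate proof of this corollary, treating it as an immediate consequence of Theorem \ref{ESGS} and Remark \ref{obs32}, which is precisely what you have spelled out.
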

	\begin{remark}
The uniqueness of ground state solutions for \eqref{SIST3}, as far as we know, remains an open problem. However, since all ground states of \eqref{SIST3} share the same energy $\mathcal{E}$, the constant $C_{\rm opt}$ is independent of the particular ground state chosen.
	\end{remark}

	\section{Global well-posedness and scattering in $\Dz$}\label{secgwp}

		\subsection{Local theory}\label{seclocal}
		
The proof of Theorem \ref{MAIN} can be derived by adapting the strategy in \cite[Theorem 2.5]{KM} (see also \cite[Chapter 3]{killip3}), with straightforward adjustments for the vectorial setting, and is omitted here. Indeed, we can establish the following slightly more general result.
	\begin{proposition}\label{BCL}
	Let $3\leq d\leq 5$. Assume that \ref{H1} and \ref{H2} hold and let $t_0\in I$ an interval. For any $\mathbf{u}_0\in \dot{\mathbf{H}}^1(\mathbb{R}^d)$ with $\|\mathbf{u}_0\|_{\dot{\mathbf{H}}^1}\leq A$, there exists $\delta=\delta(A)>0$ such that, if 
$$
\Vert U_k(t-t_0)u_k(t_0)\Vert_{S(I)}\leq\delta, \quad \kl,
$$
there exists a unique corresponding solution $\mathbf{u}$ of \eqref{SISTR} in $I\times\R^d$ with $\mathbf{u}\in C(I;\dot{\mathbf{H}}^1(\R^d))$ and
	$$
	\Vert \nabla \ub\Vert_{\Xb(I)}<\infty,\quad \Vert \ub\Vert_{\Sb(I)}\leq 2\delta.
	$$
	\end{proposition}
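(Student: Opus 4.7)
Following the standard approach for the scalar energy-critical NLS (see, e.g., \cite[Chapter 3]{killip3} or \cite[Theorem 2.5]{KM}), the plan is to apply the Banach fixed-point theorem to the Duhamel operator
$$
\Phi(\ub)_k(t):=U_k(t-t_0)u_k(t_0)+\frac{i}{\alpha_k}\int_{t_0}^t U_k(t-s)f_k(\ub(s))\,ds,\qquad \kl,
$$
on the closed set
$$
B:=\bigl\{\ub\in C(I;\Dz):\,\Vert\ub\Vert_{\Sb(I)}\leq 2\delta,\ \Vert\nabla\ub\Vert_{\Xb(I)}\leq 2C_*A\bigr\},
$$
endowed with the metric $d(\ub,\vb):=\Vert\ub-\vb\Vert_{\Xb(I)}$, where $C_*$ is the Strichartz constant from Lemma \ref{strichartz} and Remark \ref{remstri}. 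The crucial fact is that the pair $\bigl(\frac{2(d+2)}{d-2},\frac{2d(d+2)}{d^2+4}\bigr)$ defining $\Xb(I)$ is admissible in the sense of Definition \ref{Adpair}, as noted in the excerpt, so that Strichartz's inequalities are applicable and yield $\sumk\Vert U_k(t-t_0)\nabla u_{k}(t_0)\Vert_{\Xb(I)}\leq C_*A$.

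The two nonlinear ingredients are obtained by combining Lemma \ref{consequences} with Hölder's inequality at exponents $p=\frac{2(d+2)}{d-2}$ and $q=\frac{2d(d+2)}{d^2+4}$. Part (iii) yields the mapping estimate
$$
\Vert\nabla f_k(\ub)\Vert_{L_t^2L_x^{2d/(d+2)}(I\times\R^d)}\lesssim \Vert\ub\Vert_{\Sb(I)}^{\frac{4}{d-2}}\Vert\nabla\ub\Vert_{\Xb(I)},
$$
while the pointwise bound in part (i) yields the difference estimate
$$
\Vert f_k(\ub)-f_k(\vb)\Vert_{L_t^2L_x^{2d/(d+2)}}\lesssim \bigl(\Vert\ub\Vert_{\Sb(I)}+\Vert\vb\Vert_{\Sb(I)}\bigr)^{\frac{4}{d-2}}\Vert\ub-\vb\Vert_{\Xb(I)}.
$$
Combining these with the inhomogeneous Strichartz estimate dualised against the endpoint admissible pair $\bigl(2,\tfrac{2d}{d-2}\bigr)$, the commutation $\nabla U_k=U_k\nabla$, and the Sobolev embedding \eqref{SXnorms}, one obtains for $\ub,\vb\in B$ bounds of the schematic form
$$
\Vert\Phi(\ub)\Vert_{\Sb(I)}\leq l\delta+C\delta^{\frac{4}{d-2}}C_*A,\qquad \Vert\nabla\Phi(\ub)\Vert_{\Xb(I)}\leq C_*A+C\delta^{\frac{4}{d-2}}C_*A,
$$
together with the contraction estimate $d(\Phi(\ub),\Phi(\vb))\leq C\delta^{\frac{4}{d-2}}d(\ub,\vb)$.

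Since $\frac{4}{d-2}>1$ for every $3\leq d\leq 5$, one may now choose $\delta=\delta(A)>0$ small enough that each of the above constants is strictly smaller than its target, so that $\Phi(B)\subset B$ and $\Phi$ is a strict contraction on $(B,d)$. The Banach fixed-point theorem then produces the unique fixed point $\ub\in B$ of \eqref{defsol}, and the continuity $t\mapsto \ub(t)\in\Dz$ follows from a final Strichartz estimate applied to the difference $\ub(t)-\ub(t')$. The main structural point, and the sole reason for the restriction $3\leq d\leq 5$, is precisely this superlinearity $\frac{4}{d-2}>1$: it is what allows the small parameter $\delta$ to absorb the potentially large factor $C_*A$ when closing the iteration. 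The only technical difficulty is the routine but intricate bookkeeping of Hölder exponents needed to place the nonlinear terms in the dual of an admissible Strichartz pair; no new ideas beyond those in \cite{CAZW,KM} are required.
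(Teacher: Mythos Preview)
Your proposal is correct and follows exactly the route the paper itself points to: the paper omits the proof of Proposition \ref{BCL} entirely, remarking only that it ``can be derived by adapting the strategy in \cite[Theorem 2.5]{KM} (see also \cite[Chapter 3]{killip3}), with straightforward adjustments for the vectorial setting,'' which is precisely the contraction-mapping argument you outline. The only cosmetic point is the factor $l$ in your linear bound $l\delta$ versus the stated target $2\delta$; this is a harmless normalization ambiguity in the paper's statement (sum versus component norms on $\Sb$) and is resolved by adjusting the ball radius accordingly.
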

An immediate consequence, in view of the Sobolev embedding and Strichartz estimates, is the global existence for sufficiently small initial data.
	\begin{corollary}[Small data theory]\label{smalldata} There exits $\delta_0>0$ such that if $\Vert \nabla \mathbf{u}_0\Vert_{\mathbf{L}^2}\leq \delta_0$  then the solution $\mathbf{u}$ to the system \eqref{SISTR} obtained in Proposition \ref{BCL} can be extended to any interval $I$.
	\end{corollary}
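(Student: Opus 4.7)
The plan is to combine the Strichartz estimate with the Sobolev embedding \eqref{SXnorms} to show that the free evolution of $\mathbf{u}_0$ is globally small in the $\mathbf{S}(\mathbb{R})$ norm whenever $\|\nabla \mathbf{u}_0\|_{\mathbf{L}^2}$ is small, and then to invoke Proposition \ref{BCL} with $I=\mathbb{R}$ and $t_0=0$. Restricting the resulting global solution to any interval yields the claim.

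First I would note that the pair $\bigl(\tfrac{2(d+2)}{d-2},\tfrac{2d(d+2)}{d^2+4}\bigr)$ defining the $\mathbf{X}$-norm is admissible in the sense of Definition \ref{Adpair}, as is already pointed out in the paragraph preceding Theorem \ref{LWP}. Applying, for each $k=1,\ldots,l$, the Sobolev embedding \eqref{SXnorms} to the function $U_k(\cdot)u_{k0}$ and then the Strichartz estimate (Lemma \ref{strichartz}(i) together with Remark \ref{remstri}), I would obtain
\begin{equation*}
\|U_k(\cdot)u_{k0}\|_{L^{\frac{2(d+2)}{d-2}}_t L^{\frac{2(d+2)}{d-2}}_x(\mathbb{R}\times\mathbb{R}^d)} \lesssim \|\nabla U_k(\cdot)u_{k0}\|_{L^{\frac{2(d+2)}{d-2}}_t L^{\frac{2d(d+2)}{d^2+4}}_x(\mathbb{R}\times\mathbb{R}^d)} \lesssim \|\nabla u_{k0}\|_{L^2}.
\end{equation*}
Let $C_0>0$ denote the implicit constant above and let $\delta(A)$ be the threshold produced by Proposition \ref{BCL}.

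The plan is then to pick $\delta_0>0$ small enough so that, with $A:=\delta_0$, one has $C_0\delta_0 \leq \delta(\delta_0)$; such a choice is possible since the constants are fixed. Assuming $\|\nabla \mathbf{u}_0\|_{\mathbf{L}^2}\leq \delta_0$, the displayed bound yields $\|U_k(t)u_{k0}\|_{S(\mathbb{R})}\leq \delta(\delta_0)$ for every $k=1,\ldots,l$. Proposition \ref{BCL} applied on $I=\mathbb{R}$ then produces a unique solution $\mathbf{u}\in C(\mathbb{R};\dot{\mathbf{H}}^1(\mathbb{R}^d))$ to \eqref{SISTR} with $\|\nabla\mathbf{u}\|_{\mathbf{X}(\mathbb{R})}<\infty$ and $\|\mathbf{u}\|_{\mathbf{S}(\mathbb{R})}\leq 2\delta(\delta_0)$. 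Restricting this global solution to an arbitrary interval $I\subset\mathbb{R}$ gives the desired extension.

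There is no substantive obstacle here: the proof is the standard small-data reduction, and the only point requiring care is the verification that the $\mathbf{X}$-norm pair is Strichartz-admissible so that the free evolution $U_k(t)u_{k0}$ is globally controlled by $\|\nabla u_{k0}\|_{L^2}$ in the $\mathbf{S}(\mathbb{R})$ topology. This is ensured by Definition \ref{Adpair} and the Sobolev embedding recorded in \eqref{SXnorms}.
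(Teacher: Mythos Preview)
Your proposal is correct and follows exactly the approach indicated by the paper, which simply observes that the corollary is ``an immediate consequence, in view of the Sobolev embedding and Strichartz estimates.'' You have merely spelled out those two steps and then invoked Proposition~\ref{BCL} on $I=\mathbb{R}$, which is precisely what is intended.
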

	
We also say that a solution $\mathbf{u}$ of \eqref{SISTR} given by Proposition \ref{BCL} blows-up forward in time if 
			$$
			\Vert \mathbf{u}\Vert_{\Sb([t_0, \sup I))}=\infty,
			$$
			and $\mathbf{u}$ blows-up backward in time, if 
			$$
			\Vert \mathbf{u}\Vert_{\Sb((\inf I, t_0])}=\infty.	
			$$
We say that $\mathbf{u}$ blows-up in finite time, if it blows-up both forward and backward in time. The following criterion is based on \cite[Lemma 2.11]{KM}.
	\begin{proposition}\label{standblow}[Standard finite blow-up criterion] Let $\mathbf{u}$ be the maximal-lifespan solution defined on interval $I=(-T^-, T^+)$ with $t_0\in I$ and initial data $\mathbf{u}(t_0)=\mathbf{u}_0$. If $T^+<\infty$ ($T^-<\infty$), then $\mathbf{u}$ blows-up forward (backwards) in time. 
	\end{proposition}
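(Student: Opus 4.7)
The plan is to argue by contrapositive: assuming $T^{+}<\infty$ but $\|\mathbf{u}\|_{\mathbf{S}([t_0,T^{+}))}<\infty$, we will extend $\mathbf{u}$ past $T^{+}$, contradicting maximality. The argument is the standard adaptation of the Kenig-Merle/Cazenave blow-up alternative, carried out for the system using the Strichartz machinery of Lemma \ref{strichartz} (via Remark \ref{remstri}), the nonlinear estimate of Lemma \ref{consequences}(iii), and the small-data local result Proposition \ref{BCL}.

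First I would promote the finiteness of the $\mathbf{S}$-norm to an $\dot{\mathbf{H}}^1$-bound. Given $\eta>0$ to be chosen, partition $[t_0,T^{+})$ into finitely many consecutive subintervals $I_j=[t_j,t_{j+1}]$ with $\|\mathbf{u}\|_{\mathbf{S}(I_j)}\leq \eta$. On each $I_j$, applying $\nabla$ to the Duhamel formula \eqref{defsol} and using Strichartz estimates with the admissible pair associated to $\mathbf{X}$, together with Lemma \ref{consequences}(iii) (with the exponent relation $\tfrac{1}{r'}=\tfrac{4}{d-2}\cdot \tfrac{d-2}{2(d+2)}+\tfrac{d^2+4}{2d(d+2)}$ for the dual Strichartz space), yields
\begin{equation*}
\|\nabla \mathbf{u}\|_{\mathbf{X}(I_j)} \lesssim \|\nabla \mathbf{u}(t_j)\|_{\mathbf{L}^2} + \|\mathbf{u}\|_{\mathbf{S}(I_j)}^{\frac{4}{d-2}}\|\nabla \mathbf{u}\|_{\mathbf{X}(I_j)} \lesssim \|\nabla \mathbf{u}(t_j)\|_{\mathbf{L}^2} + \eta^{\frac{4}{d-2}}\|\nabla \mathbf{u}\|_{\mathbf{X}(I_j)}.
\end{equation*}
Fixing $\eta$ small enough, a standard bootstrap absorbs the last term on the left, giving $\|\nabla \mathbf{u}\|_{\mathbf{X}(I_j)}\lesssim \|\nabla \mathbf{u}(t_j)\|_{\mathbf{L}^2}$. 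Re-running the Strichartz inequality in the $L^\infty_t \dot H^1_x$ pair then provides a constant $C$ such that $\|\nabla \mathbf{u}(t_{j+1})\|_{\mathbf{L}^2}\leq C\|\nabla \mathbf{u}(t_j)\|_{\mathbf{L}^2}$. Iterating over the finite collection of intervals shows $\sup_{t\in[t_0,T^{+})}\|\nabla \mathbf{u}(t)\|_{\mathbf{L}^2}<\infty$ and $\|\nabla \mathbf{u}\|_{\mathbf{X}([t_0,T^{+}))}<\infty$.

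Now I would use the absolute continuity of the $\mathbf{S}$-norm to set up the extension. Since $\|\mathbf{u}\|_{\mathbf{S}([t_0,T^{+}))}<\infty$, for every $\eta'>0$ there exists $t^\ast<T^{+}$ with $\|\mathbf{u}\|_{\mathbf{S}([t^\ast,T^{+}))}<\eta'$. Pick $A>\sup_{t\in[t_0,T^{+})}\|\nabla \mathbf{u}(t)\|_{\mathbf{L}^2}$ and let $\delta=\delta(A)$ be given by Proposition \ref{BCL}. Writing the Duhamel formula on $[t^\ast,T^{+}+\varepsilon)$ (for $\varepsilon>0$ to be chosen),
\begin{equation*}
U_k(t-t^\ast)u_k(t^\ast)=u_k(t)-i\int_{t^\ast}^{t}U_k(t-s)\tfrac{1}{\alpha_k}f_k(\mathbf{u})\,ds \quad \text{for } t\in[t^\ast,T^{+}),
\end{equation*}
and combining Strichartz with Lemma \ref{consequences}(iii) and the Sobolev embedding \eqref{SXnorms}, the linear evolution $U_k(\cdot-t^\ast)u_k(t^\ast)$ belongs to $L^{2(d+2)/(d-2)}_{t,x}(\mathbb{R}\times\mathbb{R}^d)$. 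By dominated convergence (in $t$), one may choose $\varepsilon>0$ and $t^\ast$ close enough to $T^{+}$ so that $\|U_k(t-t^\ast)u_k(t^\ast)\|_{S([t^\ast,T^{+}+\varepsilon))}<\delta$ for every $k=1,\dots,l$.

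Applying Proposition \ref{BCL} with initial time $t^\ast$ on the interval $[t^\ast,T^{+}+\varepsilon)$, we obtain a solution there which coincides with $\mathbf{u}$ on $[t^\ast,T^{+})$ by the uniqueness clause, thereby extending $\mathbf{u}$ past $T^{+}$. This contradicts the maximality of the lifespan, establishing the forward claim; the backward case is identical after reversing time. The main technical obstacle is the Strichartz bootstrap of the first paragraph—keeping track of the admissible exponents so that Lemma \ref{consequences}(iii) dualizes correctly into the $\mathbf{X}$-norm used in Proposition \ref{BCL}—while the remainder of the argument is the routine extension mechanism.
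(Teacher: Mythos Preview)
The paper does not actually supply a proof of this proposition; it simply states the result with the remark ``The following criterion is based on \cite[Lemma 2.11]{KM}'' and moves on. Your argument is precisely the standard Kenig--Merle blow-up alternative adapted to the system setting, and the steps are correct: the finite-$\mathbf{S}$-norm partition plus Strichartz/Lemma~\ref{consequences}(iii) bootstrap to control $\|\nabla\mathbf{u}\|_{\mathbf{X}}$ and $\sup_t\|\nabla\mathbf{u}(t)\|_{\mathbf{L}^2}$, followed by the smallness-of-linear-evolution extension via Proposition~\ref{BCL}. This is exactly the content of \cite[Lemma~2.11]{KM} that the paper is invoking, so your proposal fills in what the authors left to the reference.
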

	
	\begin{definition}[Non-linear profile]\label{non-linear profile}Let $\vb_0\in\Dz$, $\vb=\mathbf{U}(t)\vb_0$ and $(t_n)$ to be a sequence with $\lim t_n=\tilde{t}\in[-\infty,\infty]$. We say that $\ub(t,x)$ is a non-linear profile associated to $(\vb_0,(t_n))$ if there exists an interval $I$ containing $\tilde{t}$ such that $\ub$ is a solution of \eqref{SISTR} in $I$ and 
		$$
		\lim_{n\rightarrow\infty}\Vert \ub(t_n,\cdot)-\vb(t_n,\cdot)\Vert_{\Dz}=0.
		$$
		
	\end{definition}
	There is always a non-linear profile associated to $(\vb_0,(t_n))$. Also, if $\ub^1$ and $\ub^2$ are both non-linear profiles associated to $(\vb_0,(t_n))$ in an interval $I$, then $\ub^1\equiv \ub^2$ on $I$. For details, see \cite[Remark 2.13]{KM}.

	The following lemma establishes a stability result. Here, we present a modified version of the result from \cite{KM}, which is sufficient for our needs. A more general result for the NLS equation can be found in \cite{tao}.
	\begin{lemma}\label{stability}[Long time perturbation]
		Let $I$ be a compact interval containing $t_0$ and $\mathbf{v}:I\times \mathbb{R}^6\rightarrow\mathbb{C}^l$ be an approximate solution of \eqref{SISTR} in the sense that
		$$
		i\alpha_k\partial_t v_k+\gamma_k\Delta v_k + f_k(\mathbf{v}) =e_k, \quad  k=1,\ldots,l, \quad (t,x)\in I\times\R^d,
		$$
		for some function $\mathbf{e}=(e_1,...,e_l)$. Assume that
		\begin{equation}\label{v54}
			\Vert \mathbf{v} \Vert_{\mathbf{L}^\infty_t \dot{\mathbf{H}}_x^1}\leq A,
		\end{equation}
		\begin{equation}\label{v55}
			\Vert\mathbf{v}\Vert_{\mathbf{S}(I)}\leq M,
		\end{equation}
		where $A, M$ are positive constants. Let $\mathbf{u}_0\in\dot{\mathbf{H}}^1(\mathbb{R}^d)$. Assume also that
		\begin{equation}\label{v56}
			\Vert \mathbf{u}_0-\mathbf{v}(t_0)\Vert_{\dot{\mathbf{H}}_x^1}\leq A',
		\end{equation}
		\begin{equation}\label{v57}
			\Vert \nabla \mathbf{e} \Vert_{\mathbf{L}_t^2 \mathbf{L}_x^{\frac{2d}{d+2}}}\leq \epsilon,
		\end{equation}
		\begin{equation}\label{evolution}
			\Vert	\mathbf{U}(t-t_0)[\mathbf{u}_0-\mathbf{v}(t_0)]\Vert_{\mathbf{S}(I)}\leq\epsilon.
		\end{equation}
		Then, there exists $\epsilon_0=\epsilon_0(M,A,A',d)$ and a unique solution $\mathbf{u}:I\times\mathbb{R}^d\rightarrow\mathbb{C}^l$ to \eqref{SISTR} with initial data $\mathbf{u}(t_0,x)=\mathbf{u}_0$ such that for $0<\epsilon<\epsilon_0$, we have
		\begin{equation}\label{v100}
			\Vert \mathbf{u}\Vert_{\mathbf{S}(I)}\leq C(M,A,A',d)
		\end{equation}
		\begin{equation}\label{v101}
			\Vert  \mathbf{u}(t)-\mathbf{v}(t)\Vert_{\dot{\mathbf{H}}_x^1}\leq C(A,A',M,d) (A' +\epsilon), \quad \mbox{for all} \quad t\in I.
		\end{equation}
	\end{lemma}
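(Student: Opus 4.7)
The plan is to treat $\mathbf{u}$ as a perturbation of the approximate solution $\mathbf{v}$ and to close a continuity argument on a finite partition of $I$ chosen so that $\mathbf{v}$ is small on each piece. Put $\mathbf{w}=\mathbf{u}-\mathbf{v}$; subtracting the two equations gives $i\alpha_k\partial_tw_k+\gamma_k\Delta w_k=-[f_k(\mathbf{v}+\mathbf{w})-f_k(\mathbf{v})]-e_k$ with $w_k(t_0)=u_{k0}-v_k(t_0)$. Using \eqref{v55}, I split $I=\bigcup_{j=1}^{N}I_j$ into $N=N(M,\eta)$ consecutive subintervals $I_j=[t_j,t_{j+1}]$ with $\|\mathbf{v}\|_{\mathbf{S}(I_j)}\le\eta$, where $\eta>0$ is a small constant depending on $A$ and $M$ that will be fixed at the end. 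The argument then proceeds inductively on $j$; existence of $\mathbf{u}$ itself on each $I_j$ is supplied by the local theory of Proposition~\ref{BCL}, continued past every endpoint via the blow-up criterion of Proposition~\ref{standblow}, once the a priori bound $\|\mathbf{u}\|_{\mathbf{S}(I_j)}\le M+\|\mathbf{w}\|_{\mathbf{S}(I_j)}$ is shown to be finite.

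On each subinterval, Strichartz estimates (Lemma~\ref{strichartz} and Remark~\ref{remstri}) applied to the Duhamel formula for $\mathbf{w}$ anchored at $t_j$ yield
\[
\Psi(I_j):=\|\mathbf{w}\|_{\mathbf{S}(I_j)}+\|\nabla\mathbf{w}\|_{\mathbf{X}(I_j)}\lesssim \|\mathbf{U}(t-t_j)\mathbf{w}(t_j)\|_{\mathbf{S}(I_j)}+\|\nabla\mathbf{w}(t_j)\|_{\mathbf{L}^2}+\mathcal{N}_j+\varepsilon,
\]
where $\mathcal{N}_j:=\bigl\|\nabla\bigl[f_k(\mathbf{v}+\mathbf{w})-f_k(\mathbf{v})\bigr]\bigr\|_{\mathbf{L}^2_t\mathbf{L}^{2d/(d+2)}_x(I_j)}$. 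Combining Lemma~\ref{consequences}(ii)--(iii) with Hölder in time (and the admissibility of the pair defining $\mathbf{X}(I)$) gives
\[
\mathcal{N}_j\lesssim (\eta+\|\mathbf{w}\|_{\mathbf{S}(I_j)})^{\frac{4}{d-2}}\|\nabla\mathbf{w}\|_{\mathbf{X}(I_j)}+\Bigl((\eta+\|\mathbf{w}\|_{\mathbf{S}(I_j)})^{\frac{6-d}{d-2}}+\eta^{\frac{6-d}{d-2}}\Bigr)\|\mathbf{w}\|_{\mathbf{S}(I_j)}\|\nabla\mathbf{v}\|_{\mathbf{X}(I)},
\]
and another Strichartz estimate applied directly to the equation for $\mathbf{v}$, using \eqref{v54}, \eqref{v55}, \eqref{v57}, bounds $\|\nabla\mathbf{v}\|_{\mathbf{X}(I)}$ by some $\tilde C(A,M)$. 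Choosing $\eta=\eta(A,M)$ small enough absorbs the contribution $(\eta+\Psi(I_j))^{4/(d-2)}\|\nabla\mathbf{w}\|_{\mathbf{X}(I_j)}$ into the left side and, together with \eqref{evolution} and $\|\nabla\mathbf{w}(t_0)\|_{\mathbf{L}^2}\le A'$, reduces the estimate on $I_1$ to the closed inequality $\Psi(I_1)\le c_0(A'+\varepsilon)+c_0\Psi(I_1)^{1+\frac{4}{d-2}}$, which by a standard continuity (bootstrap) argument forces $\Psi(I_1)\lesssim A'+\varepsilon$ provided $A'+\varepsilon$ is below a threshold that depends only on $A,M,d$.

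To propagate these bounds across the remaining $N-1$ subintervals I use the dual Strichartz estimate at $t_{j+1}$: both $\|\mathbf{U}(t-t_{j+1})\mathbf{w}(t_{j+1})\|_{\mathbf{S}(I_{j+1})}$ and $\|\nabla\mathbf{w}(t_{j+1})\|_{\mathbf{L}^2}$ are controlled by the corresponding quantities at $t_j$ plus $\mathcal{N}_j+\varepsilon$, which is already bounded by a constant multiple of $\Psi(I_j)+\varepsilon$. Thus the two input thresholds in the induction step grow at most geometrically in $j$, and after $N$ iterations one arrives at the uniform bounds \eqref{v100} and \eqref{v101} with $C(M,A,A',d)$ and $\varepsilon_0=\varepsilon_0(M,A,A',d)$ chosen so that the accumulated error at every step still satisfies the smallness threshold required by the continuity argument on that step. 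Uniqueness is built into Proposition~\ref{BCL}.

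The main technical obstacle is the second, sub-Lipschitz term in Lemma~\ref{consequences}(ii): for $3\le d\le 5$ the exponent $\frac{6-d}{d-2}$ lies in $(0,3]$, and for $d=4,5$ it is strictly less than one, so this term cannot be absorbed by contraction alone. I handle it by extracting the factor $\|\mathbf{v}\|_{\mathbf{S}(I_j)}^{(6-d)/(d-2)}\le\eta^{(6-d)/(d-2)}$, which yields a prefactor that vanishes with $\eta$ and therefore becomes absorbable once $\eta$ is chosen small depending on the a priori control $\|\nabla\mathbf{v}\|_{\mathbf{X}(I)}\le\tilde C(A,M)$. This is precisely what forces $N$, and hence the final constants $\varepsilon_0$ and $C(M,A,A',d)$, to depend on $M$ through $N(M,\eta(A,M))$; apart from this delicate choice, the remainder of the argument is standard bookkeeping.
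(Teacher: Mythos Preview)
Your overall architecture matches the paper's: derive the Duhamel equation for $\mathbf{w}=\mathbf{u}-\mathbf{v}$, partition $I$ into finitely many pieces on which $\mathbf{v}$ is small, run a continuity argument on each piece, and iterate. There is, however, a genuine gap in the way you package the bootstrap quantity.

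By lumping $\|\mathbf{w}\|_{\mathbf{S}(I_j)}$ and $\|\nabla\mathbf{w}\|_{\mathbf{X}(I_j)}$ into a single $\Psi(I_j)$, you arrive at the closed inequality $\Psi(I_1)\le c_0(A'+\varepsilon)+c_0\Psi(I_1)^{1+4/(d-2)}$ and then assert that it closes ``provided $A'+\varepsilon$ is below a threshold that depends only on $A,M,d$''. But the hypotheses of the lemma place \emph{no} smallness restriction on $A'$; only $\varepsilon$ is small, and $\varepsilon_0$ is allowed to depend on $A'$. When $A'$ is large your scalar bootstrap simply does not close: the constant term $c_0 A'$ pushes the first root of $x\mapsto c_0(A'+\varepsilon)+c_0 x^{1+4/(d-2)}-x$ beyond the range where the continuity argument traps $\Psi$. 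The same difficulty recurs (and compounds geometrically, as you note) at every later step.

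The paper avoids this by keeping the two norms separate. From the Duhamel formula one has $\|\mathbf{w}\|_{\mathbf{S}}\lesssim \varepsilon+B$ but $\|\nabla\mathbf{w}\|_{\mathbf{X}}\lesssim A'+\varepsilon+B$, where $B=\|\nabla[f(\mathbf{u})-f(\mathbf{v})]\|_{L^2_tL^{2d/(d+2)}_x}$. The crucial structural point is that in the nonlinear estimate the large factor $\|\nabla\mathbf{w}\|_{\mathbf{X}}\sim A'$ is always multiplied by $(\|\mathbf{w}\|_{\mathbf{S}}+\|\mathbf{v}\|_{\mathbf{S}})^{4/(d-2)}$, which stays of order $(B+\varepsilon+\eta)^{4/(d-2)}$ and is therefore small. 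One then bootstraps on $B$ alone, obtaining $B\lesssim \varepsilon$ on each piece, with $\eta$ (hence $N$) chosen small depending on $A'$ as well as $A,M,d$. Only after that does one read off $\|\nabla\mathbf{w}\|_{\mathbf{X}}\lesssim A'+\varepsilon$. Your lumped $\Psi$ destroys exactly this separation. A secondary, harmless difference is that the paper partitions on $\|\nabla\mathbf{v}\|_{\mathbf{X}(I_j)}\le\eta$ (which, via \eqref{SXnorms}, also forces $\|\mathbf{v}\|_{\mathbf{S}(I_j)}$ small), whereas you partition on the $\mathbf{S}$-norm and invoke only the global bound $\|\nabla\mathbf{v}\|_{\mathbf{X}(I)}\le\tilde C(A,M)$; either works once the two $\mathbf{w}$-norms are decoupled.
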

	\begin{proof}
We follow the proof of \cite[Theorem 5.3]{KTV14}. First note that assumption \eqref{v55} implies that $\Vert \nabla \mathbf{v} \Vert_{\Xb(I)}$ is finite. Indeed, for $\zeta=\zeta(d)$, \eqref{v55} allow us to split the interval $I$ into $N_0=N_0(M,\zeta)$ intervals $J_j$ so that $\Vert \mathbf{v}\Vert_{\mathbf{S}(J_j)}\leq\zeta$. Therefore, using Strichartz's estimates, Lemma \ref{consequences} and \eqref{v54}, we have, for $k=1,\ldots,l$, 
	\begin{equation}\label{vwj}
		\begin{split}
			\Vert \nabla v_k \Vert_{{W}(J_j)} &\lesssim \Vert v_k\Vert_{L_t^\infty \dot{H}_x^1(I\times\mathbb{R}^d)}+\Vert \nabla f_k(\mathbf{v})\Vert_{L_t^2 L_x^{\frac{2d}{d+2}}(J_j\times\mathbb{R}^6)}+\Vert \nabla e_k\Vert_{L_t^2 L_x^{\frac{2d}{d+2}}(I\times\mathbb{R}^d)}\\
			&\lesssim A+ \Vert \mathbf{v} \Vert_{\mathbf{S}(J_j)}^{\frac{4}{d-2}}\Vert \nabla\mathbf{v} \Vert_{\Xb(J_j)} +\epsilon\\
			&\lesssim A+\zeta^{\frac{4}{d-2}}\Vert \nabla\mathbf{v} \Vert_{\Xb(J_j)}+\epsilon. 
		\end{split}
	\end{equation}
Thus, taking $\zeta$ small enough, and summing this bound over all the intervals $J_j$, we obtain $\Vert \nabla \mathbf{v} \Vert_{\Xb(I)}\leq C(A,M)$. The previous bound now implies that for $\eta=\eta(d)$ small to be chosen later, we can divide the interval $I$ into $N_1=N_1(A,M,\eta)$ intervals $I_j$ so that
\begin{equation}\label{vdelta}
		\Vert \nabla \mathbf{v} \Vert_{\Xb(I_j)}\leq\eta.	
	\end{equation}

We may assume, without loss of generality, that $t_0 = \inf I$. Next write $\mathbf{w}=\mathbf{u} - \mathbf{v}$. By the well-posedness theory, the solution $\mathbf{u}$ is already known to exist. To establish the bounds \eqref{v100}-\eqref{v101}, we consider the integral formulation
	\begin{equation}\label{wint}
		w_k(t)=U_k(t-t_0) w_k(t_0)+i\int_{t_0}^tU_k(t-s)[f_k(\mathbf{u})-f_k(\mathbf{v})+e_k]ds,\quad k=1,\ldots,l.
	\end{equation}
Set
$$
B_k(t):=\Vert \nabla[f_k(\ub)-f_k(\vb)]\Vert_{L_t^{2}L_x^{\frac{2d}{d+2}}([t_0,t]\times\R^d)}\quad \mbox{and}\quad B(t)=\sumk B_k(t).
$$	
Let $I_n=[a_n,a_{n+1}]$, with $a_1=t_0$. Using the integral equation \eqref{wint}, Sobolev embbeding, Strichartz's inequality and \eqref{v57} we deduce
		\begin{equation}\label{wk1}
			\begin{split}
				\Vert w_k \Vert_{S([t_0,t])}&\lesssim \Vert	U_k(t-t_0) w_k(t_0)\Vert_{{S}([t_0,t])}+B_k(t) + \Vert \nabla
				e_k \Vert_{L_{t}^2L_x^{\frac{2d}{d+2}}(I\times\mathbb{R}^d)}
				\\
				&\lesssim B_k(t)+\epsilon
			\end{split}
		\end{equation}
and
\begin{equation}\label{wk2}
			\begin{split}
				\Vert \nabla w_k \Vert_{X([t_0,t])}&\lesssim \Vert w_k(t_0)\Vert_{\dot{H}^1_x}+B_k(t) + \Vert \nabla
				e_k \Vert_{L_{t}^2L_x^{\frac{2d}{d+2}}(I\times\mathbb{R}^d)}
				\\
				&\lesssim A'+ B_k(t)+\epsilon.
			\end{split}
		\end{equation}
Using Lemma \ref{consequences}, \eqref{vdelta} and the last two inequalities, we get
		\begin{equation}\label{tese414}
		\begin{split}
			B_k(t)&\lesssim \Vert \nabla v_k \Vert_{X([t_0,t])}\Vert w_k \Vert_{S([t_0,t])} (\Vert w_k \Vert_{S([t_0,t])}+\Vert v_k \Vert_{S([t_0,t])})^{\frac{6-d}{d-2}}\\
			&\quad +\Vert \nabla w_k \Vert_{X([t_0,t])}(\Vert w_k \Vert_{S([t_0,t])}+\Vert v_k \Vert_{S([t_0,t])})^{\frac{4}{d-2}}\\
			&\lesssim \eta[B(t)+\epsilon][B(t)+\epsilon+\eta]^{\frac{6-d}{d-2}}+[B(t)+\epsilon+A'][B(t)+\epsilon+\eta]^{\frac{4}{d-2}}.
			\end{split}
		\end{equation}
		Summing over $k$ and taking $\epsilon$ and $\eta$ small enough, depending  on the dimension $d$ and $2cA'$ for some $c>1$ related to the Strichartz estimates from Lemma \ref{strichartz} (it is necessary to increase $A'$ since the $\dot{H}^1$ norm will grow over time in the interaction argument), a continuity argument implies
		\begin{equation}\label{v513}
			B(t_1)\leq C_1\epsilon.
		\end{equation}
		
From \eqref{wk1} and \eqref{wk2}, we also obtain
$$
\Vert \mathbf{w} \Vert_{\mathbf{S}(I_1)}\leq C_1  \epsilon \quad \mbox{and} \quad \Vert \nabla \wb \Vert_{\Xb(I_1)}\leq  cA'+C_1 \epsilon. 
$$
		
Next, we obtain similar estimates on the interval $I_2 = [a_2, a_3]$.  To this end, we apply the integral equation \eqref{wint} at $t = a_2$, and use the identity $\mathbf{U}(t - a_2)\mathbf{U}(a_2 - t_0)\wb(t_0) = \mathbf{U}(t - t_0)\wb(t_0)$ together with \eqref{evolution} to deduce that
	\begin{equation}\label{Idois}
		\begin{split}
			\Vert \mathbf{U}(t-a_2)\wb(a_2)\Vert_{\Sb([a_2,s])}&\lesssim 	\Vert\mathbf{U}(t-t_0)\wb(t_0)\Vert_{\Sb([a_2,s])} + \sumk\Vert \nabla [f_k(\vb)-f_k(\ub)]\Vert_{L_t^2L_x^{\frac{2d}{d+2}}([a_2,s]\times\R^d)}\\
			&\quad  +\Vert \nabla\mathbf{e}\Vert_{\Lb_t^2\Lb_x^{\frac{2d}{d+2}}}\\
		&\lesssim \sumk\Vert \nabla [f_k(\vb)-f_k(\ub)]\Vert_{L_t^2L_x^{\frac{2d}{d+2}}([a_2, s] \times\R^d)} + \epsilon.
	\end{split}
	\end{equation}
The quantity $\sumk\Vert \nabla [f_k(\vb)-f_k(\ub)]\Vert_{L_t^2L_x^{\frac{2d}{d+2}}([a_2, s] \times \mathbb{R}^d)}$ can therefore be controlled, as before, via a continuity argument. As a consequence, we obtain
$$
\sumk\Vert \nabla [f_k(\vb)-f_k(\ub)]\Vert_{L_t^2L_x^{\frac{2d}{d+2}}(I_2 \times \mathbb{R}^d)}\leq C_2\epsilon,
$$
which implies
$$
\Vert \mathbf{w} \Vert_{\mathbf{S}(I_2)}\leq C_2  \epsilon \quad \mbox{and} \quad\Vert \nabla \wb \Vert_{\Xb(I_2)}\leq  cA'+C_1 \epsilon+C_2 \epsilon. 
$$
By iterating this procedure, we get
$$
\sumk\Vert \nabla [f_k(\vb)-f_k(\ub)]\Vert_{L_t^2L_x^{\frac{2d}{d+2}}(I_n \times \mathbb{R}^d)}\leq C_n\epsilon,
$$
and
\begin{equation}\label{wk111}
\Vert \mathbf{w} \Vert_{\mathbf{S}(I_n)}\leq C_n  \epsilon \quad \mbox{and} \quad \Vert \nabla \wb \Vert_{\Xb(I_n)}\leq  cA'+\sum_{j=1}^{n}C_j \epsilon 
\end{equation}
provided that \eqref{v56} and \eqref{evolution} hold with $t_0$ replaced by $a_j$. Indeed, evaluating the integral equation \eqref{wint} at $t = a_{n+1}$ and proceeding as in \eqref{wk1}-\eqref{wk2}, we have
\begin{equation}\label{wk11}
			\begin{split}
			\Vert	U_k(t-a_{n+1}) w_k(a_{n+1})\Vert_{{S}(I)} &\lesssim \Vert	U_k(t-t_0) w_k(t_0)\Vert_{{S}(I)}+\Vert \nabla
				e_k \Vert_{L_{t}^2L_x^{\frac{2d}{d+2}}(I\times\mathbb{R}^d)} \\
& \quad +\Vert \nabla[f_k(\ub)-f_k(\vb)]\Vert_{L_t^{2}L_x^{\frac{2d}{d+2}}(I\times\R^d)} \\
&\leq  \varepsilon +\sum_{j=1}^{N_1} C_j \varepsilon
\end{split}
		\end{equation}
		and
\begin{equation}\label{wk21}
\begin{split}
			\sup_{t\in[t_0,a_{n+1}]}\left\|w_k(t)\right\|_{\dot{H}_x^1} &\lesssim \left\|w_k(t_0)\right\|_{\dot{H}_x^1}+\Vert \nabla
				e_k \Vert_{L_{t}^2L_x^{\frac{2d}{d+2}}(I\times\mathbb{R}^d)} \\
& \quad +\Vert \nabla[f_k(\ub)-f_k(\vb)]\Vert_{L_t^{2}L_x^{\frac{2d}{d+2}}([t_0,a_{n+1}]\times\R^d)} \\
&\leq  cA^{\prime}+\varepsilon +\sum_{j=1}^{n+1} C_j \varepsilon.
\end{split}
		\end{equation}
Therefore, by choosing $\epsilon_0$ sufficiently small, depending on $M$, $A$, $A'$, and $d$, we can proceed with the inductive argument. Finally, the estimates \eqref{v100} and \eqref{v101} follow directly from \eqref{v55}, \eqref{wk111}, and \eqref{wk21}.
	\end{proof}
	
The preceding result allows us to establish a useful continuity result.
	
	\begin{proposition}
		Let $\mathbf{v}_0\in\dot{\mathbf{H}}^1(\mathbb{R}^d)$ be such that $\Vert \mathbf{v}_0\Vert_{\dot{\mathbf{H}}^1}\leq A$ and let $\mathbf{v}$ be the solution of \eqref{SISTR} with maximal-lifespan time interval $\tilde{I}=(T^-,T^+)$. Let $\mathbf{u}_0^n\rightarrow\mathbf{v}_0$ in $\dot{\mathbf{H}}^1(\mathbb{R}^d)$, and let $\mathbf{u}_n$ be the corresponding solutions of \eqref{SISTR}, with maximal-lifespan time interval $I_n=(T_n^-,T_n^+)$. Then, $T^-\geq \limsup_{n\rightarrow\infty}T_n^-$, $T^+\leq \liminf_{n\rightarrow\infty}T_n^+$ and for each $t\in(T^-,T^+)$, $\mathbf{u}_n(t)\rightarrow\mathbf{v}(t)$ in $\dot{\mathbf{H}}^1(\mathbb{R}^d)$.
	\end{proposition}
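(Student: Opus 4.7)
The plan is to apply the long-time perturbation Lemma \ref{stability} with $\vb$ playing the role of the ``approximate'' solution (it is in fact exact, so the error $\mathbf{e}\equiv 0$), comparing it to $\ub_n$ on an arbitrary compact subinterval of the maximal lifespan $\tilde I$ of $\vb$, and then letting this subinterval exhaust $\tilde I$.

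Fix a compact interval $K\subset\tilde I$ with $0\in K$. Since $\vb\in C(\tilde I;\dot{\mathbf{H}}^1)$, the quantity $\tilde A:=\sup_{t\in K}\|\vb(t)\|_{\dot{\mathbf{H}}^1}$ is finite. Partitioning $K$ into finitely many subintervals on which Proposition \ref{BCL} applies and summing the resulting bounds yields $M:=\|\vb\|_{\Sb(K)}<\infty$; indeed, were this to fail, $\vb$ would blow up strictly inside $\tilde I$, contradicting the blow-up criterion of Proposition \ref{standblow}. Set $A'_n:=\|\ub_0^n-\vb_0\|_{\dot{\mathbf{H}}^1}$, so $A'_n\to 0$ by hypothesis; in particular $A'_n\leq 1$ for $n$ large. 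By Strichartz estimates (Lemma \ref{strichartz} and Remark \ref{remstri}),
$$
\epsilon_n:=\|\mathbf{U}(\cdot)(\ub_0^n-\vb_0)\|_{\Sb(K)}\lesssim \|\ub_0^n-\vb_0\|_{\dot{\mathbf{H}}^1}\longrightarrow 0.
$$

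For $n$ large enough that $\max(A'_n,\epsilon_n)<\epsilon_0(M,\tilde A,1,d)$, all hypotheses of Lemma \ref{stability} are satisfied on $K$ with $A=\tilde A$, $A'=1$, and $\mathbf{e}\equiv 0$. The lemma produces the unique solution $\ub_n$ of \eqref{SISTR} on the full interval $K$ together with the bounds
$$
\|\ub_n\|_{\Sb(K)}\leq C(M,\tilde A,1,d),\qquad \sup_{t\in K}\|\ub_n(t)-\vb(t)\|_{\dot{\mathbf{H}}^1}\leq C(M,\tilde A,1,d)(A'_n+\epsilon_n),
$$
the right-hand side of the latter tending to $0$ as $n\to\infty$. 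By uniqueness of the maximal-lifespan solution, this forces $K\subset I_n$ for all sufficiently large $n$. Letting $K$ exhaust $\tilde I$ from the inside yields $T^-\geq\limsup T_n^-$ and $T^+\leq\liminf T_n^+$, while for any fixed $t\in\tilde I$ choosing $K\ni t$ gives $\ub_n(t)\to\vb(t)$ in $\dot{\mathbf{H}}^1$.

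The only delicate step is verifying $\|\vb\|_{\Sb(K)}<\infty$ on compact subintervals $K\subset\tilde I$; everything else is a direct application of the stability lemma combined with Strichartz estimates and the small-data theory of Proposition \ref{BCL}.
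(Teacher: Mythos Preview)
Your proof is correct and follows essentially the same approach as the paper's: apply the long-time perturbation lemma with $\vb$ as the exact approximate solution on an arbitrary compact subinterval $K\subset\subset\tilde I$, use Strichartz (together with the Sobolev embedding \eqref{SXnorms}) to make $\epsilon_n$ small, and exhaust $\tilde I$. You are in fact slightly more careful than the paper in justifying $\|\vb\|_{\Sb(K)}<\infty$ and in spelling out the exhaustion argument; the paper simply asserts these points.
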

	\begin{proof}
		Let $I\subset\subset \tilde{I}$, so that $\sup_{t\in I}\Vert\mathbf{u}\Vert_{\dot{\mathbf{H}}_x^1}=A<\infty$, $\Vert \mathbf{u}\Vert_{\mathbf{S}(I)}=M<\infty$. By setting $A' = 2A$, we can find $\epsilon_0 = \epsilon_0(M, A, 2A, d)$ such that the result of the previous lemma holds. Now, choosing $\epsilon < \epsilon_0$ and selecting $n$ sufficiently large so that $\Vert \mathbf{U}(t)[\mathbf{u}_0^n-\mathbf{v}]\Vert_{\mathbf{S}(I)}\leq\epsilon$, which is possible due to Strichartz estimates and the fact that $\mathbf{u}_0^n\rightarrow\mathbf{v}_0$ in $\dot{\mathbf{H}}^1(\mathbb{R}^d)$, we conclude that $\mathbf{u}_n$ exists on $I$. The conclusion $\mathbf{u}_n(t)\rightarrow\mathbf{v}(t)$ in $\dot{\mathbf{H}}^1(\mathbb{R}^d)$ for $t\in(T^-,T^+)$ follows immediately from the well-posedness theory.
	\end{proof}
	
	\subsection{Variational estimates}
Next we will establish some coercivity lemmas adapted to our setting.
	\begin{lemma}[Coercivity I]\label{coercivity1} Assume that $\mathbf{u}_0\in\dot{\mathbf{H}}^{1}(\mathbb{R}^d)$ and let $\mathbf{u}$ be a solution of \eqref{SISTR} with maximal existence interval $I$. Let $\boldsymbol{\psi}\in\mathcal{G}$ be a ground state. Suppose that 
		$$
		\E(\mathbf{u}_0)<(1-\tilde{\delta})\E(\boldsymbol{\psi}).
		$$
		\begin{itemize}
			\item[(i)] If 
			$$
			K(\mathbf{u}_0)<K(\boldsymbol{\psi}),
			$$
			then there exists $\tilde{\delta}_1=\tilde{\delta}_1(\tilde{\delta}) $ such that
			$$
			K(\mathbf{u}(t))<(1-\tilde{\delta}_1)K(\boldsymbol{\psi}),
			$$
			for all $t\in I$. 
			
			\item[(ii)]
			If 
			$$
			K(\mathbf{u}_0)>K(\boldsymbol{\psi}),
			$$
			then there exists $\tilde{\delta}_2=\tilde{\delta}_2(\tilde{\delta}) $ such that
			$$
			K(\mathbf{u}(t))>(1+\tilde{\delta}_2)K(\boldsymbol{\psi}),
			$$
			for all $t\in I$. 
		\end{itemize}
	\end{lemma}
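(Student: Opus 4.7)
The plan is to reduce the claim to the algebraic dichotomy provided by Lemma \ref{lema comp}, using the sharp Sobolev-type inequality from Corollary \ref{corol314} together with conservation of the energy $\mathcal{E}$. By Lemma \ref{Conserv} (with $\beta_k=0$, so $L(\mathbf{u})\equiv 0$ and $E=\mathcal{E}$), we have $\mathcal{E}(\mathbf{u}(t))=\mathcal{E}(\mathbf{u}_0)$ for every $t\in I$. For the potential, assumption \ref{H6} together with the pointwise bound $|\nabla|u_k||\le|\nabla u_k|$ shows $P(\mathbf{u})\le\int F(\!\!\big\bracevert\!\mathbf{u}\!\big\bracevert\!\!)\,dx$ and $K(\!\!\big\bracevert\!\mathbf{u}\!\big\bracevert\!\!)\le K(\mathbf{u})$, so Corollary \ref{corol314} applied to the real-valued vector $\!\big\bracevert\!\mathbf{u}(t)\!\big\bracevert$ yields
\begin{equation*}
P(\mathbf{u}(t))\le C_{\mathrm{opt}}\,K(\mathbf{u}(t))^{\frac{d}{d-2}}\qquad\text{for all }t\in I.
\end{equation*}

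Combining this with energy conservation $K(\mathbf{u}(t))-2P(\mathbf{u}(t))=\mathcal{E}(\mathbf{u}_0)$ gives
\begin{equation*}
\mathcal{E}(\mathbf{u}_0)-K(\mathbf{u}(t))+2C_{\mathrm{opt}}\,K(\mathbf{u}(t))^{\frac{d}{d-2}}\ge 0,\qquad t\in I.
\end{equation*}
This places $G(t):=K(\mathbf{u}(t))$, which is continuous on $I$ since $\mathbf{u}\in C(I;\dot{\mathbf{H}}^1)$, in the framework of Lemma \ref{lema comp} with $a=\mathcal{E}(\mathbf{u}_0)$, $b=2C_{\mathrm{opt}}$ and $q=\tfrac{d}{d-2}$.

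The key algebraic check is that the threshold $\gamma=(bq)^{-1/(q-1)}$ coincides with $K(\boldsymbol{\psi})$, and that $(1-1/q)\gamma=\mathcal{E}(\boldsymbol{\psi})$. This will follow from the identity $K(\boldsymbol{\psi})=\tfrac{2d}{d-2}P(\boldsymbol{\psi})$ obtained in \eqref{36}, which yields $\mathcal{E}(\boldsymbol{\psi})=\tfrac{2}{d}K(\boldsymbol{\psi})$, together with the explicit form \eqref{optimal} of $C_{\mathrm{opt}}$:
\begin{equation*}
bq=2C_{\mathrm{opt}}\cdot\frac{d}{d-2}=\frac{2d}{d-2}\cdot\frac{d-2}{2d}\,K(\boldsymbol{\psi})^{-\frac{2}{d-2}}=K(\boldsymbol{\psi})^{-\frac{2}{d-2}},
\end{equation*}
so $\gamma=K(\boldsymbol{\psi})$ and $(1-1/q)\gamma=\tfrac{2}{d}K(\boldsymbol{\psi})=\mathcal{E}(\boldsymbol{\psi})$. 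The hypothesis $\mathcal{E}(\mathbf{u}_0)<(1-\tilde\delta)\mathcal{E}(\boldsymbol{\psi})$ then reads exactly $a<(1-\tilde\delta)(1-1/q)\gamma$, which is the smallness assumption on $a$ required by Lemma \ref{lema comp}.

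With these identifications, part (i) of Lemma \ref{lema comp} applied to the case $G(0)=K(\mathbf{u}_0)<K(\boldsymbol{\psi})=\gamma$ yields $\tilde\delta_1=\tilde\delta_1(\tilde\delta)>0$ such that $K(\mathbf{u}(t))<(1-\tilde\delta_1)K(\boldsymbol{\psi})$ for every $t\in I$, proving (i); part (ii) applied to $G(0)=K(\mathbf{u}_0)>\gamma$ produces $\tilde\delta_2=\tilde\delta_2(\tilde\delta)>0$ such that $K(\mathbf{u}(t))>(1+\tilde\delta_2)K(\boldsymbol{\psi})$, proving (ii). The main technical point is really the arithmetic verification that the threshold $\gamma$ extracted from Corollary \ref{corol314} matches $K(\boldsymbol{\psi})$; once this is done, the dichotomy lemma supplies the conclusion with no further bootstrap needed, and the trapping of $K(\mathbf{u}(t))$ on the correct side of $K(\boldsymbol{\psi})$ propagates to the whole maximal interval $I$ by the continuity of $G$.
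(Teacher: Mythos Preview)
Your proof is correct and follows essentially the same route as the paper's: apply the sharp inequality \eqref{360} together with energy conservation to obtain $f\circ G\ge 0$ with $G(t)=K(\mathbf{u}(t))$, $a=\mathcal{E}(\mathbf{u}_0)$, $b=2C_{\mathrm{opt}}$, $q=\tfrac{d}{d-2}$, verify via \eqref{optimal} that $\gamma=K(\boldsymbol{\psi})$, and conclude with Lemma~\ref{lema comp}. Your write-up is in fact a bit more careful than the paper's, both in justifying the use of Corollary~\ref{corol314} for complex-valued $\mathbf{u}$ via \ref{H6} and $\!\big\bracevert\!\mathbf{u}\!\big\bracevert$, and in spelling out the algebra $(1-1/q)\gamma=\mathcal{E}(\boldsymbol{\psi})$.
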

	
	\begin{proof}
		From the energy conservation and the sharp inequality \eqref{360}, we deduce
		\begin{equation}\label{q28}
			K(\mathbf{u}(t))\leq \E(\mathbf{u}_0)+C_{\rm opt}K(\mathbf{u}(t))^{\frac{d}{d-2}},\quad \mbox{for all} \quad t\in I.
		\end{equation}
		Let $G(t)=K(\mathbf{u}(t))$, $a=\E(\mathbf{u}_0)$, $b=2C_{\rm opt}$ and $q={\frac{d}{d-2}}$. Using \eqref{optimal}, we get
		$$
		\gamma=(bq)^{-\frac{1}{q-1}}=\left(2C_{\rm opt}\frac{d}{d-2}\right)^{-\frac{d-2}{2}}=K(\psib).
		$$ 
Thus, Lemma \ref{lema comp} yields the desired result.
	\end{proof}
	
	\begin{lemma}[Coercivity II]\label{qCE} 
		Under hypothesis of Lemma \ref{coercivity1} we have
		\begin{itemize}
			\item[(i)] If 
			$$
			K(\mathbf{u}_0)<K(\boldsymbol{\psi}),
			$$
			then there exists $\delta'=\delta'(\tilde{\delta})>0$ such that
			\begin{equation}\label{coer2}
			K(\mathbf{u}(t))-\frac{2d}{d-2}P(\mathbf{u}(t))\geq \delta'K(\mathbf{u}(t)),
			\end{equation}
			for all $t\in I$. Moreover,  $\E(\ub)\geq 0$.
			\item[(ii)] If
			$$
			K(\mathbf{u}_0)>K(\boldsymbol{\psi}),
			$$
			then there exists $\delta''=\delta''(\tilde{\delta})>0$ such that
			$$
			K(\mathbf{u}(t))-\frac{2d}{d-2}P(\mathbf{u}(t))\leq -\delta''K(\mathbf{u}(t)),
			$$
			for all $t\in I$.
			
		\end{itemize}
	\end{lemma}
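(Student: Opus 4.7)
The strategy is to exploit the sharp Sobolev inequality from Corollary \ref{corol314} together with the uniform bounds on $K(\mathbf{u}(t))$ provided by Lemma \ref{coercivity1}, observing that the ground state saturates the sharp inequality: by \eqref{36} and \eqref{optimal} one has
$$\frac{2d}{d-2}\,C_{\rm opt}\,K(\boldsymbol{\psi})^{2/(d-2)}=1.$$
This single identity is what will drive both parts.

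\emph{Part (i).} I would apply the sharp inequality \eqref{360} to write, for each $t\in I$,
$$K(\mathbf{u}(t))-\frac{2d}{d-2}P(\mathbf{u}(t))\geq K(\mathbf{u}(t))\Big[1-\frac{2d}{d-2}\,C_{\rm opt}\,K(\mathbf{u}(t))^{2/(d-2)}\Big].$$
By Lemma \ref{coercivity1}(i), $K(\mathbf{u}(t))<(1-\tilde\delta_1)K(\boldsymbol{\psi})$, so the bracket is bounded below by $1-(1-\tilde\delta_1)^{2/(d-2)}=:\delta'>0$, which depends only on $\tilde\delta$. For the claim $\mathcal{E}(\mathbf{u})\geq 0$, I would use the algebraic identity
$$\mathcal{E}(\mathbf{u})=K(\mathbf{u})-2P(\mathbf{u})=\frac{d-2}{d}\Big[K(\mathbf{u})-\frac{2d}{d-2}P(\mathbf{u})\Big]+\frac{2}{d}K(\mathbf{u}),$$
in which both summands on the right are non-negative by \eqref{coer2} and the obvious positivity of $K$.

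\emph{Part (ii).} Here the inequality \eqref{360} runs the wrong way, so I would instead use energy conservation to eliminate $P$. From $\mathcal{E}(\mathbf{u})=K(\mathbf{u})-2P(\mathbf{u})$ we obtain
$$K(\mathbf{u})-\frac{2d}{d-2}P(\mathbf{u})=-\frac{2}{d-2}K(\mathbf{u})+\frac{d}{d-2}\mathcal{E}(\mathbf{u}).$$
Combining the hypothesis $\mathcal{E}(\mathbf{u}_0)<(1-\tilde\delta)\mathcal{E}(\boldsymbol{\psi})$ with the identity $\mathcal{E}(\boldsymbol{\psi})=\frac{2}{d}K(\boldsymbol{\psi})$ (a consequence of \eqref{36}) and the bound $K(\mathbf{u}(t))>(1+\tilde\delta_2)K(\boldsymbol{\psi})$ from Lemma \ref{coercivity1}(ii), which gives $K(\boldsymbol{\psi})<K(\mathbf{u}(t))/(1+\tilde\delta_2)$, yields
$$K(\mathbf{u}(t))-\frac{2d}{d-2}P(\mathbf{u}(t))<\frac{2}{d-2}\Big[\frac{1-\tilde\delta}{1+\tilde\delta_2}-1\Big]K(\mathbf{u}(t))=-\frac{2(\tilde\delta+\tilde\delta_2)}{(d-2)(1+\tilde\delta_2)}K(\mathbf{u}(t)),$$
so one can take $\delta''=2(\tilde\delta+\tilde\delta_2)/[(d-2)(1+\tilde\delta_2)]$, depending only on $\tilde\delta$.

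The only delicate point is the asymmetry between the two parts: in (i) the sharp Sobolev inequality is the natural tool because one needs a lower bound on $K-\tfrac{2d}{d-2}P$ in the regime $K<K(\boldsymbol{\psi})$, whereas in (ii) it would give the wrong sign, so energy conservation must substitute for it. Once this is recognized, everything reduces to elementary algebra together with the saturation identity $\frac{2d}{d-2}C_{\rm opt}K(\boldsymbol{\psi})^{2/(d-2)}=1$ and the already-established bounds on $K(\mathbf{u}(t))$.
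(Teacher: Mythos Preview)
Your proof is correct. Part (i) is identical to the paper's argument: both apply the sharp inequality \eqref{360}, use Lemma \ref{coercivity1}(i), and set $\delta'=1-(1-\tilde\delta_1)^{2/(d-2)}$; the energy non-negativity also follows from the same algebraic decomposition of $\mathcal{E}$ in both versions.

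For part (ii) the paper simply writes ``The second one is proved in an analogous way'' without details. Your observation that the sharp Sobolev inequality points the wrong direction here, and that one must instead eliminate $P$ via energy conservation and then feed in $\mathcal{E}(\mathbf{u}_0)<(1-\tilde\delta)\mathcal{E}(\boldsymbol\psi)=\tfrac{2(1-\tilde\delta)}{d}K(\boldsymbol\psi)$ together with Lemma \ref{coercivity1}(ii), is exactly right and makes explicit what the paper's phrase ``analogous'' glosses over. The explicit constant $\delta''=\dfrac{2(\tilde\delta+\tilde\delta_2)}{(d-2)(1+\tilde\delta_2)}$ you obtain is a nice bonus. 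In short: your write-up is more informative than the paper's on this point, but the underlying ideas coincide.
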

	
	\begin{proof}
		We will show item (i). The second one is proved in an analogous way. Using the sharp inequality \eqref{360} and Lemma \ref{coercivity1}, we deduce
		$$
		1-\frac{2d}{d-2}\frac{P(\mathbf{u}(t))}{K(\mathbf{u}(t))}\geq 1-\frac{2d}{d-2}C_{\rm opt}K(\mathbf{u}(t))^{\frac{2}{d-2}}=1-\left[\frac{K(\mathbf{u}(t))}{K(\boldsymbol{\psi})}\right]^{\frac{2}{d-2}}\geq 1-(1-\tilde{\delta}_1)^{\frac{2}{d-2}}=:\delta'.
		$$
		Multiplying both sides by $K(\mathbf{u}(t))$ we get the result. 
		
		Next we claim that the energy of the solution in non negative in case (i). Indeed, if $\E(\ub)\geq \E(\psib)>0$ (see Remark \ref{obs32}) the result is trivial. On the other hand, if $\E(\ub)<\E(\psib)$, since
$$
\E(\ub)= K(\mathbf{u}(t))-2P(\mathbf{u}(t))=\frac{2}{d}K(\mathbf{u}(t))+\frac{d-2}{d}\left[K(\mathbf{u}(t))-\frac{2d}{d-2}P(\mathbf{u}(t))\right],
$$
inequality \eqref{coer2} implies that $\E(\ub)\geq 0$.
  
	\end{proof}
	
	\begin{lemma}[Energy trapping]\label{qET} Let $\mathbf{u}_0\in \dot{\mathbf{H}}^1(\mathbb{R}^d)$ and $\mathbf{u}$ be the corresponding solution of \eqref{SISTR} with maximal existence interval $I$. If $\E(\mathbf{u}_0)\leq (1-\delta)\E(\boldsymbol{\psi})$ and $K(\mathbf{u}_0)\leq (1-\delta')K(\boldsymbol{\psi})$, then
		\begin{equation*}\label{g312}
			K(\mathbf{u}(t)) \sim \E(\mathbf{u}(t)),\quad \mbox{for all} \quad t\in I.
		\end{equation*}
	\end{lemma}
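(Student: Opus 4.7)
The plan is to establish the equivalence $K(\mathbf{u}(t)) \sim \mathcal{E}(\mathbf{u}(t))$ by proving the two directions separately, combining energy conservation with the coercivity lemmas already obtained and the sharp Sobolev-type inequality \eqref{360}. Note first that energy conservation (Lemma \ref{Conserv}, specialized to the case $\beta_k=0$) gives $\mathcal{E}(\mathbf{u}(t)) = \mathcal{E}(\mathbf{u}_0)$ for all $t \in I$, and moreover Lemma \ref{qCE}(i) yields $\mathcal{E}(\mathbf{u}(t))\geq 0$.

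For the lower bound $\mathcal{E}(\mathbf{u}(t)) \gtrsim K(\mathbf{u}(t))$, the key is the elementary algebraic decomposition
\begin{equation*}
\mathcal{E}(\mathbf{u}(t)) = K(\mathbf{u}(t)) - 2P(\mathbf{u}(t)) = \frac{2}{d}\, K(\mathbf{u}(t)) + \frac{d-2}{d}\left[K(\mathbf{u}(t)) - \frac{2d}{d-2}P(\mathbf{u}(t))\right],
\end{equation*}
which was already used in the proof of Lemma \ref{qCE}. Applying Lemma \ref{qCE}(i), with constant $\delta'=\delta'(\delta,\delta')>0$, the bracketed term is at least $\delta' K(\mathbf{u}(t))$, so
\begin{equation*}
\mathcal{E}(\mathbf{u}(t)) \geq \left(\frac{2}{d} + \frac{(d-2)\delta'}{d}\right)K(\mathbf{u}(t)),
\end{equation*}
which gives the desired lower bound.

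For the reverse bound $\mathcal{E}(\mathbf{u}(t)) \lesssim K(\mathbf{u}(t))$, I would combine the a priori uniform bound on the kinetic energy with the sharp Sobolev inequality. By Lemma \ref{coercivity1}(i) we have $K(\mathbf{u}(t)) \leq K(\boldsymbol{\psi})$ for all $t \in I$. On the other hand, by Lemma \ref{lemma22}(i) and Sobolev embedding (or directly the sharp inequality \eqref{360} which controls $P$ even without the sign assumption encoded by $\mathcal{N}$, the same Sobolev chain before \eqref{SCI} gives $|P(\mathbf{u})| \lesssim K(\mathbf{u})^{d/(d-2)}$), so
\begin{equation*}
|P(\mathbf{u}(t))| \lesssim K(\mathbf{u}(t))^{\frac{d}{d-2}} = K(\mathbf{u}(t))^{\frac{2}{d-2}} K(\mathbf{u}(t)) \lesssim K(\boldsymbol{\psi})^{\frac{2}{d-2}} K(\mathbf{u}(t)).
\end{equation*}
Hence $\mathcal{E}(\mathbf{u}(t)) = K(\mathbf{u}(t)) - 2P(\mathbf{u}(t)) \leq K(\mathbf{u}(t)) + 2|P(\mathbf{u}(t))| \lesssim K(\mathbf{u}(t))$.

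There is no real obstacle here: the statement is essentially a packaging of the coercivity estimates already proved, together with the sharp Sobolev inequality used to make $|P|$ linearly controlled by $K$ once $K$ is a priori bounded by $K(\boldsymbol{\psi})$. The only point requiring mild care is ensuring that the constants in both inequalities depend only on $\delta$, $\delta'$ and $K(\boldsymbol{\psi})$, which is transparent from the way Lemmas \ref{coercivity1} and \ref{qCE} were stated.
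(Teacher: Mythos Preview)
Your proposal is correct and follows essentially the same route as the paper: both directions are proved exactly as you describe, using the algebraic decomposition $\mathcal{E}=\tfrac{2}{d}K+\tfrac{d-2}{d}\bigl[K-\tfrac{2d}{d-2}P\bigr]$ together with Lemma \ref{qCE}(i) for the lower bound, and the Sobolev-type inequality $|P|\lesssim K^{d/(d-2)}$ combined with the a priori bound $K(\mathbf{u}(t))\leq K(\boldsymbol{\psi})$ from Lemma \ref{coercivity1}(i) for the upper bound.
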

	\begin{proof}
		By \eqref{optimal} and $\E(\mathbf{u}_0)\leq (1-\delta)\E(\boldsymbol{\psi})$ we obtain
		\begin{equation*}
			\begin{split}
				\E(\mathbf{u}(t))&\leq K(\mathbf{u}(t))+2|P(\mathbf{u}(t))|\\
				&\leq K(\mathbf{u}(t))+ 2C_{\rm opt}|K(\mathbf{u}(t))|^{\frac{d}{d-2}}\\
				&\leq \left(1+2C_{\rm opt}[(1-\tilde{\delta}_1)K(\boldsymbol{\psi})^{\frac{2}{d-2}}]\right)K(\mathbf{u}(t)).
			\end{split}
		\end{equation*}
		On the other hand, 
		\begin{equation*}
			\begin{split}
				\E(\mathbf{u}(t))&\geq \frac{2}{d}K(\mathbf{u}(t))+\frac{d-2}{d}[K(\mathbf{u}(t))-\frac{2d}{d-2}P(\mathbf{u}(t))]\\
				&\geq \frac{2}{d}K(\mathbf{u}(t))+\frac{d-2}{d}\delta' K(\mathbf{u}(t))\\
				&=\frac{2}{d}\left(1+\frac{d-2}{d}\delta'\right)K(\mathbf{u}(t)).
			\end{split}
		\end{equation*}
		Combining both inequalities, we get the result.
	\end{proof}

	\subsection{Concentration-compactness-rigidity method}
	We start by setting $E_C$ to be the threshold
	\begin{equation*}
		E_C= \sup\left.\begin{cases}0<E<\E(\psib):\,\, \mbox{for every radial} \,\, \ub_0\in\Dz,\,\, \mbox{such that}\\
		K(\ub_0)<K(\psib)\,\,\mbox{and} \,\,\E(\ub_0)<E,\mbox{then the corresponding solution}\\
		\mbox{ to \eqref{SISTR} is global and satisfies}\,\,\sumk \Vert u_k\Vert_{S(\R)}<\infty.\end{cases}\right\},
	\end{equation*}
where $\psib$ is any ground state solution for system \eqref{SIST3}. By the small data theory and Lemma \ref{qET}, the number $E_C$ is well-defined and satisfies $0 < \tilde{\delta} \leq E_C \leq \E(\psib)$, where $\tilde{\delta} \sim \delta_0$ as in Corollary \ref{smalldata}. To prove Theorem \ref{MAIN}, we must establish that $E_C = \E(\psib)$. From now on, we assume $E_C < \E(\psib)$. The first step is to demonstrate the existence of a solution with the critical energy $E_C$. To this end, we recall the linear profile decomposition result from \cite{keraani}, which relies on an improved Sobolev inequality established in \cite{GMO}. Here, we present the version adapted to the Schrödinger propagator $U_k(t)$, $k=1,\ldots,l$, for bounded sequences in $\Dz$.
	
	\begin{lemma}[Linear profile decomposition]\label{LPD} 
		Let $(\vb_n)\subset\Dz$ be a radial sequence such that $\|\vb_n\|_{\dot{\mathbf{H}}^1}\leq A$. Assume that $\Vert U_k(t)v_{kn}\Vert_{S(\mathbb{R})}\geq\delta(A)>0$, for $k=1,\ldots,l$, where $\delta(A)$ is given in Proposition \ref{BCL}. Then, there exists a radial sequence $(\mathbf{V}_j)\subset \Dz$, a subsequence of $(\vb_n)$ and parameters $(\lambda_{j,n},\,t_{j,n})\in\R^+\times\R$ with 
		\begin{equation}\label{ortho}
			\frac{\lambda_{j,n}}{\lambda_{j',n}}+\frac{\lambda_{j',n}}{\lambda_{j,n}}+\frac{|t_{j,n}-t_{j',n}|}{\lambda_{j,n}^2}\rightarrow\infty,\quad \hbox{as }n\rightarrow\infty,\,j\neq j'.
		\end{equation}
		If $\mathbf{L}_j(x,t)(t,x)=\mathbf{U}(t)\mathbf{V}_j$, then given $\epsilon_0>0$, there exists $J=J(\epsilon_0)$ and a radial sequence $(\wb_n)\subset\Dz$ so that
		\begin{equation}\label{dec1}
			\vb_n=\sum_{j=1}^J\frac{1}{\lambda_{j,n}^{(d-2)/2}}\mathbf{L}_j\left(\frac{x}{\lambda_{j,n}},\frac{-t_{j_n}}{\lambda_{j,n}^2}\right)+\wb_n,
		\end{equation}
		with $\Vert U_k(t)w_{kn}\Vert_{S(\R)}\leq \epsilon_0$, for $k=1,\ldots,l$ and $n$ large. Moreover,
		\begin{equation}\label{dec2}
			\lim_{n\rightarrow\infty} K(\vb_n)=\sum_{j=1}^JK(\mathbf{V}_j)+\lim_{n\rightarrow\infty}K(\wb_n)\\
		\end{equation}
		and
		\begin{equation}\label{dec3}
			\lim_{n\rightarrow\infty} \E(\vb_n)=\lim_{n\rightarrow\infty}\sum_{j=1}^J \E(\mathbf{L}_j(-t_{j,n}/\lambda_{j,n}^2))+\lim_{n\rightarrow\infty}\E(\wb_n).
		\end{equation}
	\end{lemma}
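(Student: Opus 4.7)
The plan is to adapt Keraani's original linear profile decomposition argument \cite{keraani} to the vector-valued radial setting. The principal tool is the refined Strichartz inequality derived from the improved Sobolev embedding of Gérard–Meyer–Oru: if $\|f\|_{\dot H^1(\R^d)} \le A$ and $\|e^{it\Delta} f\|_{S(\R)} \ge \delta > 0$, then there exist parameters $(\lambda, t)$ such that $\lambda^{(d-2)/2}(e^{-it\Delta}f)(\lambda\,\cdot)$ has a nonzero weak limit in $\dot H^1$ whose norm is bounded below by a positive power of $\delta/A$. Because each propagator $U_k(t) = e^{it(\gamma_k/\alpha_k)\Delta}$ (recalling $\beta_k=0$ in \eqref{SISTR}) carries the same scaling symmetry of degree $(d-2)/2$, the extracted parameters $(\lambda_{j,n}, t_{j,n})$ may be chosen common to all $l$ components, and radial symmetry removes the need for any spatial translation parameter.

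I would proceed iteratively. Starting from a component $k_0$ along which $\|U_{k_0}(t) v_{k_0,n}\|_{S(\R)} \ge \delta(A)$, the refined Strichartz inequality furnishes $(\lambda_{1,n}, t_{1,n})$ and a nonzero weak limit $\mathbf{V}_1 \in \Dz$ of $\lambda_{1,n}^{(d-2)/2}[\mathbf{U}(-t_{1,n}/\lambda_{1,n}^2)\vb_n](\lambda_{1,n}\cdot)$. Setting $\mathbf{L}_1(t,x) = \mathbf{U}(t)\mathbf{V}_1$ and
\[
\wb_n^1(x) := \vb_n(x) - \lambda_{1,n}^{-(d-2)/2}\,\mathbf{L}_1\!\bigl(-t_{1,n}/\lambda_{1,n}^2,\ x/\lambda_{1,n}\bigr),
\]
the Hilbert-space Pythagoras theorem in $\Dz$ yields $K(\vb_n) = K(\mathbf{V}_1) + K(\wb_n^1) + o(1)$, while the refined Strichartz inequality forces a quantitative drop of $\|U_k w_{k,n}^1\|_{S(\R)}$ by an amount depending on $\|\mathbf{V}_1\|_{\Dz}$. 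If every $\|U_k w_{k,n}^J\|_{S(\R)} < \epsilon_0$ we stop, otherwise we extract another profile. Because $\sum_j K(\mathbf{V}_j)$ is controlled by $\limsup K(\vb_n) \lesssim A^2$ and each extracted profile contributes at least a fixed positive amount, the process must terminate after $J = J(\epsilon_0)$ steps.

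Orthogonality \eqref{ortho} is proved by contradiction: if for some $j \ne j'$ the quantity $\lambda_{j,n}/\lambda_{j',n} + \lambda_{j',n}/\lambda_{j,n} + |t_{j,n} - t_{j',n}|/\lambda_{j,n}^2$ stayed bounded along a subsequence, then after pulling the $j'$-th profile back through the $j$-th rescaling it would supply a nonzero contribution to the weak limit defining $\mathbf{V}_j$, contradicting the subtraction scheme. The kinetic-energy identity \eqref{dec2} then follows from \eqref{ortho} by expanding $K(\vb_n)$ and noting that cross terms between profiles rescaled with divergent scales or time translations tend to zero in $\Dz$ by weak convergence.

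The principal obstacle is the potential-energy decoupling \eqref{dec3}, which is not a Hilbert-space identity and therefore does not follow from Pythagoras. To handle it I would expand $F\bigl(\sum_{j=1}^J \lambda_{j,n}^{-(d-2)/2}\mathbf{L}_j(-t_{j,n}/\lambda_{j,n}^2, \cdot/\lambda_{j,n}) + \wb_n\bigr)$ using the pointwise bound \eqref{fk22} implied by \ref{H2} together with \ref{H5}. The diagonal contributions $\RE\int F(\lambda_{j,n}^{-(d-2)/2}\mathbf{L}_j^n(\cdot/\lambda_{j,n}))\,dx$ are scale-invariant by the homogeneity \ref{H5}, so they equal $\RE\int F(\mathbf{L}_j(-t_{j,n}/\lambda_{j,n}^2))\,dx$. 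The mixed terms, involving two distinct profiles or a profile and the remainder, are shown to vanish after passing to the $j$-th frame via change of variables: the orthogonality \eqref{ortho} translates into weak convergence of one factor against another that converges strongly in the dual Lebesgue space, so Hölder's inequality combined with the sub-critical control provided by \ref{H2} yields $o(1)$. This is the standard Brezis–Lieb strategy used in \cite{keraani} for pure-power nonlinearities, now extended to our general $F$ by \ref{H2} and \ref{H5}, and it closes the proof.
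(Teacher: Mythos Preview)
Your proposal is correct and follows essentially the same approach as the paper: the paper simply states that the proof is an adaptation of \cite[Theorem~1.6]{keraani} (and \cite[Theorem~4.1]{visan}) to the vector-valued setting, with the remark that radial symmetry eliminates the translation parameter. Your sketch fleshes out exactly this adaptation---iterative profile extraction via the refined Strichartz/improved Sobolev inequality of G\'erard--Meyer--Oru, Pythagorean decoupling of $K$, orthogonality by contradiction, and Brezis--Lieb--type decoupling of the potential energy using \ref{H2} and \ref{H5}---so there is nothing to add.
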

	\begin{proof}
		The proof can be adapted from the one presented in \cite[Theorem 1.6]{keraani} and also \cite[Theorem 4.1]{visan}. We note that, in the radial case, it is not necessary to introduce a translation parameter.
	\end{proof}
	\begin{lemma}\label{lema49}
		Let $(\zb_0^n)\subset\Dz$ be a radial sequence, with $K(\zb_0^n)<K(\psib)$ and $\E(\zb_0^n)\rightarrow E_C$ with $\Vert \mathbf{U}(t)\zb_0^n\Vert_{\Sb(\R)} \geq \delta$ and $\delta$ as in Proposition \ref{BCL}. Let $(\mathbf{V}_j)$ be as in Lemma \ref{LPD}. Assume that one of the two hypothesis
		\begin{equation}\label{km410}
			\liminf_{n\rightarrow\infty}\E(\mathbf{L}_j(-t_{1,n}))/\lambda_{1,n}^2)<E_C
		\end{equation}
		or, after passing to a subsequence, we have that, $s_n=-t_1/\lambda_{1,n}^2$, $\E(\mathbf{L}_j(s_n))\rightarrow E_C$ and $s_n\rightarrow s_*\in[-\infty,\infty]$ and if $\mathbf{W}_1$ is the non-linear profile associated to $(\mathbf{V}_1, (s_n))$ we have that the maximal interval of existence of $\mathbf{W}_1$ is $I=\R$, $\Vert \mathbf{W}_1\Vert_{\Sb(\R)}<\infty$ and 
		\begin{equation}\label{km411}
			\liminf_{n\rightarrow\infty}\E(\mathbf{L}_j(-t_{1,n}/\lambda_{1,n}^2))=E_C.
		\end{equation}
		Then, up to a subsequence, for $n$ large, if $\zb_n$ is the solution associated to $\zb_0^n$, then $\zb_n$ is global and $\Vert \zb_n\Vert_{\Sb(\R)}<\infty$.
	\end{lemma}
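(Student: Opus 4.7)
The plan is to follow the Kenig--Merle concentration-compactness strategy: build an approximate solution to \eqref{SISTR} out of the nonlinear profiles associated to the linear decomposition of $\zb_0^n$ given by Lemma \ref{LPD}, and then compare it to $\zb_n$ via the long-time perturbation Lemma \ref{stability}. Set $s_{j,n}=-t_{j,n}/\lambda_{j,n}^2$, extract a subsequence so that $s_{j,n}\to s_{j,*}\in[-\infty,\infty]$ for every $j$, and let $\mathbf{W}_j$ denote the nonlinear profile associated to $(\mathbf{V}_j,(s_{j,n}))$ in the sense of Definition \ref{non-linear profile}. The Pythagorean identity \eqref{dec2}, combined with $K(\zb_0^n)<K(\psib)$, gives $K(\mathbf{V}_j)<K(\psib)-\eta_0$ uniformly in $j$ for some $\eta_0>0$; since $K$ is preserved by the linear flow, Lemma \ref{coercivity1} then keeps the nonlinear evolution of each $\mathbf{W}_j$ below $K(\psib)$, and Lemma \ref{qCE} ensures $\E(\mathbf{W}_j)\geq 0$, so that \eqref{dec3} forces $\E(\mathbf{L}_j(s_{j,n}))\leq E_C+o(1)$ for every $j$.

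Next I would promote each $\mathbf{W}_j$ to a global solution with finite $\Sb(\R)$-norm. For $j=1$: under \eqref{km410} we have $\E(\mathbf{L}_1(s_{1,n}))<E_C-\eta$ along a subsequence, so the very definition of $E_C$ gives $\Vert\mathbf{W}_1\Vert_{\Sb(\R)}<\infty$; under \eqref{km411} this is included in the hypothesis. For $j\geq 2$: in case \eqref{km410}, the non-negativity of the profile energies together with \eqref{dec3} still yields $\E(\mathbf{L}_j(s_{j,n}))<E_C-\eta'$ strictly, and the definition of $E_C$ again applies; in case \eqref{km411}, $\E(\mathbf{L}_j(s_{j,n}))\to 0$ for $j\geq 2$, and Lemma \ref{qET} forces $K(\mathbf{V}_j)\to 0$, so only finitely many non-trivial profiles survive and the remaining tail is handled by Corollary \ref{smalldata}.

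Then I would introduce the approximate solution
\[
\zb_n^{\mathrm{app}}(t,x)=\sum_{j=1}^J \lambda_{j,n}^{-(d-2)/2}\,\mathbf{W}_j\!\left(\frac{x}{\lambda_{j,n}},\,\frac{t}{\lambda_{j,n}^{2}}+s_{j,n}\right)+\mathbf{U}(t)\wb_n.
\]
Each rescaled profile solves \eqref{SISTR} exactly and $\mathbf{U}(t)\wb_n$ is a free evolution, so the error reduces, componentwise, to
\[
\mathbf{e}_k=f_k(\zb_n^{\mathrm{app}})-\sum_{j=1}^{J}f_k(\mathbf{W}_j^{(n)}),
\]
where $\mathbf{W}_j^{(n)}$ denotes the $j$-th rescaled nonlinear profile. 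Expanding $f_k$ by means of Lemma \ref{consequences} and using the orthogonality \eqref{ortho}, every cross term vanishes in $\Lb_t^2\Lb_x^{2d/(d+2)}$ after the change of variables adapted to each $\lambda_{j,n}$, while the contribution of $\mathbf{U}(t)\wb_n$ is controlled through its smallness in $\Sb(\R)$ together with Lemma \ref{consequences}(iii); hence $\Vert\nabla\mathbf{e}\Vert_{\Lb_t^2\Lb_x^{2d/(d+2)}}\to 0$. The uniform bounds on $\Vert \zb_n^{\mathrm{app}}\Vert_{\Lb^\infty_t\dot{\mathbf{H}}^1_x}$ and $\Vert\zb_n^{\mathrm{app}}\Vert_{\Sb(\R)}$ follow from the finite $\Sb(\R)$ norms of the $\mathbf{W}_j$ obtained above together with the orthogonality \eqref{ortho}, and \eqref{dec1} gives $\Vert\zb_0^n-\zb_n^{\mathrm{app}}(0)\Vert_{\dot{\mathbf{H}}^1}\to 0$. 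Lemma \ref{stability} then yields, for $n$ large, that $\zb_n$ exists on all of $\R$ and satisfies $\Vert\zb_n\Vert_{\Sb(\R)}\leq C<\infty$, as required.

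The main obstacle will be the vanishing of the cross terms in the error analysis: the power $\tfrac{4}{d-2}$ is not an integer for $d=3,5$, so a direct polynomial expansion of $f_k(\sum_j \mathbf{W}_j^{(n)})$ is unavailable. One must combine the parameter orthogonality \eqref{ortho} with a density argument, approximating each $\mathbf{W}_j$ by smooth, compactly supported functions in space-time before making the change of variables, so that the asymptotic disjointness of the rescaled supports can be exploited pointwise inside the H\"older estimates of Lemma \ref{consequences}.
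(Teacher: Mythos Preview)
Your proposal is correct and follows precisely the Kenig--Merle approach that the paper invokes (the paper's own proof consists of the single line ``adapt \cite[Lemma 4.9]{KM}''), so the strategy of building an approximate solution from nonlinear profiles and closing with Lemma \ref{stability} is exactly what is intended. Two minor points worth tightening: first, your claim that $\E(\mathbf{L}_j(s_{j,n}))<E_C-\eta'$ for $j\ge 2$ under \eqref{km410} relies on the standard convention that the profiles are ordered so that $j=1$ carries the largest energy, which you should state explicitly; second, in case \eqref{km411} the energy decomposition actually forces $\mathbf{V}_j=0$ for all $j\ge 2$ and $K(\wb_n)\to 0$, so there is a single nontrivial profile rather than ``finitely many'' surviving.
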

	\begin{proof}
		The proof follows by adapting the one presented in \cite[Lemma 4.9]{KM}.
	\end{proof}
	
	\begin{proposition}\label{criticalsolution}[Existence of the critical solution]. Assume that $E_C<\mathcal{E}(\psib)$. Then, there exists a radially symmetric initial data $\ub_{0,C}\in \Dz$ satisfying
	\begin{equation}\label{o51}
			K(\ub_{0,C})<K(\psib)\quad \mbox{and}\quad \E(\ub_{0,C})=E_C
		\end{equation}
such that the corresponding solution $\ub_C$ of \eqref{SISTR} on a time interval $I_C\subset\R$ satisfies
		\begin{equation}\label{o512}
			 \sumk\Vert u_{kC}\Vert_{S(I_C)}=\infty.
		\end{equation}
	\end{proposition}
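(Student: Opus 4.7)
The plan is to construct $\ub_C$ by extracting a minimizing sequence, applying the linear profile decomposition, and showing that only a single profile can survive. First, by the definition of $E_C$, I would pick a sequence of radially symmetric initial data $(\ub_0^n)\subset\Dz$ with $K(\ub_0^n)<K(\psib)$ and $\E(\ub_0^n)\searrow E_C$, whose corresponding solutions $\ub_n$ of \eqref{SISTR} satisfy $\sumk\Vert u_{k,n}\Vert_{S(I_n)}=\infty$ on their maximal intervals. By Strichartz and the small-data result (Corollary \ref{smalldata}), we may assume $\Vert \mathbf{U}(t)\ub_0^n\Vert_{\Sb(\R)}\geq\delta(A)$, with $A$ controlling $\Vert\ub_0^n\Vert_{\dot{\mathbf{H}}^1}$ (which is uniformly bounded thanks to Lemma \ref{qET}).

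Second, I would apply the linear profile decomposition (Lemma \ref{LPD}) to pass, up to a subsequence, to profiles $(\mathbf{V}_j)$, parameters $(\lambda_{j,n},t_{j,n})$, and a remainder $(\wb_n)$ satisfying \eqref{dec1}--\eqref{dec3}. A standard argument, using the orthogonality \eqref{ortho} together with the coercivity/energy-trapping Lemmas \ref{coercivity1} and \ref{qET}, then transfers the inequalities $K(\ub_0^n)<K(\psib)$ and $\E(\ub_0^n)\to E_C$ to each profile: for every $j$ and all $n$ large one gets $K(\mathbf{L}_j(-t_{j,n}/\lambda_{j,n}^2))<K(\psib)$ and $\E(\mathbf{L}_j(-t_{j,n}/\lambda_{j,n}^2))\geq 0$ with limit at most $E_C$, and similarly for $\wb_n$.

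Third, I would rule out having two or more nontrivial profiles. If $\mathbf{V}_2\neq 0$ as well, then from \eqref{dec3} and the non-negativity of each limiting energy, both profiles would satisfy $\liminf_n\E(\mathbf{L}_j(-t_{j,n}/\lambda_{j,n}^2))<E_C$; each would then fall in case \eqref{km410} of Lemma \ref{lema49}, which forces the solution $\ub_n$ to be global with $\Vert\ub_n\Vert_{\Sb(\R)}<\infty$ for large $n$, contradicting the choice of the minimizing sequence. Hence only $\mathbf{V}_1$ is nontrivial, $\lim_n K(\wb_n)=0$ (whence $\wb_n\to 0$ in $\Dz$), and necessarily $\liminf_n\E(\mathbf{L}_1(-t_{1,n}/\lambda_{1,n}^2))=E_C$.

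Finally, I would invoke case \eqref{km411} of Lemma \ref{lema49}: if the nonlinear profile $\mathbf{W}_1$ associated to $(\mathbf{V}_1,(-t_{1,n}/\lambda_{1,n}^2))$ were global with finite $\Sb(\R)$-norm, then Lemma \ref{lema49} would again produce $\Vert\ub_n\Vert_{\Sb(\R)}<\infty$ for large $n$, a contradiction. Therefore $\Vert \mathbf{W}_1\Vert_{\Sb(I_{\mathbf{W}_1})}=\infty$ on its maximal interval. Setting $\ub_C:=\mathbf{W}_1$ (after rescaling so $\lambda_{1,n}\equiv 1$, $t_{1,n}\equiv 0$, using the scaling invariance of \eqref{SISTR}) and $\ub_{0,C}:=\ub_C(0)$ yields \eqref{o51} and \eqref{o512}: $K(\ub_{0,C})<K(\psib)$ is inherited from the profile, $\E(\ub_{0,C})=E_C$ comes from \eqref{dec3}, and the infinite scattering norm is what we just established.

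The main obstacle is the contradiction argument in the multi-profile case: one must compare the genuine solution $\ub_n$ with the superposition $\sum_j R_{j,n}^{-(d-2)/2}\mathbf{W}_j(R_{j,n}^{-2}(t-t_{j,n}),R_{j,n}^{-1}x)$ of the rescaled nonlinear profiles and invoke the long-time perturbation Lemma \ref{stability}. The delicate point is controlling the cross terms $f_k\bigl(\sum_j\mathbf{W}_j^{\,j,n}\bigr)-\sum_j f_k(\mathbf{W}_j^{\,j,n})$ in $\Lb_t^2\Lb_x^{2d/(d+2)}$ using only the pseudo-orthogonality \eqref{ortho} and the assumption \ref{H2} on $f_k$; this is standard for the scalar equation but must be checked component-by-component here, taking advantage of the power-type bound $|f_k(\zb)-f_k(\zb')|\lesssim\sum_{j,m}(|z_j|^{4/(d-2)}+|z_j'|^{4/(d-2)})|z_m-z_m'|$ from Lemma \ref{consequences}.
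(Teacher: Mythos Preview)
Your proposal is correct and follows essentially the same route as the paper's proof: extract a minimizing sequence from the definition of $E_C$, apply Lemma~\ref{LPD}, use the energy decomposition \eqref{dec3} together with the nonnegativity of the energies (Lemma~\ref{qCE}) to force a single nontrivial profile via Lemma~\ref{lema49}, and then take the associated nonlinear profile $\mathbf{W}_1$ as the critical solution. The only cosmetic difference is that the paper first rescales the sequence $\ub_0^n$ by $\lambda_{1,n}$ and then time-translates $\mathbf{W}_1$ so that $\tilde{s}=0$, rather than ``rescaling so $\lambda_{1,n}\equiv 1$, $t_{1,n}\equiv 0$'' as you phrase it; your discussion of the cross-term control is precisely the content of Lemma~\ref{lema49}, which the paper invokes as a black box adapted from \cite{KM}.
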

	\begin{proof}
	Suppose that $E_C<\E(\psib)$.	By definition of $E_C$, there exists a sequence of radial initial data $(\ub_0^n)\in\Dz$ such that
		$$
		\sup_{n\in\N}K(\ub_0^n)<K(\psib),\quad \E(\ub_0^n)\searrow E_C\quad\mbox{and}\quad \sup_{n\in\N}\E(\ub_0^n)<\E(\psib).
		$$
		Let $\ub^n$ be the corresponding sequence of solutions of \eqref{SISTR} with maximal-lifespan $I_n$ and $\ub^n(0)=\ub_0^n$, then $\Vert U_k(t) u_{k0}^n\Vert_{S(\R)}\geq \delta>0$ and  $\Vert u_k^n\Vert_{S(I_n)}=\infty$ for $\kl$. As $E_C < \E(\psib)$, Lemma \ref{qET} ensures the existence of a $\tilde{\delta}$ such that $K(\ub^n(t)) \leq (1 - \tilde{\delta})K(\psib)$ for large $n$ and $t \in I_n$. Next, we apply the Linear Profile Decomposition, Lemma \ref{LPD}, to write 
		\begin{equation}\label{ndec1}
			\ub_0^n=\sum_{j=1}^J\frac{1}{\lambda_{j,n}^{(d-2)/2}}\mathbf{L}_j\left(\frac{x}{\lambda_{j,n}},\frac{-t_{j_n}}{\lambda_{j,n}^2}\right)+\wb_n,
		\end{equation}
		\begin{equation}\label{ndec2}
			\lim_{n\rightarrow\infty} K(\ub_0^n)=\sum_{j=1}^JK(\mathbf{V}_j)+\lim_{n\rightarrow\infty}K(\wb_n),
		\end{equation}
		\begin{equation}\label{ndec3}
			\lim_{n\rightarrow\infty} \E(\ub_0^n)=\lim_{n\rightarrow\infty}\sum_{j=1}^J \E\left(\mathbf{L}_j\left(\frac{-t_{j,n}}{\lambda_{j,n}^2}\right)\right)+\lim_{n\rightarrow\infty}\E(\wb_n).
		\end{equation}
		Hence, \eqref{ndec2} implies that for $n$ large enough $K(\wb_n)\leq (1-\tilde{\delta}/2)K(\psib)$ and $K(\mathbf{V}_j)\leq (1-\tilde{\delta}/2)K(\psib)$. Also, \eqref{ndec3} together with Lemma \ref{qCE} leads us to 
		\begin{equation}
			\lim_{n\rightarrow\infty}\E\left(\mathbf{L}_1\left(\frac{-t_{1,n}}{\lambda_{1,n}^2}\right)\right)\leq E_C
		\end{equation}
and, by applying Lemma \ref{lema49}, we conclude the equality.
		
		Let $\Wb_1$ be the nonlinear profile associated with $(\mathbf{L}1, (s_n))$, where $s_n = -t{1,n}/\lambda_{1,n}^2$. Given \eqref{ndec3} and the fact that $\E(\mathbf{L}_1(-s_n)) \to 0$, we obtain $J = 1$. Combined with Lemma \ref{qCE}, this implies that $\mathbf{V}_j = 0$ for $j = 2, \dots, J$ and $K(\wb_n) \to 0$. Consequently, \eqref{ndec1} becomes
		$$
		\ub_0^n=\frac{1}{\lambda_{1,n}^{\frac{d-2}{2}}}\mathbf{L}_1\left(s_n,\frac{x}{\lambda_{1,n}}\right)+\wb_n.
		$$
		
		Define $\vb_0^n = \lambda_{1,n}^{\frac{d-2}{2}} \ub_0^n(\lambda_{1,n}x)$ and $\tilde{\wb}_n=\lambda_{1,n}^{\frac{d-2}{2}}\wb_n(\lambda_{1,n}x)$. Thus,
		$$\vb_0^n=\mathbf{L}_1(s_n)+\tilde{\wb}_n,\quad K(\tilde{\wb}_n)\rightarrow 0.$$
Now, let $I_1$ be the maximal-lifespan of $\Wb_1$. Note that in this case, by Definition \ref{non-linear profile} and Lemma \ref{qET}  we have $\lim_{n\rightarrow\infty}K(\Wb_1(s_n))=\lim_{n\rightarrow\infty}K(\ub_0^n)<K(\psib)$ and $\lim_{n\rightarrow\infty}\E(\Wb_1(s_n))=E_C$. If we fix $\tilde{s}\in I_1$, by conservation of the energy $\E(\Wb_1(\tilde{s}))=E_C$. In addition, using Lemma \ref{qET} once again, we have $K(\Wb_1(\tilde{s}))<K(\psib)$. If $\Vert \mathbf{W}_1\Vert_{S(I_1)}<\infty$, then Lemma \ref{standblow} gives us $I_1=\R$, which is a contradiction with Lemma \ref{lema49}. Therefore, we must have $\Vert \Wb_1\Vert_{S(I_1)}=\infty$. So after a translation in time, to make $\tilde{s}=0$, we set $\ub_C=\Wb_1$.  
	\end{proof}
	Next, we prove the precompactness in $\Dz$ of the flow of the critical solution $\ub_C$.
	\begin{proposition}\label{precompactness}
		Assume that $\ub_C$ is as is Proposition \ref{criticalsolution} and $\Vert \ub_C\Vert_{S(I_+)}=\infty$, where $I_+=(0,\infty)\cap I_C$. Then there exist $x(t)\in\R^d$ and $\lambda(t)\in\R^+$ such that
		\begin{equation}\label{K}
			\mathcal{K}=\left\{\vb(t,x)=\frac{1}{\lambda(t)^{\frac{d-2}{2}}}\ub_C\left(t, \frac{x-x(t)}{\lambda(t)}\right); \,\, t\in I_+ \right\}
		\end{equation}
	 is pre-compact in $\Dz$. A similar result holds if $\Vert \ub_C\Vert_{S(I_-)}$, where $I_-=(-\infty,0)\cap I_C$. 
	\end{proposition}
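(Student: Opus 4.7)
The plan is to argue by contradiction via the linear profile decomposition. Suppose $\mathcal{K}$ is not precompact in $\Dz$. Then one can find a sequence $(t_n)\subset I_+$ such that no rescaling of $\ub_C(t_n)$ admits a convergent subsequence in $\Dz$. By energy conservation, Lemma \ref{qET}, and the bound $K(\ub_C(t))<K(\psib)$ coming from Lemma \ref{coercivity1}, the sequence $(\ub_C(t_n))$ is radial and uniformly bounded in $\Dz$. Moreover, since $\Vert \ub_C\Vert_{S(I_+)}=\infty$ and $\ub_C$ does not scatter forward, the small data theory of Corollary \ref{smalldata} forces $\Vert \mathbf{U}(t)\ub_C(t_n)\Vert_{\Sb(\R)}\geq \delta(A)>0$, so Lemma \ref{LPD} applies.

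I would then apply Lemma \ref{LPD} to $(\ub_C(t_n))$, producing radial profiles $\mathbf{V}_j$, parameters $(\lambda_{j,n},t_{j,n})$ satisfying the orthogonality \eqref{ortho}, and a radial remainder $\wb_n$ with small linear Strichartz norm. The core of the proof is to show that only one profile survives and that $\wb_n\to 0$ in $\Dz$. By \eqref{dec2} and Lemma \ref{qET}, for $n$ large each $\mathbf{V}_j$ and $\wb_n$ sit below the kinetic threshold $K(\psib)$. Using \eqref{dec3} together with Lemma \ref{qCE}, which guarantees $\E\geq 0$ on this regime, every summand satisfies $\liminf_{n\to\infty}\E(\mathbf{L}_j(-t_{j,n}/\lambda_{j,n}^2))\leq E_C$ and $\liminf \E(\wb_n)\leq E_C$. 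If two or more profiles were nontrivial, each would have energy strictly less than $E_C$, and Lemma \ref{lema49} (applied to the associated nonlinear profiles, which are global with finite $\Sb$-norm by the definition of $E_C$) combined with the long-time perturbation Lemma \ref{stability} would yield $\Vert \ub_C\Vert_{\Sb(I_+)}<\infty$, a contradiction. Hence $J=1$, $\mathbf{V}_j\equiv 0$ for $j\geq 2$, $K(\wb_n)\to 0$, and $\liminf \E(\mathbf{L}_1(-t_{1,n}/\lambda_{1,n}^2))=E_C$.

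With this reduction, one has the decomposition
\begin{equation*}
\ub_C(t_n,x)=\frac{1}{\lambda_{1,n}^{(d-2)/2}}\mathbf{L}_1\!\left(\frac{-t_{1,n}}{\lambda_{1,n}^2},\frac{x}{\lambda_{1,n}}\right)+\wb_n,\qquad \wb_n\to 0\ \text{in}\ \Dz.
\end{equation*}
Setting $s_n:=-t_{1,n}/\lambda_{1,n}^2$, extract a subsequence with $s_n\to s_*\in[-\infty,\infty]$. If $s_*$ is finite, then $\mathbf{L}_1(s_n)\to \mathbf{L}_1(s_*)$ strongly in $\Dz$, and choosing $\lambda(t_n)=\lambda_{1,n}$ (with $x(t_n)=0$, legitimate in the radial setting) gives a convergent rescaling of $\ub_C(t_n)$. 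The cases $s_*=\pm\infty$ are ruled out: if $s_*=+\infty$, then $\Vert \mathbf{U}(t)\mathbf{L}_1(s_n)\Vert_{\Sb([0,\infty))}\to 0$ by dispersion, so via Lemma \ref{stability} the nonlinear evolution of $\ub_C(t_n)$ would scatter on $[t_n,\infty)$, giving $\Vert \ub_C\Vert_{\Sb(I_+)}<\infty$; the analogous argument handles $s_*=-\infty$. Hence every subsequence of $(\ub_C(t_n))$ admits a sub-subsequence with a convergent rescaling, which contradicts the failure of precompactness and proves the claim. The negative-time statement is proved identically after reversing time.

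The main obstacle, in my view, is organizing the case analysis on $s_n\to s_*$ so that the limits $s_*=\pm\infty$ are excluded cleanly by the stability lemma: one must carefully verify that the hypotheses \eqref{v54}--\eqref{evolution} of Lemma \ref{stability} are met with the approximate solution built from $\mathbf{L}_1$ (for $s_*=\pm\infty$) and with the nonlinear profile $\mathbf{W}_1$ (for finite $s_*$, implicitly already done in Proposition \ref{criticalsolution}). The ruling-out of multiple profiles, while conceptually standard, requires a nonlinear superposition argument that also relies on Lemma \ref{stability} in a uniform way over the orthogonality parameters.
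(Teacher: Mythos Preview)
Your proposal is correct and follows essentially the same contradiction-via-profile-decomposition route as the paper, which in turn is an outline of \cite[Proposition~4.2]{KM}. The only difference is one of detail: the paper simply asserts that $s_n=-t_{1,n}/\lambda_{1,n}^2$ is bounded and refers to \cite{KM}, whereas you spell out the exclusion of $s_*=\pm\infty$ via dispersion and Lemma~\ref{stability}; note that for $s_*=-\infty$ your ``analogous argument'' implicitly uses that the offending sequence must satisfy $t_n\to\sup I_+$ (otherwise continuity of the flow already gives a convergent subsequence), so the backward Strichartz bound on $[0,t_n]$ indeed contradicts $\Vert\ub_C\Vert_{\Sb(I_+)}=\infty$ in the limit.
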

	\begin{proof} As the proof is independent of the nonlinearities, we offer only a brief outline. For details, the reader may refer to \cite[Proposition 4.2]{KM}. Assume that $\mathcal{K}$ is not is pre-compact in $\Dz$. Then, there exists $\eta_0>0$ and a sequence of time $(t_n)\subset(0,\infty)$ such that, for all $\lambda_0\in\R^+$ and $x_0\in\R^d$, we have
		\begin{equation}\label{km415}
			\left\Vert \frac{1}{\lambda_0^{\frac{d-2}{2}}}\ub_C\left(t_n, \frac{x}{\lambda_0}\right)-\ub_C(t_m,x)\right\Vert_{\dot{\mathbf{H}}_x^1}\geq \eta_0,\quad n\neq m.
		\end{equation}
By applying Lemma \ref{LPD} to $\vb_0 = \ub_C(t_n)$ with $\epsilon_0 > 0$, we again find that $J = 1$, following the same argument as in the previous proof. Thus,
		\begin{equation}\label{km416}
			\ub_C(t_n)=\frac{1}{\lambda_{1,n}^{\frac{d-2}{2}}}\mathbf{L}_1\left(\frac{-t_{1,n}}{\lambda_{1,n}},\frac{x}{\lambda_{1,n}^2}\right)+\wb_n,\quad K(\wb_n)\rightarrow 0.		
		\end{equation}
		Now, since $s_n=\frac{-t_{1,n}}{\lambda_{1,n}^2}$ is bounded, up to a subsequence
		\begin{equation}\label{sn}
		s_n\rightarrow t_0\in\R.
		\end{equation}
		Then, by \eqref{km415} and \eqref{km416} and uniqueness of the limit $\eqref{sn}$, we reach a contradiction by choosing suitable $\lambda_0\in\R^+$.
	\end{proof}
		\begin{remark}
		We can assume that the frequency scale function $\lambda(t)$ is bounded from below, i.e., $\lambda(t) \geq A_0 > 0$, as detailed in \cite[Section 5]{KM}. Furthermore, $\lambda(t)$ can be adjusted so that $\inf_{t \in I_+} \lambda(t) \geq 1$ for at least half of the maximal lifespan, following the arguments in \cite[Section 4]{KV}.
	\end{remark}
	
	Next we will prove that the critical solution must be global.
	\begin{proposition}\label{kg53} Suppose $E_C < \E(\psib)$ and let $\ub_C$ be the solution established in Theorem \ref{criticalsolution}. Then, $I_C = \mathbb{R}$.
	\end{proposition}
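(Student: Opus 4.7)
The plan is to argue by contradiction, adopting the Kenig--Merle rigidity strategy. Assume $I_C \ne \R$; by time-reversal symmetry we may suppose $T^+:=\sup I_C<\infty$. Let $\mathcal{K}$ be the precompact set from Proposition~\ref{precompactness}, with frequency scale $\lambda(t)$ and, by radial symmetry, $x(t)\equiv 0$. The first claim is that $\lambda(t)\to\infty$ as $t\to T^+$: otherwise, along some $t_n\to T^+$ the value $\lambda(t_n)$ would stay bounded and precompactness (after a subsequence) would yield $\ub_C(t_n)\to\vb_\infty$ in $\Dz$; the local theory of Proposition~\ref{BCL} applied at $\vb_\infty$ together with the long-time perturbation Lemma~\ref{stability} would then extend $\ub_C$ past $T^+$, contradicting the maximality of $I_C$.

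Next, fix a radial cutoff $\phi\in C_c^\infty(\R^d)$ with $\phi\equiv 1$ on $B(0,1)$ and supported in $B(0,2)$, and set
\[
y_R(t)=\sum_{k=1}^{l}\sigma_k\alpha_k\int \phi(x/R)\,|u_{kC}(t,x)|^2\,dx.
\]
By H\"older and Sobolev combined with the energy trapping $K(\ub_C(t))\lesssim E_C$ from Lemma~\ref{qET}, $y_R(t)$ is well defined and bounded. Using the mass-resonance condition \eqref{RC} together with the identity \eqref{V'}, the nonlinear part of $y_R'(t)$ vanishes, and Cauchy--Schwarz, the annular support of $\nabla\phi(\cdot/R)$, H\"older, and Sobolev embedding yield $|y_R'(t)|\lesssim E_C$ uniformly in $R$ and $t$. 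Writing $\ub_C(t,x)=\lambda(t)^{(d-2)/2}\tilde{\ub}(t,\lambda(t)x)$ with $\tilde{\ub}(t)$ in a precompact subset of $\Dz$, a rescaling argument exploiting $\lambda(t)\to\infty$, the concentration of $\nabla\tilde{\ub}(t)$ near the origin, and the radial Sobolev decay $|\tilde{u}_k(r)|\le C r^{-(d-2)/2}\|\nabla \tilde{u}_k\|_{L^2(|y|>r)}$ will force $y_R(t)\to 0$ as $t\to T^+$. Integrating the derivative bound then gives $y_R(0)\le C T^+$ uniformly in $R$, and monotone convergence as $R\to\infty$ forces $\ub_{0,C}\in\Lb^2$.

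Since $\ub_{0,C}\in\Hb^1$, the $L^2$ mass is conserved by assumption \ref{H4}. Combined with $\int_{|x|<R}|u_{kC}(t)|^2\,dx\to 0$ (obtained from $y_R(t)\to 0$) and a dyadic annular decomposition based on the radial Sobolev estimate above, this forces $M(\ub_{0,C})=0$, so $\ub_{0,C}\equiv 0$, which contradicts $\E(\ub_{0,C})=E_C\ge\tilde\delta>0$ from the small-data theory (Corollary~\ref{smalldata}). The principal technical obstacle is the convergence $y_R(t)\to 0$: since the precompactness holds only in $\Dz$, the rescaled profile need not lie in $L^2$, and the truncated $L^2$ mass must be controlled via a delicate combination of the radial Sobolev decay and the $\Dz$-concentration at the origin, closely mirroring the Kenig--Merle rigidity analysis.
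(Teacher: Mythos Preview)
Your overall strategy coincides with the paper's: assume $T^+<\infty$, show $\lambda(t)\to\infty$ via precompactness and the local theory, introduce a truncated mass $y_R(t)$, bound $|y_R'(t)|$ uniformly in $R$ and $t$, prove $y_R(t)\to 0$ as $t\to T^+$, and contradict mass conservation. The paper bounds $|y_R'|$ via Hardy's inequality rather than your annular H\"older--Sobolev argument, and proves $y_R(t)\to 0$ by splitting the rescaled ball and using H\"older plus precompactness of $\mathcal K$ in $\Lb^{2d/(d-2)}$ rather than the pointwise radial decay, but these are equivalent variants.

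There is, however, a genuine gap in your final step. From $y_R(0)\le CT^+$ you correctly deduce $\ub_{0,C}\in\Lb^2$, but you then try to force $M(\ub_{0,C})=0$ from mass conservation, the local vanishing $\int_{|x|<R}|u_{kC}(t)|^2\to 0$, and ``a dyadic annular decomposition based on the radial Sobolev estimate.'' This does not work: radial Sobolev gives only $\int_{2^jR\le |x|\le 2^{j+1}R}|u|^2\lesssim (2^jR)^2\|\nabla u\|_{L^2(|x|\gtrsim 2^jR)}^2$, and the dyadic sum diverges unless the gradient tail decays faster than $4^{-j}$, which precompactness in $\Dz$ does not provide. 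The limits $R\to\infty$ and $t\to T^+$ need not commute, so mass could in principle escape to infinity. The paper closes this by integrating $y_R'$ from an \emph{arbitrary} $t$ to $T^+$ (not just from $0$), obtaining $y_R(t)\lesssim (T^+-t)$ uniformly in $R$; sending $R\to\infty$ then yields $\|\ub_C(t)\|_{\Lb^2}^2\lesssim (T^+-t)$ for every $t$, and now mass conservation forces $M=0$ directly.
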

	\begin{proof}
		Let $I_C = (-T_C^-, T_C^+)$. We will prove the result for $T_C^+$, with the case for $T_C^-$ following similarly. Assuming $T_C^+ < \infty$, we claim that
		\begin{equation}\label{lambda}
			\lambda(t)\rightarrow\infty,\quad \hbox{as} \quad t\rightarrow T_C^+.
		\end{equation}
		Otherwise, there is a sequence $t_i\uparrow T_C^+$ such that $\lambda(t_i)\rightarrow\lambda_0<\infty$ such that, by the compactness of $\overline{\mathcal{K}}$, the sequence   $\vb_i:=\lambda(t_i)^{-\frac{d-2}{2}}\ub_C\left(t_i,\lambda(t_i)^{-1}x\right)\rightarrow \vb$ in $\Dz$. Now, since $\inf_{t\in [0,T_C^+]}\lambda(t)\geq 1$, and $\lambda_0>0$, we conclude that
		 $\ub_C\left(t_i, x\right)=\lambda(t_i)^{\frac{d-2}{2}}\vb_i(t_i,\lambda(t_i)x)\rightarrow\lambda_0^{\frac{d-2}{2}}\vb(T_C^+,\lambda_0x)$. Let $h(t,x)$ be the solution of \eqref{SISTR} with initial data $\lambda_0^{\frac{d-2}{2}}\vb(T_C^+,\lambda_0x)$, in a interval $(T_C^+-\delta, T_C^++\delta)$, with $\Vert h\Vert_{S((T_C^+-\delta,T_C^++\delta))}<\infty$  and a sequence of solutions $\mathbf{h}_i(t,x)$ with initial data at $T_C^+$ equal to $\ub_C(t_i)$ with
		$$
		\sup_i \Vert \mathbf{h}_i\Vert_{S(T_C^+-\delta/,T_C^++\delta/)}<\infty.
		$$
However, $\ub_C(t+t_i-T_C^+)=\mathbf{h}_i(t)$, which contradicts Lemma \ref{standblow}, since $T_C^+<\infty$. 
		
		Now, fix $\varphi\in C_0^{\infty}$ to be radial, equal $1$ for $|x|\leq 1$ and equal $0$ for $|x|\geq 2$ and set $\varphi_R(x)=\varphi(x/R)$. Define
		\begin{equation*}
			X_R(t)=\sumk \int \varphi_R(x)|\uck|^2dx,\quad t\in[0,T_C^+).
		\end{equation*}
		Thus, applying Hölder's inequality, Hardy's inequality, and \eqref{o51}, we get
		\begin{equation}\label{km58}
			|X'_R|\leq \frac{2}{R}\left| \IM \sumk\int \nabla \varphi_R \nabla\uck\cdot \bar{u}_{ck}dx.\right|\lesssim K(\ub_C)^{\frac{1}{2}}\left(\sumk \int \frac{|\uck|^2}{|x|^2}\right)^{\frac{1}{2}}\lesssim K(\psib).
		\end{equation}
Let $\vb$ be defined as in \eqref{K}. For any $R>0$, using a change of variables yields, for each $k=1,\ldots,l$,
		\begin{equation*}
			\begin{split}
				\int_{|x|<R}|\uck(t,x)|^2dx&=\lambda(t)^{-2}\int_{B(0,\epsilon R\lambda(t)}|v_k(t,z)|^2dz\\
				&\quad+\lambda(t)^{-2}\int_{B(0,R\lambda(t))\backslash B(0,\epsilon R\lambda(t))}|v_k(t,z)|^2dz\\
				&:=A+B	
			\end{split}
		\end{equation*}
Moreover, by Hölder's inequality and Sobolev embedding 
		$$
		A\leq \lambda(t)^{-2}(\epsilon R\lambda(t))^2 \Vert v_k\Vert_{L_x^\frac{2d}{d-2}}^2\lesssim \epsilon^2R^2K(\psib),
		$$
which can be controlled by choosing $\epsilon>0$ small enough. Also, by \eqref{lambda} and the pre-compactness of $\Km$, we have
		\begin{equation*}
			B\leq R^2\Vert v_k\Vert_{L^{\frac{2d}{d-2}}(|x|\geq \epsilon R\lambda(t))}^2\rightarrow 0,\quad \hbox{as }t\rightarrow T_C^+.
		\end{equation*}
		Therefore, for all $R>0$,
		\begin{equation}\label{km59}
			\sumk\int_{|x|<R}|\uck|^2dx\rightarrow 0\quad\hbox{as }t\rightarrow T_C^+,
		\end{equation}	
which implies that $\lim_{t\to T_C^+}X_R(t)=0$.
Therefore, in view of the \eqref{km58}, the Fundamental Theorem of Calculus implies
		$$
		X_R(0)\lesssim  T_C^+K(\psib)\quad \mbox{and}\quad X_R(t)\lesssim (T_C^+-t)K(\psib).
		$$
Therefore, making $R\rightarrow\infty$ we obtain that $\ub_C(t)\in\Lb^2(\R^d)$, for all $t\in[0,T_C^+)$, and $\Vert\ub_C(t)\Vert_{\Lb_2}^2\lesssim (T_C^+-t)K(\psib)$. Finally, by mass conservation, we must have $\ub_C\equiv 0$ which contradicts \eqref{o51}.	
	\end{proof}
	We now finalize the contradiction argument by showing that the solution $\ub_C$ cannot exist. To this end, we note that precompactness of the flow, modulo the invariances of the equation, occurs only for the trivial solution.
	\begin{proposition}\label{inexistence}
		Suppose $E_C < \E(\psib)$ and let $\ub_C$ be the solution established in Theorem \ref{criticalsolution}. Then $\ub_C\equiv 0$. 
	\end{proposition}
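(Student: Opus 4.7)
The plan is to derive a contradiction via a localized virial argument based on the mass-resonance condition \eqref{RC}, the coercivity estimate from Lemma \ref{qCE}(i), the energy-trapping Lemma \ref{qET}, and the precompactness of the flow modulo scaling from Proposition \ref{precompactness} (with $\lambda(t) \geq 1$ arranged by the subsequent remark and $x(t) \equiv 0$ by radiality).

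Fix a radial cutoff $\phi \in C_c^\infty(\mathbb{R}^d)$ with $\phi(x) = |x|^2$ for $|x| \leq 1$ and $\operatorname{supp} \phi \subset \{|x| \leq 2\}$, and for $R > 0$ set $\phi_R(x) := R^2 \phi(x/R)$. Let $V_R(t)$ be defined as in \eqref{Vt} relative to $\ub_C$. Since \eqref{RC} holds, Proposition \ref{Virial} applies directly and gives the momentum-type expression for $V_R'(t)$. Using $|\nabla \phi_R| \lesssim R$, Cauchy--Schwarz, the radial Hardy-type bound $\|u_{Ck}\|_{L^2(|x| \leq 2R)} \lesssim R \, \|\nabla u_{Ck}\|_{L^2}$, and the trapping $K(\ub_C) \leq K(\psib)$ from Lemma \ref{coercivity1}, I would obtain the \emph{a priori} estimate
$$|V_R'(t)| \lesssim R^2\, K(\psib), \qquad t \in \mathbb{R}.$$

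Next I would analyze $V_R''(t)$ using \eqref{virial}. On $\{|x| \leq R\}$ we have $\partial_{x_m x_j}^2 \phi_R = 2\delta_{mj}$, $\Delta \phi_R = 2d$, and $\Delta^2 \phi_R = 0$, so that this region contributes exactly $8\big[K(\ub_C) - \tfrac{2d}{d-2} P(\ub_C)\big]$ once the integrals are completed to all of $\mathbb{R}^d$, at the cost of error terms of the form $\int_{|x|>R}|\nabla u_{Ck}|^2\,dx$, $\int_{|x|>R}|F(\ub_C)|\,dx$, and $R^{-2}\int_{R\leq|x|\leq 2R}|u_{Ck}|^2\,dx$. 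Applying Lemma \ref{qCE}(i) to the main term produces
$$V_R''(t) \geq 8\delta'\, K(\ub_C(t)) - \operatorname{Err}(t,R).$$

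The main obstacle is to make $\operatorname{Err}(t,R)$ small uniformly in $t$. Here I would invoke the precompactness of $\mathcal{K}$: the families $\{|\nabla \ub_C(t)|^2\,dx\}$ and $\{|F(\ub_C(t))|\,dx\}$ (the latter via the Sobolev embedding combined with \eqref{fk23}) are tight on $\mathbb{R}^d$ uniformly on the unbounded set $\{t \geq 0 : \lambda(t) \geq 1\}$, while Hardy's inequality converts the $R^{-2}$ term into another tail integral of $|u_{Ck}/|x||^2$. Hence for any $\epsilon > 0$, choosing $R = R(\epsilon)$ large gives $\operatorname{Err}(t,R) \leq \epsilon\, K(\psib)$ on this set. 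Combined with $K(\ub_C(t)) \gtrsim E_C > 0$ from Lemma \ref{qET}, this yields $V_R''(t) \geq c(E_C) > 0$ on an unbounded set of times. Integrating this lower bound against the uniform estimate $|V_R'(t)| \lesssim R^2$ produces a contradiction as $t \to \infty$, forcing $\ub_C \equiv 0$ in contradiction with \eqref{o512}, and thus completing the proof.
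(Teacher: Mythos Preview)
Your proposal is correct and follows essentially the same route as the paper's proof: a localized virial with the same cutoff, the bound $|V_R'(t)|\lesssim R^2$ via Hardy, the decomposition of $V_R''$ into the main term $8[K(\ub_C)-\tfrac{2d}{d-2}P(\ub_C)]$ plus tail errors controlled by precompactness, and then Lemmas \ref{qCE}--\ref{qET} to obtain a uniform positive lower bound, integrated against the $O(R^2)$ upper bound for the contradiction. The only wording to tighten is the phrase ``on an unbounded set of times'': since you have already arranged $\lambda(t)\geq 1$ for all $t\geq 0$, the lower bound $V_R''(t)\geq c>0$ in fact holds on the full half-line, which is what the integration step requires.
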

	\begin{proof}
We begin by noting that, using Sobolev embedding and the precompactness of $\mathcal{K}$, for each $\epsilon > 0$ there exists $R(\epsilon) > 0$ such that for all $t \geq 0$
		\begin{equation}\label{km510}
			\sumk \int_{|x|>R(\epsilon)}|\nabla \uck|^2+|\uck|^{\frac{2d}{d-2}}\leq\epsilon.
		\end{equation}
		Let $\phi \in C_0^\infty(\mathbb{R}^d)$ be a radial function such that $\phi(x) = |x|^2$ for $|x| \leq 1$ and $\phi(x) = 0$ for $|x| \geq 2$. Define
		$$
		V_R(t)=\sumk\int \frac{\alpha_k^2}{\gamma_k}|\uck|^2R^2\phi\left(\frac{x}{R}\right)dx.
		$$
Then, using \eqref{o51}, Hardy’s inequality, and Hölder’s inequality, we obtain
		\begin{equation}\label{V'2}
			|V'_R(t)|\lesssim 2R  \sumk \alpha_k \IM\int\left|\phi'\left(\frac{x}{R}\right)\bar{u}_{ck}\nabla\uck dx\right|\lesssim R^2K(\ub_C)\lesssim R^2,
		\end{equation}
for all $t \geq 0$ and $R > 0$. Moreover, applying Lemma \ref{lemma22} and noting that for $|x| \leq R$ we have $\partial_i \partial_j \phi(x) = 2 \delta_{ij}$, $\Delta \phi(x) = 2d$, and $\Delta^2 \phi(x) = 0$, we deduce
		\begin{equation}
			\begin{split}
				V_R''(t)&=4\sum_{1\leq j,m\leq d}\RE \int \frac{\partial^2\phi}{\partial x_m\partial x_j}\left[\sumk \gamma_k\partial_{x_j}\bar{u}_{ck}\partial_{x_m}\uck\right]dx\\
				&\quad -\int\Delta^2\phi\left(\sumk \gamma_k |\uck|^2\right)dx
				- \frac{8}{d-2}\int \Delta\phi F(\ub_C)dx+8\sumk \int_{|x|>R}\gamma_k|\nabla \uck|^2dx\\	
				&\quad -8\sumk \int_{|x|>R}\gamma_k|\nabla \uck|^2dx
				+\frac{16d}{d-2}\int_{|x|>R}F(\ub_C)dx-\frac{16d}{d-2}\int_{|x|>R}F(\ub_C)dx\\
				&= 8\left[K(\ub_C)-\frac{2d}{d-2}P(\ub_C)\right]+O\left(\sumk \int_{R\leq|x|\leq 2R(\epsilon)}|\nabla \uck|^2+|\uck|^{\frac{2d}{d-2}} \right).\\
						\end{split}
		\end{equation}
		Observe that by selecting $R = R(\epsilon)$, we can control the last term using \eqref{km510}. Hence, by choosing $\epsilon$ sufficiently small and applying conservation of energy, Lemma \ref{qCE}, and Lemma \ref{qET}, we obtain
		\begin{equation}\label{h41333}
			V_R''(t)\gtrsim K(\ub_C)\gtrsim \E(\ub_{C0}).
		\end{equation}
Integrating in $t$ and using \eqref{V'2} along with the Fundamental Theorem of Calculus, we obtain
		$$  
		t\E(\ub_{C0})\lesssim V_R'(t)-V_R'(0) \lesssim |V_R'(t)-V_R'(0)|\lesssim R^2(\epsilon).
		$$ 
Dividing both sides by $t > 0$ and letting $t \to \infty$, it follows that $\E(\ub_{C0}) = 0$. By conservation of energy, this implies $\ub_C \equiv 0$.
	\end{proof}
	
We are now ready to prove our global well-posedness and scattering result.
	
	\begin{proof}[Proof of Theorem \ref{MAIN}] By Theorem \ref{inexistence}, it follows that $E_C = \mathcal{E}(\psib)$. Let $\ub_0 \in \Dz$ be radial and satisfy \eqref{mainE} and \eqref{main K}, thus the corresponding solution is global with $\sum_{k=1}^l \|u_k\|_{S(\mathbb{R})} < \infty$. For the scattering part, observe that, by applying Lemma \ref{strichartz} and Lemma \ref{consequences}, we obtain that
		$$\left\Vert \int_{t}^{\infty}U_k(t-s)f_k(\ub)ds\right\Vert_{\Dz}\rightarrow 0\quad \hbox{as }t\rightarrow\infty.$$ 
Then, for each $k = 1, \ldots, l$, if we define 
$$
u_k(t)=U_k(t-a)u_k(0)+\int_a^tU_k(t-s)f_k(\ub)ds
$$ 
and 
$$
u_k^+=U_k(-a)u_k(0)+\int_a^\infty U_k(-s)f_k(\ub)ds,
$$
we obtain the desired result.
	\end{proof}

	\section{Blow-up in finite time}\label{secbl}
To begin, we note that the well-posedness theory in $\mathbf{H}^1(\mathbb{R}^d)$ stated in Theorem \ref{LWP2} can be established using an argument similar to that of Theorem \ref{LWP}, with the addition of the norm $\mathbf{X}(I)$ in the definition of the ball for the contraction mapping argument (see \cite[Theorem 2.5]{KM}). Moreover, if the initial data $\ub_0$ is radially symmetric, then the corresponding solution $\ub$, defined on its maximal existence interval $I$, is also radially symmetric.
		
As mentioned in the Introduction, we follow the strategy from \cite{inui} and work with the function $\mR$ defined in \eqref{R}. Furthermore, if we assume $\phi$ is radially symmetric, a simple calculation (see, e.g., \cite[Lemma 2.9]{kavian}) allows us to express $\mR'(t)$ as
	\begin{equation}\label{R'}
		\mR'(t)=4\int\phi''\left(\sumk \gamma_k|\nabla u_k|^2\right)dx-\int\Delta^2\phi\left(\sumk \gamma_k|u_k|^2\right)dx-\frac{8}{d-2}\RE\int\Delta\phi F(\ub)dx.
	\end{equation}	
	Now, we introduce the functional
	\begin{equation*}
		\tau(\ub(t))=K(\ub(t))-\frac{2d}{d-2}P(\ub(t)).
	\end{equation*}
This functional is often referred to as the "Pohozaev" functional due to its close connection with the Pohozaev identities (see \cite[Lemma 4.3]{noguerap}). By employing the energy definitions in \eqref{EQ}, we can rewrite
	\begin{equation}\label{411}
		\tau(\ub(t))=\frac{d}{d-2}E(\ub(t))-\frac{2}{d-2}K(\ub(t))-\frac{d}{d-2}L(\ub(t))
	\end{equation}	
	where $L(\ub)$ is defined in \eqref{KLP}. The following lemma establishes a key property of the Pohozaev functional, namely that it is strictly negative for solutions of \eqref{SISTQ}.
	\begin{lemma}\label{lema delta}
		Under the assumptions of Theorem \ref{blow4}, there exists $\delta>0$ such that
		$$
		\tau(\ub(t))\leq -\delta<0,\quad t\in I.
		$$
	\end{lemma}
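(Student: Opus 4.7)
The plan is to first establish the coercivity bound $K(\ub(t)) > (1+\tilde\delta_2) K(\psib)$ on the maximal interval $I$, and then combine this lower bound with energy conservation and the identity \eqref{411} to show that $\tau(\ub(t))$ stays below a strictly negative constant uniformly in $t$.

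For the coercivity, although Lemma \ref{coercivity1} is stated for \eqref{SISTR}, its proof transfers to \eqref{SISTQ} essentially verbatim. Indeed, since $L(\ub(t))\geq 0$ and $E$ is conserved,
\begin{equation*}
\mathcal{E}(\ub(t)) = K(\ub(t)) - 2P(\ub(t)) \leq E(\ub(t)) = E(\ub_0) < \mathcal{E}(\boldsymbol{\psi}),
\end{equation*}
so that the sharp inequality \eqref{360} yields
\begin{equation*}
K(\ub(t)) \leq E(\ub_0) + 2C_{\mathrm{opt}}\, K(\ub(t))^{\frac{d}{d-2}}.
\end{equation*}
I then apply Lemma \ref{lema comp}(ii) with $G(t)=K(\ub(t))$, $a=E(\ub_0)$, $b=2C_{\mathrm{opt}}$, $q=\frac{d}{d-2}$. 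The optimal constant formula \eqref{optimal} gives $\gamma=(bq)^{-1/(q-1)}=K(\psib)$, and the Pohozaev identity \eqref{36} implies $(1-1/q)\gamma = \tfrac{2}{d}K(\psib)=\mathcal{E}(\psib)$, so the hypothesis of the lemma follows from \eqref{41}. Since \eqref{42} gives $K(\ub_0)>K(\psib)=\gamma$, part (ii) produces $\tilde\delta_2>0$ with $K(\ub(t)) > (1+\tilde\delta_2)K(\psib)$ for every $t\in I$.

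For the bound on $\tau$, I use identity \eqref{411}, energy conservation, and $L\geq 0$ to obtain
\begin{equation*}
\tau(\ub(t)) = \tfrac{d}{d-2}E(\ub_0) - \tfrac{2}{d-2}K(\ub(t)) - \tfrac{d}{d-2}L(\ub(t)) \leq \tfrac{d}{d-2}E(\ub_0) - \tfrac{2}{d-2}K(\ub(t)).
\end{equation*}
Substituting the coercivity bound on $K(\ub(t))$ and the relation $K(\psib)=\tfrac{d}{2}\mathcal{E}(\psib)$ (again from \eqref{36}) gives
\begin{equation*}
\tau(\ub(t)) \leq \tfrac{d}{d-2}\bigl[E(\ub_0) - (1+\tilde\delta_2)\mathcal{E}(\psib)\bigr] =: -\delta,
\end{equation*}
where $\delta = \tfrac{d}{d-2}\bigl[(1+\tilde\delta_2)\mathcal{E}(\psib) - E(\ub_0)\bigr] > 0$, since $E(\ub_0)<\mathcal{E}(\psib)$ and $\tilde\delta_2>0$.

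The main (and mild) obstacle is the mismatch that Lemma \ref{coercivity1} is written for the $\beta_k=0$ system \eqref{SISTR}, whereas \eqref{SISTQ} conserves $E$ rather than $\mathcal{E}$; this is reconciled by exploiting $L\geq 0$ to replace $\mathcal{E}(\ub(t))$ by $E(\ub(t))=E(\ub_0)$ in the key inequality before invoking Lemma \ref{lema comp}, so that no structural modification of the coercivity argument is needed.
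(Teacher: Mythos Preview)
Your proof is correct and follows the same overall strategy as the paper --- coercivity via Lemma~\ref{lema comp} combined with the identity \eqref{411} and energy conservation --- but your execution is more direct. The paper first applies Lemma~\ref{lema comp} in its non-quantitative form (the $\delta=0$ case from the remark after it) to obtain only $K(\ub(t))>K(\psib)$, which yields $\tau(\ub(t))<0$ without a uniform gap; it then runs a separate contradiction argument to upgrade this to $\tau(\ub(t))<-\theta K(\ub(t))$ for some $\theta>0$, and finally takes $\delta=\theta K(\psib)$. You instead invoke Lemma~\ref{lema comp}(ii) with a genuine $\tilde\delta>0$ (available because \eqref{41} is a strict inequality), obtaining $K(\ub(t))>(1+\tilde\delta_2)K(\psib)$ immediately, and then read off the uniform negative bound on $\tau$ in a single algebraic step via \eqref{411}. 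Your route is shorter; the paper's route has the incidental benefit of producing the slightly stronger conclusion $\tau(\ub(t))\leq -\theta K(\ub(t))$ (negativity proportional to the kinetic energy), though only the constant bound is needed for the lemma as stated and for its application in Theorem~\ref{blow4}.
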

	\begin{proof}
		We adopt the approach from \cite[Theorem 1.3]{DUWU}. We begin by observing that \eqref{E} and \eqref{36} imply
		\begin{equation}\label{412}
			K(\psib)=\frac{d}{2}\mathcal{E}(\psib).
		\end{equation}
Moreover, from the sharp inequality \eqref{360} and conservation of the energy, we have
		\begin{equation}\label{413}
			K(\ub)=E(\ub_0)-L(\ub)+2P(\ub)\leq E(\ub_0)+2|P(\ub)|\leq E(\ub_0)+2C_{\rm opt}K(\ub)^{\frac{d}{d-2}}.\\
		\end{equation}
Define $f(r) = E(\ub_0) - r + 2C_{\rm opt} r^{\frac{d}{d-2}}$ for $r > 0$, and let $G(t) = K(\ub(t))$. From \eqref{413}, it follows that
		$$
		f\circ G(t)=E(\ub_0)-K(\ub(t))+2C_{\rm opt}K(\ub(t))^{\frac{d}{d-2}}\geq 0.
		$$  
Applying Lemma \ref{lema comp} with $\delta=0$, $a=E(\ub_0)$, $b=2C_{\rm opt}$, $q=\frac{d}{d-2}$ and $\gamma=\left(\frac{2C_{\rm opt}d}{d-2}\right)^{-\frac{d-2}{2}}$, from \eqref{optimal}, we obtain
		$$
		a<\left(1-\frac{1}{q}\right)\gamma \Leftrightarrow E(\ub_0)<\mathcal{E}(\psib),
		$$
and, from relation \eqref{412}, it follows that
		$$
		G(0)>\gamma \Leftrightarrow K(\ub_0)> \frac{d}{2}\mathcal{E}(\psib)=K(\psib).
		$$
		Hence, considering \eqref{41} and \eqref{42}, we can apply Lemma \ref{lema comp} to obtain
		\begin{equation}\label{414}
			K(\ub(t))>K(\psib),\quad  \quad t\in I.
		\end{equation}
		The assumption \eqref{41}, combined with the conservation of energy and \eqref{412}, yield
		$$
		\frac{d}{2}E(\ub(t))=\frac{d}{2}E(\ub_0)<\frac{d}{2}\mathcal{E}(\psib)=K(\psib)<K(\ub(t)), \quad t\in I
		$$
		and consequently, from \eqref{411},
		\begin{equation*}
			\tau(\ub(t))<0,\quad t\in I.
		\end{equation*}
		
		Now, let us show that there is $\theta>0$, such that
		\begin{equation}\label{416}
			\tau(\ub(t))<-\theta K(\ub(t)),\quad t\in I.
		\end{equation} Indeed, if $E(\ub_0)\leq 0$ the result follows immediately. On the other hand, suppose $E(\ub_0)>0$ and that \eqref{416} does not hold. Then, there exist sequences $(t_m)\subset I$ and $\theta_m\rightarrow 0$ such that
		$$
		\frac{-\theta_m }{d-2} K(\ub(t_m))\leq \tau(\ub(t_m))<0,
		$$
		which implies
		\begin{equation*}
			E(\ub(t_m))=\frac{d-2}{d}\tau(\ub(t_m))+\frac{2}{d}K(\ub(t_m))+L(\ub(t_m))\geq (1-\theta_m)\frac{2}{d}K(\ub(t_m)).\\
		\end{equation*}
		Using the conservation of energy again, \eqref{412} and \eqref{414} imply that
		\begin{equation*}
			E(\ub_0)=E(\ub(t_m))\geq(1-\theta_m)\frac{2}{d}K(\ub(t_m))
			>(1-\theta_m)\frac{2}{d}K(\psib)
			\geq (1-\theta_m)\mathcal{E}(\psib).
		\end{equation*}
Taking the limit as $m\rightarrow\infty$ leads to a contradiction with \eqref{41}. Therefore, the result follows from \eqref{414} and \eqref{416} by setting $\delta=\theta K(\psib)$.
\end{proof}
Another tool to prove Theorem \ref{blow4} is the following straightforward result. Since its proof involves a direct computation, we omit the details. For a similar statement in dimension 4, the reader may refer to \cite[Lemma 4.2]{HP}.
		\begin{lemma}\label{lema chi}
For $x\in\mathbb{R}^d$ and some constant $c>0$, define $\chi(x)\in C^{\infty}(\mathbb{R}^d)$ a radial function satisfying
		\begin{equation*}
			\chi(x)=\left\{\begin{array}{ll}
				|x|^2, & 0\leq |x|\leq 1,\\
				c, & |x|\geq 3.
			\end{array}\right.
		\end{equation*}
		Assume also that $\chi''(x)\leq 2$ and $0\leq \chi'(x)\leq 2r$, for all $r\geq 0$. Let $\chi_R(x)=R^2\chi(x/R)$. Then
		\begin{itemize}
			\item[(i)] If $|x|\leq R$,
			\begin{equation}\label{48}
				\Delta\chi_R(x)=2d\quad\mbox{and}\quad\Delta^2\chi_R(x)=0.
			\end{equation}

			\item[(i)] If $|x|\geq R$,
			\begin{equation}\label{49}
				\Delta\chi_R(x)\leq C\quad\mbox{and}\quad|\Delta^2\chi_R(x)|\leq\frac{C}{R^2},
			\end{equation}
			where $C$ is a constant independent of $R$.
		\end{itemize}
	\end{lemma}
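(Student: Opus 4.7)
The plan is to carry out a direct computation using the chain rule and the radial Laplacian formula, with the only care needed being to track the scaling factors in $R$. Throughout, I will abuse notation slightly and write $\chi(r)$ for the radial profile, so $\chi(x) = \chi(|x|)$ and $\chi', \chi''$ denote derivatives with respect to the radial variable. The starting point is the identity
\begin{equation*}
\Delta h(x) = g''(|x|) + \frac{d-1}{|x|} g'(|x|), \qquad \text{for } h(x) = g(|x|).
\end{equation*}
Applied to $\chi_R(x) = R^2 \chi(|x|/R)$, a chain rule computation with $\rho := |x|/R$ yields $\Delta \chi_R(x) = \chi''(\rho) + \frac{d-1}{\rho}\chi'(\rho)$, so the two factors of $R^{-1}$ coming from the chain rule on each derivative cancel exactly against the prefactor $R^2$.

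For part (i), if $|x| \leq R$ then $\rho \leq 1$, so $\chi(\rho) = \rho^2$ and therefore $\chi_R(x) = |x|^2$ on this region. One then reads off $\Delta \chi_R(x) = 2d$ and $\Delta^2 \chi_R(x) = 0$ directly.

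For part (ii), when $|x| \geq R$ we have $\rho \geq 1$. For $\rho \geq 3$ we have $\chi \equiv c$ and all derivatives vanish; on the annular region $1 \leq \rho \leq 3$ we use the hypotheses $\chi''(\rho) \leq 2$ and $0 \leq \chi'(\rho) \leq 2\rho$ to get
\begin{equation*}
\Delta \chi_R(x) = \chi''(\rho) + \frac{d-1}{\rho}\chi'(\rho) \leq 2 + 2(d-1) = 2d,
\end{equation*}
which provides the uniform bound $\Delta \chi_R(x) \leq C$ on $|x| \geq R$. For the bi-Laplacian, set $F(\rho) := \chi''(\rho) + (d-1)\chi'(\rho)/\rho$ so that $\Delta \chi_R(x) = F(\rho)$. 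Applying the radial Laplacian formula a second time to the function $x \mapsto F(|x|/R)$ (which no longer carries the prefactor $R^2$) produces the scaling factor $R^{-2}$:
\begin{equation*}
\Delta^2 \chi_R(x) = \frac{1}{R^2}\left[F''(\rho) + \frac{d-1}{\rho}F'(\rho)\right].
\end{equation*}
Since $\chi \in C^\infty$ and $\rho \geq 1$ avoids the only potential singularity at the origin, $F$ is smooth on $[1,\infty)$ and vanishes identically for $\rho \geq 3$; hence $F'$ and $F''$ are bounded in absolute value by a constant depending only on $\chi$ and $d$. This gives $|\Delta^2 \chi_R(x)| \leq C/R^2$ as claimed.

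I do not anticipate any real obstacle: this is essentially bookkeeping of chain rule factors, and the key conceptual point is simply that applying $\Delta$ to $\chi_R$ cancels the $R^2$ prefactor (yielding an $O(1)$ bound), while applying $\Delta$ a second time introduces an additional $R^{-2}$ from the scaling of the argument. The uniform bounds for the radial profile derivatives are guaranteed by the compact support of $\chi' ,\chi''$ in $[0,3]$ together with the explicit hypotheses $\chi''\leq 2$ and $\chi'(\rho) \leq 2\rho$.
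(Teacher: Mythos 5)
Your proof is correct and is exactly the ``direct computation'' that the paper alludes to but omits (referring the reader to \cite[Lemma 4.2]{HP} for a similar statement in dimension $4$). The scaling bookkeeping is right: $\Delta\chi_R(x)=(\Delta\chi)(x/R)$, so the $R^2$ prefactor cancels both chain-rule factors of $R^{-1}$, giving an $O(1)$ quantity, while the second application of $\Delta$ to $x\mapsto F(|x|/R)$ (now without any prefactor) introduces the extra factor $R^{-2}$. The region $|x|\leq R$ is handled exactly, the region $|x|\geq 3R$ is trivial, and on the intermediate annulus $1\leq\rho\leq 3$ the upper bound $\Delta\chi_R\leq 2d$ follows from $\chi''\leq 2$ and $0\leq\chi'(\rho)\leq 2\rho$, while the two-sided bound on $\Delta^2\chi_R$ uses only the smoothness of $\chi$ on the compact set $[1,3]$ (so $F',F''$ are bounded there and vanish for $\rho\geq 3$). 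No gaps.
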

	
Finally, we prove our blow-up result.
	\begin{proof}[Proof of Theorem \ref{blow4}]
		Let $I = (T^-, T^+)$ be the maximal lifespan interval of $\ub$. We will prove only that $T^+ < \infty$, since the case $T^- > -\infty$ can be handled in a similar manner. By choosing $\phi(x) = \chi_R(x)$ as defined in Lemma \ref{lema chi} and using \eqref{R} and \eqref{R'}, we obtain
		$$
		\mathcal{R}(t)=2\sumk \IM\int\alpha_k \nabla\chi_R\cdot\nabla u_k \bar{u}_kdx
		$$
		and
		\begin{equation*}
			\begin{split}
				\mathcal{R}'(t)&=8\tau(\ub)+4\int(\chi_R''-2)\left(\sumk \gamma_k |\nabla u_k|^2\right)dx-\int\Delta^2\chi_R\left(\sumk \gamma_k|u_k|^2\right)dx\\
				&\quad-\frac{8}{d-2}\hbox{Re}\int(\Delta\chi_R-2d)F(\ub)dx\\
				&=:8\tau(\ub)+\mathcal{R}_1(t)+\mathcal{R}_2(t)+\mathcal{R}_3(t).
			\end{split}
		\end{equation*}
By construction, since $\chi_R'' \leq 2$ for all $r \geq 0$, it follows that $\mathcal{R}_1 \leq 0$. Then, applying mass conservation together with \eqref{48}–\eqref{49}, we obtain
		$$
		\mathcal{R}_2(t)\leq\int|\Delta^2\chi_R|\left(\sumk \gamma_k|u_k|^2\right)dx\leq CR^{-2}\int_{\{|x|\geq R\}}\left(\sumk \gamma_k|u_k|^2\right)dx\leq CR^{-2}Q(\ub_0).
		$$
		Moreover, from  \eqref{48} and Lemma \ref{lemma22}, we have
		\begin{equation*}
			\begin{split}
				\mathcal{R}_3&=-\frac{8}{d-2}\hbox{Re}\int_{\{|x|\geq R\}}(\Delta\chi_R-2d)F(\ub)dx\\
				&\leq C\left(\frac{1}{d-2}\right)\int_{\{|x|\geq R\}}|\mathrm{Re} F(\ub)|dx\\
				&\leq C\left(\frac{1}{d-2}\right)\int_{\{|x|\geq R\}}\sumk |u_k|^{\frac{2d}{d-2}}  dx\\
				&=C\left(\frac{1}{d-2}\right)\sumk \Vert u_k\Vert_{L^{\frac{2d}{d-2}}(|x|\geq R)}^{\frac{2d}{d-2}}.
			\end{split}
		\end{equation*}
Next we recall the radial the radial Gagliardo-Nirenberg inequality (see, for instance, \cite[page 323]{ogawa})
		$$
		\int_{\{|x|\geq R\}}|f|^{\frac{2d}{d-2}}\leq CR^{-\frac{2(d-1)}{d-2}}\Vert f\Vert^{\frac{2(d-1)}{d-2}}_{L^2(|x|\geq R)}\Vert\nabla f\Vert_{L^2(|x|\geq R)}^{\frac{2}{d-2}}.
		$$
		Then, for $d>3$ and $\epsilon>0$, by Young's inequality, we obtain
		\begin{equation*}
			\begin{split}
				\mathcal{R}_3&\leq C\left(\frac{1}{d-2}\right)\sumk R^{-\frac{2(d-1)}{d-2}}\Vert u_k\Vert^{\frac{2(d-1)}{d-2}}_{L^2(|x|\geq R)}\Vert\nabla u_k\Vert_{L^2(|x|\geq R)}^{\frac{2}{d-2}}\\
				&\leq C_\epsilon R^{-\frac{2(d-1)}{d-3}}\left(\frac{1}{d-2}\right)M(\ub_0)^{\frac{2(d-1)}{d-3}}+\left(\frac{2\epsilon}{d-2}\right) K(\ub),\\
			\end{split}
		\end{equation*}
		where $C_\epsilon$ is a constant depending on $\epsilon$, $\alpha_k$, $\gamma_k$ and $\sigma_k$. Also note, from \eqref{411}, that
		\begin{equation*}
			\left(\frac{2\epsilon}{d-2}\right) K(\ub)\leq \frac{d}{d-2}\epsilon E(\ub_0)-{\epsilon}\tau(\ub).
		\end{equation*}
		Putting together the above estimates, we obtain
		\begin{equation}\label{417}
			\mathcal{R}'(t)\leq \left(8-\epsilon\right)\tau(\ub)+CR^{-2}M(\ub_0)+C_\epsilon R^{-\frac{2(d-1)}{d-3}}\frac{1}{d-2}M(\ub_0)^{\frac{2(d-1)}{d-3}}+\frac{d}{d-2}\epsilon |E(\ub_0)|.
		\end{equation}
		Therefore, for $\epsilon\in(0,1)$, Lemma \ref{lema delta} yields
		\begin{equation*}
			\mathcal{R}'(t)\leq -\left(8-\epsilon\right)\delta+CR^{-2}M(\ub_0)+C_\epsilon R^{-\frac{2(d-1)}{d-3}}M(\ub_0)^{\frac{2(d-1)}{d-3}}+\frac{d}{d-2}\epsilon |E(\ub_0)|.
		\end{equation*}
		Therefore, by fixing $R$ large enough and then $\epsilon$ small enough, we obtain $\mathcal{R}'(t)\leq -2\delta$. Integrating this inequality over $[0,t)$ leads to
		\begin{equation}\label{418}
			\mathcal{R}(t)\leq -2\delta t+\mathcal{R}(0).
		\end{equation}
For $d=3$, we have the estimate 
$$
\mathcal{R}_3\leq C R^{-4}M(\ub_0)^2K(\ub),
$$ 
which, by following the same argument, yields
	$$
	\mathcal{R}'(t)\leq -\left(8-{CR^{-4}M(\ub_0)^2}\right)\delta+CR^{-2}M(\ub_0)+CR^{-4}M(\ub_0)|E(\ub_0)|,
	$$
and this also implies \eqref{418} provided that $R$ is chosen sufficiently large.
	
			On the other hand, by Hölder's inequality,
		\begin{equation}\label{419}
			\begin{split}
				|\mathcal{R}(t)| &\leq 2R\sumk \alpha_k\int |\chi'(|x|/R)||\nabla u_k||u_k| dx\\
				&\leq CR\sumk \alpha_k(\Vert u_k\Vert_{L^2}\Vert \nabla u_k\Vert_{L^2})\\
				&\leq CR M(\ub_0)^{1/2}K(\ub(t))^{1/2}.\\
			\end{split}
		\end{equation}
		By combining \eqref{418} with the fact that $\mathcal{R}(0)/\delta < T_0$ can be ensured by taking $T_0$ sufficiently large, we conclude that
		\begin{equation}\label{420}
			\mathcal{R}(t)\leq-\delta t<0,\quad t\geq T_0.
		\end{equation}
		Thus, by applying \eqref{419} and \eqref{420}, we deduce that 
$$
\delta t\leq-\mathcal{R}(t)=|\mathcal{R}(t)|\leq CRM(\ub_0)^{1/2}K(\ub)^{1/2},
$$ 
which implies that, for some positive constant $C_0$,
		\begin{equation}\label{421}
			K(\ub(t))\geq C_0t^2,\quad t\geq T_0.
		\end{equation}
		Now, since $\epsilon$ can be chosen arbitrarily small, it follows from \eqref{417} and \eqref{411} that
		\begin{equation}\label{422}
			\mathcal{R}'(t)\leq\left(\frac{8d}{d-2}\right)E(\ub_0)- \left(\frac{15}{d-2}\right)K(\ub)+ CR^{-2}M(\ub_0)+ C_\epsilon R^{-\frac{2(d-1)}{d-3}}M(\ub_0)^{\frac{2(d-1)}{d-3}},
		\end{equation}
where we have once again used the conservation of energy and the fact that $L(\ub) \geq 0$. Now, we can choose $T_1 > T_0$ such that
		$$
		C_0\left(\frac{15}{2(d-2)}\right)T_1^2\geq\left(\frac{8d}{d-2}\right)E(\ub_0)+CR^{-2}M(\ub_0)+C_\epsilon R^{-\frac{2(d-1)}{d-3}}M(\ub_0)^{\frac{2(d-1)}{d-3}}.
		$$
		From \eqref{421} and \eqref{422}, we obtain
		$$
		\mathcal{R}'(t)\leq -\left(\frac{15}{2(d-2)}\right)K(\ub(t)), \quad t>T_1.
		$$
		Integrating over $[T_1, t)$, this yields
		$$
		\mathcal{R}(t)\leq -\left(\frac{15}{2(d-2)}\right)\int_{T_1}^tK(\ub(s))ds.
		$$
		Combining with \eqref{419}, we derive
		\begin{equation}\label{423}
			\left(\frac{15}{2(d-2)}\right)\int_{T_1}^tK(\ub(s))ds\leq -\mathcal{R}(t)\leq |\mathcal{R}(t)|\leq CRM(\ub_0)^{1/2}K(\ub(t))^{1/2}.
		\end{equation}
Now set $\eta(t):=\displaystyle\int_{T_1}^tK(\ub(s))ds$ and $A:=\frac{15^2}{4(d-2)^2C^2R^2M(\ub_0)} $. The previous inequality implies
		$$
		A\leq\frac{\eta'(t)}{\eta(t)^2},
		$$
Choosing $T' > T_1$ and integrating over $[T', t)$, we obtain
		$$
		A(t-T')\leq \int_{T'}^t\frac{\eta'(s)}{\eta(s)^2}ds=\frac{1}{\eta(T')}-\frac{1}{\eta(t)}\leq \frac{1}{\eta(T')},
		$$
which implies
		$$
		0<\eta(T')\leq \frac{1}{A(t-T')}.
		$$
If $T^+ = \infty$, letting $t \to \infty$ leads to a contradiction.
	\end{proof}
	
	\subsection*{Acknowledgment}
L.G.F. was partially supported by Coordenação de Aperfeiçoamento de Pessoal de
Nível Superior - CAPES, Conselho Nacional de Desenvolvimento Científico e Tecnológico - CNPq
and Fundação de Amparo a Pesquisa do Estado de Minas Gerais - FAPEMIG.	M. H. is supported by Conselho Nacional de Desenvolvimento Científico e Tecnológico - CNPq.

\end{document}